\documentclass[12pt,letterpaper]{amsart}
\usepackage{graphicx}
\usepackage{amsmath}
\usepackage{amssymb}
\usepackage{amsfonts}
\usepackage{amsthm}
\usepackage{cancel}
\usepackage[all]{xy}
\usepackage[neveradjust]{paralist}
\usepackage{enumitem}
\usepackage{multicol}
\usepackage{mathrsfs}
\usepackage{mathdots}
\usepackage[pagebackref,colorlinks=true,linkcolor=blue,citecolor=blue]{hyperref}

\usepackage{color}

\setlength{\hoffset}{-0.35in} \addtolength{\textwidth}{0.70in}
\setlength{\voffset}{-0.50in} \addtolength{\textheight}{1.00in}

\newcommand{\N}{\mathbb{N}}

\newcommand{\C}{\mathbb{C}}

\newcommand{\B}{{\mathcal B}}

\newcommand{\inv}{^{-1}}

\newcommand{\GL}{\mathrm{GL}}
\newcommand{\SO}{\mathrm{SO}}

\newcommand{\comment}[1]{}

\newtheorem{thm}{Theorem}[section]
\newtheorem{cor}[thm]{Corollary}
\newtheorem{lemma}[thm]{Lemma}
\newtheorem{prop}[thm]{Proposition}
\newtheorem {conj}[thm]{Conjecture}

\newtheorem {ques/conj}[thm]{Question/Conjecture}

\numberwithin{equation}{section}

\begin{document}

\title[]{A converse theorem for quasi-split even special orthogonal groups over finite fields}

\author{Alexander Hazeltine}
\address{Department of Mathematics\\
University of Michigan\\
Ann Arbor, MI, 48109, USA}
\email{ahazelti@umich.edu}

\subjclass[2020]{Primary 20C33, 20G40}

\dedicatory{}

\keywords{Converse Theorems, Generic Cuspidal Representations, Quasi-Split Even Special Orthogonal Groups.}

\begin{abstract}
    We prove a converse theorem for the case of quasi-split non-split even special orthogonal groups over finite fields. There are two main difficulties which arise from the outer automorphism and non-split part of the torus. The outer automorphism is handled similarly to the split case, while new ideas are developed to overcome the non-split part of the torus.
\end{abstract}

\maketitle

\tableofcontents

\section{Introduction}

Let $\mathrm{G}$ be a connected reductive group, $F$ be a local or finite field, and $G:=\mathrm{G}(F).$ Converse theorems aim to classify certain representations of $G$ in terms of their invariants. One such invariant is the $\gamma$-factor. These invariants are arithmetic in nature and are a vital part of the Langlands program. 

For the moment, let $F$ be a non-Archimedean local field of characteristic 0, $\pi$ be an irreducible generic representation of $\GL_l(F)$, $\tau$ be an irreducible generic representation of $\GL_n(F)$, and $\psi$ be an additive non-trivial character of $F.$ Jacquet, Piatetskii--Shapiro, and Shalika defined the local twisted Rankin-Selberg $\gamma$-factors, denoted $\gamma(s, \pi \times \tau, \psi)$ where $s\in\mathbb{C}$, using  Rankin--Selberg convolutions \cite{JPSS83} (equivalently, these can be defined by the Langlands--Shahidi method \cite{Sha84, Sha90}). Jacquet conjectured the following local converse theorem for $\GL_l(F)$.

\begin{conj}\label{lcp}
Let $\pi_1$ and $\pi_2$ be irreducible generic representations
of $\GL_l(F)$ with the same central character.
If
\[
\gamma(s, \pi_1 \times \tau, \psi) = \gamma(s, \pi_2 \times \tau, \psi),
\]
as functions of the complex variable $s$, for all irreducible
generic representations $\tau$ of $\GL_n(F)$ with $1 \leq n \leq 
[\frac{l}{2}]$, then $\pi_1 \cong \pi_2$.
\end{conj}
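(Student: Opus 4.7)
The plan is to compare $\pi_1$ and $\pi_2$ through their Whittaker models and reduce $\pi_1 \cong \pi_2$ to an equality of normalized Bessel functions. Fix a non-degenerate character $\psi_U$ of the upper triangular unipotent subgroup $U$ of $\GL_l(F)$. Since $\pi_1$ and $\pi_2$ are irreducible and generic, each has a unique Whittaker model $\mathcal{W}(\pi_i,\psi_U)$, and $\pi_1 \cong \pi_2$ is equivalent to the associated normalized Bessel functions $\mathcal{B}_{\pi_1}$ and $\mathcal{B}_{\pi_2}$ coinciding on a suitable open dense subset of $\GL_l(F)$.

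I would first translate the $\gamma$-factor hypothesis into analytic identities on Whittaker functions via the local Rankin--Selberg functional equation: for $W_i \in \mathcal{W}(\pi_i,\psi_U)$ and $W' \in \mathcal{W}(\tau,\psi_U^{-1})$ with $\tau$ an irreducible generic representation of $\GL_n(F)$, the local integrals $\Psi(s,W_i,W')$ satisfy
\[
\widetilde{\Psi}(1-s,\widetilde{W}_i,\widetilde{W}') \;=\; \gamma(s,\pi_i \times \tau,\psi)\,\Psi(s,W_i,W').
\]
The hypothesis thus becomes the statement that two bilinear forms attached to $\pi_1$ and $\pi_2$ agree against all generic $\tau$ of $\GL_n(F)$ with $n \le [l/2]$. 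Unfolding these integrals along the Bruhat decomposition produces identities among partial sums of Whittaker function values indexed by elements of the Weyl group of $\GL_l$.

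The heart of the argument, following the approach of Baruch, Chai, and Jacquet--Liu, is an induction on the Bruhat order. For each Weyl element $w$ one forms a \emph{partial Bessel function} $\mathcal{B}_{\pi,w}$ by integrating a suitably chosen Whittaker function against $\psi_U$ over the unipotent subgroup determined by $w$. Descending in Bruhat order from the longest Weyl element, the functional equation above, applied to carefully chosen test data, forces $\mathcal{B}_{\pi_1,w}=\mathcal{B}_{\pi_2,w}$ cell by cell, modulo contributions from smaller cells already matched. The restriction to $n \le [l/2]$ is sharp in the sense that Weyl elements requiring larger twists to be isolated already sit below Weyl elements of this size in the Bruhat order.

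The main obstacle is the step dealing with the \emph{relevant} Weyl elements whose associated Levi contains a block of size exactly $[l/2]$; for these one cannot enlarge the twisting dimension to isolate the cell, so the inductive step requires a separate support and stability analysis of partial Bessel functions showing that $\mathcal{B}_{\pi_i,w}$ on this cell is completely determined by the previously matched data. Once the induction terminates, $\mathcal{B}_{\pi_1}=\mathcal{B}_{\pi_2}$ on the big Bruhat cell, and a standard density argument combined with the assumed equality of central characters yields $\pi_1 \cong \pi_2$.
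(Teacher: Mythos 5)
This statement is not proved in the paper: it is Jacquet's local converse conjecture for $\GL_l$ over a $p$-adic field, recalled for context in the introduction and attributed to Chai \cite{Cha19} and Jacquet--Liu \cite{JL18a}. There is no in-paper proof to compare against, only the cited external arguments; note also that the paper itself concerns $\SO_{2l}$ over a \emph{finite} field, so the analytic apparatus you invoke (meromorphic continuation in $s$, convergence of local integrals, partial Bessel functions as cut-off limits) simply does not appear anywhere in this text.

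Your sketch does align with the overall architecture of the partial-Bessel-function approach, which is closer to Chai's proof than to Jacquet--Liu's (the latter proceeds through a Fourier-analytic study of orbital and Kloosterman-type integrals rather than Bruhat-cell induction on partial Bessel functions). The reduction of $\pi_1 \cong \pi_2$ to equality of Bessel data, the translation of the $\gamma$-factor hypothesis via the Rankin--Selberg functional equation, and the descent along the Bruhat order all genuinely occur in Chai's argument. But the sketch is a strategy outline rather than a proof, and it collapses exactly at the point where the theorem is hard. Two concrete gaps: (i) you assert that twists by $\GL_n$ for $n \le [l/2]$ suffice to isolate every relevant Bruhat cell, with the larger cells ``already below in the Bruhat order,'' but the reason the bound $[l/2]$ works --- rather than $l-1$, which is what the classical Cogdell--Piatetski-Shapiro/Henniart-style argument yields --- is precisely the substance of the theorem and is left unargued; (ii) over a $p$-adic field the partial Bessel functions $\mathcal{B}_{\pi,w}$ are not literal functions but families depending on a compact cut-off, and the passage from ``the functional equation forces agreement for all cut-offs'' to ``$\mathcal{B}_{\pi_1} = \mathcal{B}_{\pi_2}$ on the big cell, hence $\pi_1 \cong \pi_2$'' rests on a stability and germ-expansion analysis of partial Bessel functions that your phrase ``a separate support and stability analysis'' merely names without supplying. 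As a map of where a proof must go, the proposal is reasonable; as a proof, it is missing the two lemmas on which the conclusion actually turns.
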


Conjecture \ref{lcp} was proved independently by Chai (\cite{Cha19}) and Jacquet and Liu (\cite{JL18a}) using different analytic methods. 

For general quasi-split reductive groups over non-Archimedean local fields of characteristic 0, Jiang gave a general conjecture for local converse theorems for generic representations in \cite[Conjecture 3.7]{Jia06} and many cases have since been established. A non-exhaustive list of results includes  $\SO_{2l+1}$ (\cite{JS03, Jo22}), $\mathrm{Sp}_{2l}$ (\cite{Jo22, Zha18}), $\mathrm{U}(l,l)$ (\cite{Mor18, Zha18}), $\mathrm{U}_{2l+1}$ (\cite{Zha19}), split $\SO_{2l}$ (\cite{HL22b}), and $\widetilde{\mathrm{Sp}}_{2l}$ (\cite{Haa22}). For quasi-split non-split $\SO_{2l}$, the local converse theorem has also been established by the author (\cite{Haz23b}) by generalizing the results in this paper to the local case. The arguments presented here in the finite fields case serve as a guide to the local case. However, we remark that there are significant differences between the finite and local cases (see \cite[\S1]{Haz23b} for a discussion of these difficulties).

The case for split and quasi-split $\SO_{2l}$ was also proved in \cite{HKK23} using the theta correspondence (similarly to \cite{Haa22} but using more refined arguments). We remark that there are significant differences in the approaches of \cite{Haz23b, HL22b} and \cite{HKK23}. In particular, similar to the case studied in this paper, the work of \cite{Haz23b,HL22b} utilizes the properties of partial Bessel functions to compute the zeta integrals directly. Also, by applying Langlands functoriality and Conjecture \ref{lcp}, several of the above cases would be implied by the works of Arthur and Mok (\cite{Art13, Mok15}). However, it is desirable to have direct proofs which reflect the intrinsic properties of the groups as is carried out in \cite{Haz23b, HL22b, Jo22, Zha18, Zha19}.

Over finite fields of odd characteristic, the converse theorem for generic cuspidal representations has an analogous form to the local case. Hereinafter, we let $F$ be a finite field of odd characteristic. For $\GL_l(F)$, Nien established the converse theorem for generic cuspidal representations (\cite{Nie14}) by using normalized Bessel functions and the twisted gamma factors defined by Roditty (\cite{Rod10}). In \cite{LZ22a}, Liu and Zhang defined the twisted gamma factors for generic cuspidal representations of $\mathrm{Sp}_{2l}(F)$, $\SO_{2l+1}(F)$, $\mathrm{U}_{2l}(F)$, and $\mathrm{U}_{2l+1}(F)$ and proved the corresponding converse theorems. Liu and Zhang also established the converse theorem for $\mathrm{G}_2(F)$ (\cite{LZ22b}). Building upon the ideas of \cite{LZ22a}, Liu and the author proved the converse theorem for split $\SO_{2l}(F)$ in \cite{HL22a}.

The remaining case for classical groups over finite fields is that of quasi-split non-split $\SO_{2l}(F).$ Hereinafter, by ``quasi-split,'' we mean quasi-split non-split as the analogous results for split case are contained in \cite{HL22b}. There are two key difficulties in the quasi-split case. The first is the existence of the outer automorphism. The second is presented by the maximal torus being non-split. In this paper, we develop new ideas in order to overcome these obstacles and prove the converse theorem. That is, we define the twisted gamma factors $\gamma(\pi\times \tau, \psi)$ (see Proposition \ref{gammafactor}) for an irreducible generic cuspidal representation $\pi$ of $\SO_{2l}(F)$ and an irreducible generic representation $\tau$ of $\GL_{n}(F)$, 
and prove the following theorem. 

\begin{thm}[The Converse Theorem for quasi-split $\SO_{2l}$, Theorem \ref{converse thm}]\label{converse thm intro}
Let $\pi$ and $\pi^\prime$ be irreducible cuspidal $\psi$-generic representations of the quasi-split group $\SO_{2l}(F)$ with the same central character. If $$\gamma(\pi\times\tau,\psi)=\gamma(\pi^\prime\times\tau,\psi),$$ 
for all irreducible generic representations $\tau$ of $\GL_n(F)$ with $n\leq l,$
then $\pi\cong\pi'$ or $\pi\cong \pi'{}^c,$
 where $c$ is the outer automorphism. 
\end{thm}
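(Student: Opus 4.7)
The plan is to adapt the normalized Bessel function strategy that has proved effective for finite field converse theorems, as used by Nien for $\GL_l$ \cite{Nie14}, Liu--Zhang for $\mathrm{Sp}_{2l}$, $\SO_{2l+1}$, $\mathrm{U}_{2l}$, $\mathrm{U}_{2l+1}$ \cite{LZ22a}, and Hazeltine--Liu for split $\SO_{2l}$ \cite{HL22a}. The idea is that $\pi$ is determined by its (normalized) Bessel function $B_\pi$, so it suffices to recover enough values of $B_\pi$ from the hypothesis. The structure of the Bessel function for quasi-split $\SO_{2l}$ together with the two new features of this case, the outer automorphism $c$ and the non-split torus, will dictate that we only recover $B_\pi$ up to the $c$-action, producing the dichotomy $\pi \cong \pi'$ or $\pi \cong \pi'{}^c$ in the conclusion.

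The first step is to unfold the zeta integral defining $\gamma(\pi\times\tau,\psi)$ and rewrite the functional equation as an identity relating sums involving $B_\pi$ against the Bessel function $B_\tau$ of $\tau$ on $\GL_n(F)$. Then, letting $\tau$ vary over all irreducible generic representations of $\GL_n(F)$ for $n\leq l$ and invoking the linear independence of the Bessel functions $B_\tau$ on the diagonal torus of $\GL_n$ (Fourier inversion over the finite torus, as in \cite{HL22a,LZ22a}), the equality of gamma factors translates into the pointwise equality $B_\pi(g)=B_{\pi'}(g)$ for $g$ ranging over a certain set $S\subset \SO_{2l}(F)$, typically a union of double cosets $U w T_0 U$ where $U$ is the unipotent radical of the Borel, $T_0$ is the split part of the torus, and $w$ runs through Weyl elements in the image of the Siegel Levi.

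The main obstacle is extending the equality $B_\pi=B_{\pi'}$ from $S$ to the full Bessel support of $\pi$. Two sources of missing elements must be reached: Weyl elements arising from the non-split part of the torus (which never appear in the Siegel-parabolic-based zeta integral) and torus elements in $T\setminus T_0$. For the outer automorphism, the observation that $B_{\pi^c}$ and $B_\pi$ necessarily coincide on a $c$-stable subset forces the $c$-twist into the conclusion and is handled exactly as in \cite{HL22a}, with the support of the relevant Weyl element orbit being $c$-symmetric. For the non-split torus, the plan is to exploit identities among partial Bessel function values obtained from Bruhat-style decompositions adapted to the quasi-split form, together with the cuspidality and genericity of $\pi$, in order to express $B_\pi$ on elements involving the non-split torus in terms of its values on $S$. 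Concretely, I would expect to use uniqueness of the Whittaker model, together with carefully chosen Weyl element conjugations, to produce linear relations that propagate the equality from $S$ to the remaining elements.

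Once $B_\pi$ and $B_{\pi'}$ are shown to agree on a set that generates (up to the action of $c$) the full Bessel support, the conclusion $\pi \cong \pi'$ or $\pi\cong\pi'{}^c$ follows from the fact that a generic cuspidal representation over a finite field is determined by the restriction of its Bessel function to its support. The central character hypothesis is used to pin down $B_\pi$ on the center and to rule out twists by characters, as in \cite{HL22a,LZ22a}. The bound $n\leq l$ matches the ``half-rank'' bound familiar from the split case and is exactly what is required for the Fourier inversion over $\GL_n$ tori to isolate the relevant coefficients of $B_\pi$ without oversight.
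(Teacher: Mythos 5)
Your outline captures the broad architecture (zeta integrals, normalized Bessel functions, linear independence to isolate values on Bruhat cells, uniqueness of Whittaker models at the end), but the two novel ingredients of this case are misdescribed in a way that would leave the proof incomplete.

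First, your treatment of the outer automorphism does not match how it actually enters here. You write that it is ``handled exactly as in \cite{HL22a}, with the support of the relevant Weyl element orbit being $c$-symmetric.'' In the split case, $c$ permutes the two end simple roots and hence acts nontrivially on Weyl elements, which is why the Bessel support decomposes into $c$-symmetric pieces of Weyl cells. In the quasi-split case the Dynkin diagram is folded, so $c$ fixes \emph{every} simple root and every Weyl element; $c$ acts nontrivially only on the non-split part of the torus ($t\mapsto ctc$ flips the off-diagonal entry $b$). The $c$-twist therefore does not arise from Weyl-element symmetry at all. It arises because in the $\GL_l$-twisted zeta integral the map $tw\mapsto A$ sending a torus-times-Weyl element to the $\GL_l$-coefficient of $\tilde f_v$ is two-to-one, with $t$ and $ctc$ having the same image. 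The linear independence argument then only isolates $\B_{\pi}(tw)+\B_{\pi}(ctcw)=\B_{\pi}(tw)+\B_{\pi^c}(tw)$, forcing the sum $\B_\pi + \B_{\pi^c}$ into the conclusion. That two-to-one collapse is the key mechanism and is genuinely new; without identifying it your argument has no way to produce the $c$-ambiguity.

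Second, your proposal for the non-split torus is too vague to work. You suggest ``expressing $B_\pi$ on elements involving the non-split torus in terms of its values on $S$'' via unspecified linear relations from Bruhat decompositions. That is not what happens. What the paper actually uses is a dichotomy indexed by the Bessel cell $\mathrm{B}_n(\SO_{2l})$. For $n< l-1$, conjugation by the root subgroup element $\mathbf{x}_{\alpha_{l-1}}(x)$ shows directly that the Bessel function \emph{vanishes} on $tw$ unless the non-split part of $t$ is trivial (i.e., $a=\pm1$, $b=0$); there is nothing to propagate, the value is $0$. For the cells $\mathrm{B}_{l-1}(\SO_{2l})$, the $\GL_{l-1}$ twist recovers only the split torus ($a=1$, $b=0$), and the $\GL_l$ twist is precisely the instrument that accesses the non-split part $a\neq 1$: the preimage of the big cell $Q_l w_l V_l$ under the embedding $\SO_{2l}\hookrightarrow\SO_{2l+1}$ consists exactly of $tw$ with $w\in\mathrm{B}_{l-1}(\SO_{2l})$ and $a\neq 1$. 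So the $\GL_l$ twist is not a redundant upper bound on $n$ — it is the only twist that sees the non-split torus, and your plan does not make this connection. (As a minor point, $n\leq l$ is the full rank of $\SO_{2l}$, not a ``half-rank'' bound; the rank-$l$ twist is needed precisely for the reason above.)

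In short, the structure you propose is reasonable, but you would need the following concrete pieces that are absent from the sketch: the vanishing lemma on the non-split torus for low cells (via conjugation by elements of $U_{\alpha_{l-1}}$), the identification of the preimage of the Siegel big cell of $\SO_{2l+1}$ as $\{tw : w\in\mathrm{B}_{l-1},\ a\neq 1\}$, and the two-to-one collapse of $tw\mapsto A$ under $t\mapsto ctc$ that produces the symmetrized statement about $\B_\pi+\B_{\pi^c}$.
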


Theorem \ref{converse thm intro} and Corollaries \ref{conj n gamma} and \ref{conj l gamma} imply that twisted gamma factors  are unable to distinguish irreducible generic cuspidal representations $\pi$ and $\pi^c$ of $\SO_{2l}(F)$, which is a unique phenomenon for $\SO_{2l}$ among all the classical groups. This was also viewed in the split case in \cite{HL22a}. Over non-Archimedean fields, this is consistent with Arthur's work on the local Langlands correspondence and the local Langlands functoriality, and the work of Jiang and Soudry on local descent for even special orthogonal groups (see \cite{Art13} and \cite{JS12}). It remains an open problem to find a set of invariants which can distinguish between $\pi$ and $\pi^c$. For split $\SO_4$ over a $p$-adic field, this is done in \cite{YZ23} using twisted exterior square local $\gamma$-factors, but remains open in higher rank (see the discussion at the end of \cite[\S1]{YZ23} or \cite[Proposition 7.3]{Mat24}).

Now, we introduce the key ideas needed to prove Theorem \ref{converse thm intro}. Recall that there are two major difficulties in our case, the outer automorphism, which we will denote by $c$
 (see \S\ref{setup} for a definition), and the quasi-split torus. The outer automorphism is handled in a similar manner to the split case considered in \cite{HL22a}. Namely, instead of considering solely the normalized Bessel function $\mathcal{B}_{\pi,\psi}$ of $\pi$ (see \S\ref{section Bessel Functions}), we also involve the normalized Bessel function $\mathcal{B}_{\pi^c,\psi}$ of $\pi^c$ (specifically, we do this when we twist by $\GL_l(F)$) to establish the following theorem.
 
\begin{thm}[Theorem \ref{Bessels Equal}]\label{Bessels Equal intro}
Let $\pi$ and $\pi^\prime$ be irreducible cuspidal $\psi$-generic representations of the quasi-split group $\SO_{2l}(F)$ with the same central character. If $$\gamma(\pi\times\tau,\psi)=\gamma(\pi^\prime\times\tau,\psi),$$
for all irreducible generic representations $\tau$ of $\GL_n(F)$ with $1 \leq n\leq l,$
then we have that $$(\B_{\pi,\psi}+\B_{\pi^c,\psi})(g)=(\B_{\pi^\prime,\psi}+\B_{\pi'{}^c,\psi})(g)$$  for any $g\in\SO_{2l}(F)$.
\end{thm}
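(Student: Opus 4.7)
The plan is to adapt the Bessel-function/Bruhat-cell method developed by Nien \cite{Nie14} for $\GL_l$ and extended to classical groups in \cite{LZ22a, HL22a} to the quasi-split even orthogonal setting. The basic principle is that, once $\gamma(\pi\times\tau,\psi)$ has been defined (Proposition \ref{gammafactor}) via a zeta integral and its image under a suitable Weyl element, both sides of the functional equation can be expanded as finite sums of products $\B_{\pi,\psi}(g)\,\B_{\tau,\psi}(\overline{g})$ over representatives of certain Bruhat cells of $\SO_{2l}(F)$. Subtracting the analogous expansions for $\pi$ and $\pi'$, the hypothesis yields that $\B_{\pi,\psi}-\B_{\pi',\psi}$ pairs to zero against $\B_{\tau,\psi}$ for every irreducible generic $\tau$ on the appropriate cells.

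First I would carry out a downward induction on the Bruhat stratum indexed by $n$, running $n$ from $1$ up to $l-1$. At each step, Nien's result that the Bessel functions $\B_{\tau,\psi}$ of irreducible generic representations of $\GL_n(F)$ span the relevant space of functions on the torus allows one to extract pointwise identities for $\B_{\pi,\psi}-\B_{\pi',\psi}$ on the portion of the Bruhat decomposition not yet controlled. The normalization $\B_{\pi,\psi}(e)=1$ initiates the induction. Throughout this range, the outer automorphism $c$ does not yet appear, since the relevant Weyl elements preserve the Levi structure in a $c$-invariant fashion.

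The critical case is $n=l$. Here the Weyl element entering the functional equation swaps the two halves of the Siegel Levi and its conjugation action sends $\pi$ to $\pi^c$. Consequently the Bessel-function expansion of $\gamma(\pi\times\tau,\psi)$ naturally pairs $\B_{\tau,\psi}$ against the \emph{symmetrized} object $\B_{\pi,\psi}+\B_{\pi^c,\psi}$ on the Siegel cell rather than against $\B_{\pi,\psi}$ alone. Equating the resulting expansion with that for $\pi'$, and combining with the pointwise agreement on all smaller cells obtained in the previous step, delivers exactly the desired identity $(\B_{\pi,\psi}+\B_{\pi^c,\psi})(g)=(\B_{\pi',\psi}+\B_{\pi'{}^c,\psi})(g)$ for all $g\in\SO_{2l}(F)$.

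The main obstacle, as emphasized in the introduction, will be handling the non-split part of the maximal torus. In the split case of \cite{HL22a} the coefficients appearing in the Bessel-function expansions collapse to orthogonality sums over characters of $F^\times$, which can be inverted cleanly. In the quasi-split case, Weyl conjugation brings in the anisotropic one-dimensional torus coming from the quadratic extension defining the quasi-split form, so those coefficients become character sums over norm-one elements of the quadratic extension. The hard technical point will be to establish an appropriate orthogonality relation on this anisotropic torus and to choose Weyl representatives so that the coefficients remain non-vanishing, so that one can still invert the pairing and extract pointwise Bessel-function equalities. This is where the new ideas beyond the split case are required.
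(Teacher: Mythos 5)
Your overall frame — stratify the Bruhat cells, control each stratum by $\GL_n$-twisted zeta integrals, and obtain a symmetrized identity because the outer automorphism intervenes at the top — matches the paper's architecture (the partition $\mathrm{B}(\SO_{2l})=\bigsqcup_{n=0}^{l-1}\mathrm{B}_n(\SO_{2l})$ and Theorems \ref{GL_n}, \ref{GL_{l-1}}, \ref{GL_l}). But two of the mechanisms you invoke are not the ones operating in the quasi-split case, and the differences are where the content lies.

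First, you attribute the appearance of $c$ to the $n=l$ Weyl element, saying it "swaps the two halves of the Siegel Levi and its conjugation action sends $\pi$ to $\pi^c$." That is the split-case picture. In the quasi-split case $c$ acts trivially on every Weyl element; it acts only on the anisotropic $\SO_2$ block of the torus (sending $b\mapsto -b$). The symmetrization arises instead because the map $tw\mapsto A$ taking a point in the Bessel support to its $\GL_l$-coordinate in the intertwined integral is two-to-one, identifying $tw$ with $ctcw$ (Proposition \ref{l embed} and the body of the proof of Theorem \ref{GL_l}). That two-to-one collapse — not a Weyl-element conjugation — is why the $\GL_l$ twist only detects $\B_{\pi,\psi}+\B_{\pi^c,\psi}$.

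Second, you propose an orthogonality relation on the norm-one torus of $\mathbb{F}_q(\sqrt{\rho})$ as the new technical ingredient. The paper does not need (and could not straightforwardly use) such a relation, because the zeta integrals pair against Whittaker functions on $\GL_n$, not against characters of the anisotropic torus. What is actually proved is a vanishing statement: on every cell $\mathrm{B}_n(\SO_{2l})$ with $n\leq l-2$, the Bessel function is supported only on torus elements with $a=\pm1$, $b=0$ (Proposition \ref{prop support on B_n}; Lemma \ref{center} for $n=0$). So the anisotropic part of the torus is invisible for $n\leq l-2$, and the $\GL_n$ twists suffice there. Only on $\mathrm{B}_{l-1}(\SO_{2l})$ does the anisotropic coordinate survive; the $\GL_{l-1}$ twist controls $a=\pm1$ and the $\GL_l$ twist controls $a\neq 1$ — there is no separate "$n=l$" stratum, since $\GL_l$ lands back in $\mathrm{B}_{l-1}(\SO_{2l})$ but on the complementary part of the torus. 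Without these vanishing and coverage facts, your inversion step has a genuine gap: you have not said why the $\GL_n$ twists determine the Bessel functions on the full torus, including its anisotropic factor, on each cell.
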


While the result is similar to the split case, the proofs have two main differences. The first one is that in the split case, the outer conjugation acts nontrivially on some of the simple roots and Weyl elements, while for the non-split case, the outer conjugation acts trivially on all the simple roots and Weyl elements. Consequently, the outer conjugation only nontrivially affects the torus and its effect is to partition the Bruhat cells corresponding to certain Weyl elements into two parts: those elements which are fixed by the outer conjugation and those which do not. More specifically, we see that the twists up to $\GL_{l-1}(F)$ determines the elements which are fixed by the outer conjugation (Theorem \ref{GL_{l-1}}) while the twists by $\GL_{l}(F)$ determines the  elements which are not fixed by the outer conjugation (Theorem \ref{GL_l}).

The second main difference is presented by the non-split part of the maximal torus. Indeed, in the split case, normalized Bessel functions $\mathcal{B}_{\pi,\psi}$ are nonzero on the part of the torus which is precisely the center (\cite[Lemma 4.3]{HL22a}). The proof of this relies on the fact that, for split groups, the center is precisely the intersection of all of the kernels of the simple roots. This fact is not true for quasi-split groups. In the quasi-split case, we still have that normalized Bessel functions $\mathcal{B}_{\pi,\psi}$ are nonzero on the part of the torus which is precisely the center (Lemma \ref{center}); however, the proof requires a more careful analysis of the effect of conjugating certain unipotent elements by elements of the torus. This careful analysis must also be carried out in other arguments where properties of the Bessel functions are examined in conjunction with elements of root subgroups. Note that, unlike split groups, the root subgroups of quasi-split groups are not necessarily 1-dimensional or even commutative in general.

Here is the structure of this paper. In \S\ref{setup}, the relevant groups, representations, and notions are introduced. In \S\ref{section Multiplicity one theorems and the gamma factor}, we establish multiplicity one results (Propositions \ref{MultOne} and \ref{Mult1}) and define the zeta integrals and $\gamma$-factors. In \S\ref{section Bessel Functions}, the Bessel functions are defined and relevant properties are collected. In \S\ref{section Twists by GLn}, \S\ref{section Twists by GL l-1}, and \S\ref{section Twists by GLl} we study the $\GL_n$ twists, $1 \leq n \leq l$, and determine the relation between the $\GL_n$ twists and the Bessel functions (Theorems \ref{GL_n}, \ref{GL_{l-1}}, \ref{GL_l}). In \S\ref{section The converse theorem}, we prove Theorem \ref{Bessels Equal intro} and the converse theorem (Theorem \ref{converse thm intro}).

\subsection*{Acknowledgements}

The author would like to thank Professors Baiying Liu, Dihua Jiang,and Freydoon Shahidi for their interest and constant support.
The author would also like to thank Professor Qing Zhang for helpful communications and comments. 

\section{The groups and representations}\label{setup}

Let $n, l \in \N$ and $q=p^r$ for some odd prime number $p$. Let $\mathbb{F}_q$ be the finite field of $q$ elements and fix a nontrivial additive character $\psi$ on $\mathbb{F}_q$.

Let $\GL_n$ to be the group $n\times n$ of matrices with entries in $\mathbb{F}_q$ and non-zero determinant. Let $I_n$ be the identity element,

$$J_n=\left(\begin{matrix}
0 & 0 & \cdots & 0 & 1 \\
0 & 0 & \cdots & 1 & 0 \\
\vdots & \vdots & \iddots & \vdots & \vdots \\
0 & 1 & \cdots & 0 & 0 \\
1 & 0 & \cdots & 0 & 0 \\
\end{matrix}\right),$$ and, for $\rho\in\mathbb{F}_q$,
$$J_{2l,\rho}=\mathrm{diag}(I_{l-1}, \left(\begin{matrix}
0 & 1 \\
-\rho & 0
\end{matrix}\right), I_{l-1}) \cdot J_{2l}.$$ We set $\SO_{2n+1}=\{g\in\GL_{2n+1} \, | \, \mathrm{det}(g)=1, {}^tgJ_{2n+1} g = J_{2n+1}\}$ and $\SO_{2l}(J_{2l,\rho})=\{g\in\GL_{2l} \, | \, \mathrm{det}(g)=1, {}^tgJ_{2l,\rho} g = J_{2l,\rho}\}$. We often simply write this as $\SO_{2l}.$ $\SO_{2l}$ is split if $\rho\in(\mathbb{F}_q)^2$ and is quasi-split, but not split, otherwise (in this case $\SO_{2l}$ splits over the quadratic extension $\mathbb{F}_q(\sqrt{\rho})$). Hereinafter, we fix $\SO_{2l}$ to be the quasi-split (non-split) group $\SO_{2l}(J_{2l,\rho})$ for a fixed $\rho\notin(\mathbb{F}_q)^2.$ 
Let $U_{\GL_{n}}$ and $U_{\SO_{2l}}$ be the subgroups of upper triangular matrices in $\GL_n$ and $\SO_{2l}$, respectively. Fix $B_{\SO_{2l}}=T_{\SO_{2l}}U_{\SO_{2l}}$ to be the standard Borel subgroup of $\SO_{2l}$ where $T_{\SO_{2l}}$ consists of all elements of the form
$$
t=\mathrm{diag}(t_1,\dots,t_{l-1},\left(\begin{matrix}
a & b\rho \\
b & a
\end{matrix}\right),t_{l-1}\inv,\dots,t_1\inv),
$$
where $t_1,\dots,t_{l-1}\in \mathbb{F}_q^\times$ and $a,b\in\mathbb{F}_q$ with  $a^2-b^2\rho=1.$ Note that $T_{\SO_{2l}}$ is a maximal torus in $\SO_{2l}.$

Set $$
c=\mathrm{diag}(I_{l-1},
1,-1, I_{l-1}).$$ Note that $c\notin \SO_{2l};$ however, $c \SO_{2l} c\inv=c\SO_{2l}c= \SO_{2l}$. Given a representation $\pi$ of $\SO_{2l},$ we define a new representation $\pi^c$ of $\SO_{2l}$ by $\pi^c(g)=\pi(cgc).$

We now discuss the embeddings needed to define the zeta integrals later. These are the analogues for finite fields of the local cases defined by Kaplan in \cite{Kap13a, Kap15}. If $n<l$ we embed $\SO_{2n+1}$ into $\SO_{2l}$ via
$$
\left(
\begin{matrix}
A & B & C \\
D & E & K \\
L & P & Q 
\end{matrix} 
\right)
\mapsto
\mathrm{diag}(I_{l-n-1},
M^{-1}\left(\begin{matrix}
A & & B & C \\
 & 1 & & \\
D & & E & K \\
L & & P & Q 
\end{matrix}\right)
M, I_{l-n-1})
$$
where $A$ and $Q$ are $n\times n$ matrices and 
$$
M =\mathrm{diag}(I_n,\left(\begin{matrix}
    0 & 2 \\
    1 & 0
    \end{matrix}\right), I_n) 
$$
This embeds $\SO_{2n+1}$ into the standard Levi subgroup $\GL_{l-n-1}\times\SO_{2n+2}$ of $\SO_{2l}$.

Set $\gamma=\frac{\rho}{2}.$ If $l=n$, we embed $\SO_{2l}$ into $\SO_{2l+1}$ via
$$
\left(\begin{matrix}
A & B \\
C & D
\end{matrix}\right)
\mapsto
M^{-1}
\left(\begin{matrix}
A & & B \\
 & 1 & \\
C & & D  
\end{matrix}\right)
M
$$
where $A, B, C,$ and $D$ are $l\times l$ matrices and
$$
M=\mathrm{diag}(I_{l-1},\left(\begin{matrix}
    0 & 1 & 0 \\
    \frac{1}{2} & 0 & \frac{1}{2\gamma} \\
    \frac{1}{2} & 0 & \frac{-1}{2\gamma} 
    \end{matrix}\right), I_{l-1})
$$
Note that the embedding takes the torus $T_{\SO_{2l}}$ to a torus in $\SO_{2l+1}$, but not the standard torus. Indeed, the embedding takes $t$ (as above) to
$$
\mathrm{diag}(s,\left(\begin{matrix}
\frac{1}{2}(1+a) & b & \frac{1}{2\gamma}(1-a) \\
\gamma b & a & -b \\
\frac{\gamma}{2}(1-a) & -\gamma b & \frac{1}{2}(1+a)
\end{matrix}\right), s^*)
$$
where $s=\mathrm{diag}(t_1, t_2, \dots, t_{l-1})$.

Next, we define when a representation is be generic. Recall that $U_{\GL_n}$ and $U_{\SO_{2l}}$ are the subgroups of upper triangular matrices in $\GL_n$ and $\SO_{2l}$ respectively and that we fixed an additive nontrivial character $\psi$ of $\mathbb{F}_q$. We abuse notation and define a generic character, which we will also call $\psi$, on $U_{\GL_n}$ and $U_{\SO_{2l}}$. For $u=(u_{i,j})_{i,j=1}^n\in U_{\GL_n}$, we set $\psi(u)=\psi\left(\sum_{i=1}^{l-1} u_{i,i+1}\right).$ For $u=(u_{i,j})_{i,j=1}^{2l}\in U_{\SO_{2l}},$ we set 
$$
\psi(u)= \psi\left(\sum_{i=1}^{l-2} u_{i,i+1}+\frac{1}{2}u_{l-1,l+1}  \right).
$$

We say an irreducible representation $\pi$ of $\SO_{2l}$ is $\psi$-generic if $$\mathrm{Hom}_{U_{SO_{2l}}}(\pi,\psi)\neq 0.$$
Similarly, we say an irreducible representation $\tau$ of $\GL_{n}$ is $\psi$-generic if $$\mathrm{Hom}_{U_{GL_{n}}}(\tau,\psi)\neq 0.$$
A nonzero intertwining operator in these spaces is called a Whittaker functional and it is well known that Whittaker functionals are unique up to scalars (by uniqueness of Whittaker models). Fix $\Gamma\in\mathrm{Hom}_{U_{SO_{2l}}}(\pi,\psi)$ to be a nonzero Whittaker functional. For $v\in\pi$, let $W_v(g)=\Gamma(\pi(g)v)$ for any $g\in\SO_{2l}$ and set $\mathcal{W}(\pi,\psi)=\{W_v \, | \, v\in\pi\}.$ The space $\mathcal{W}(\pi,\psi)$ is called the $\psi$-Whittaker model of $\pi.$ By Frobenius reciprocity, $\mathrm{Hom}_{U_{SO_{2l}}}(\pi,\psi)\cong\mathrm{Hom}_{SO_{2l}}(\pi,\mathrm{Ind}_{U_{\SO_{2l}}}^{\SO_{2l}}(\psi)).$ Thus, $\pi$ can be realized as a subrepresentation of $\mathrm{Ind}_{U_{\SO_{2l}}}^{\SO_{2l}}(\psi)$ via the map $\pi \rightarrow \mathcal{W}(\pi,\psi)$ given by $v\mapsto W_v.$ Moreover, by uniqueness of Whittaker models, this subrepresentation occurs with multiplicity one inside $\mathrm{Ind}_{U_{\SO_{2l}}}^{\SO_{2l}}(\psi)$. We also note that the analogous results hold for $\psi$-generic representations $\tau$ of $\GL_n.$

Let  $Q_n=L_n V_n$ be the standard Siegel parabolic subgroup of $\SO_{2n+1}$ with Levi subgroup $L_n\cong \GL_n.$ For $a\in\GL_n$ we let $l_n(a)=\mathrm{diag}(a,1,a^*)\in L_n$ where $a^*=J_n{}^ta^{-1}J_n.$ Let $\tau$ be an irreducible $\psi^{-1}$-generic representation of $\GL_{n}$ and set $I(\tau)=\mathrm{Ind}_{Q_n}^{\SO_{2n+1}}\tau.$ An element $\xi\in I(\tau)$ is a function $\xi:\SO_{2n+1}\rightarrow\tau$ satisfying $$
\xi(l_n(a)ug)=\tau(a)\xi(g), \forall a\in\GL_n, u\in V_n, g\in\SO_{2n+1}.
$$
Let $\Lambda_\tau \in \mathrm{Hom}_{U_{GL_{n}}}(\tau,\psi^{-1})$ be a fixed nonzero homomorphism. For $\xi\in I (\tau)$, let $f_\xi : \SO_{2n+1}\times\GL_n \rightarrow \C$ be the function given by $$
f_\xi(g,a)=\Lambda_\tau(\tau(a)\xi(g)).
$$
Let $I(\tau,\psi^{-1})$ be the space of functions generated by $f_\xi, \xi\in I(\tau).$ Note that for $f\in I(\tau,\psi^{-1}),$ we have 
$$
f(g,ua)=\psi^{-1}(u)f(g,a), \forall g\in\SO_{2n+1}, u\in U_{\GL_n}, a\in\GL_n.
$$

We will also let $\tau^*$ be the contragradient representation of $\GL_n$ defined by $\tau^*(a)=\tau(a^*).$

\section{Multiplicity one theorems and the gamma factor}\label{section Multiplicity one theorems and the gamma factor}

The goal of this section is to show that Bessel models for even quasi-split special orthogonal groups over finite fields are unique. The analogous results in the split case are \cite[Propositions 3.1 and 3.2]{HL22a}. The quasi-split case follows from similar arguments. We illustrate the proofs here for convenience. Our primary reference for the setup is \cite{Kap15}. Note that the notation of this section matches with the notation in the setup of \cite{GGP12a} with $H=N^{l-n}\times \SO_{2n+1}$ and $\nu=\psi_\gamma$ in the case $l>n$ and $H=N_0\times\SO_{2l}$ and $\nu=\psi_\gamma$ in the case $l=n$ (in both cases $\psi_\gamma$ is extended to be trivial on the special orthogonal group). Moreover, in the case $l=n,$ $N_0$ and $\nu$ are both trivial. 

\subsection{The case $l=n$}
Assume $l=n.$
In this case, the unipotent piece $N_0$ is trivial and $H$ is $\SO_{2l}$ embedded inside of $\SO_{2l+1}$ as in the previous section.

\begin{prop}\label{MultOne}
Let $Q=LV$ be a parabolic subgroup of $\SO_{2l+1}$ and $\sigma$ be an irreducible representation of $L.$ Let $\pi$ be an irreducible cuspidal representation of $\SO_{2l}$ and $\tau=\mathrm{Ind}_Q^{\SO_{2l+1}} \sigma$. Then

$$
\mathrm{dim}\mathrm{Hom}_{\SO_{2l}}(\pi,\tau)\leq 1.
$$
\end{prop}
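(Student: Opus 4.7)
The plan is to apply the Mackey double-coset decomposition combined with Frobenius reciprocity and then exploit the cuspidality of $\pi$ to kill all but at most one contribution. Concretely, Frobenius reciprocity gives
\[
\mathrm{Hom}_{\SO_{2l}}\bigl(\pi,\mathrm{Ind}_{Q}^{\SO_{2l+1}}\sigma\bigr) \cong \bigoplus_{w\in Q\backslash \SO_{2l+1}/\SO_{2l}} \mathrm{Hom}_{w\inv Q w\cap \SO_{2l}}\bigl(\pi,{}^{w}\sigma\bigr),
\]
so it suffices to show that the total dimension of the right-hand side is at most one. I would enumerate the double cosets $Q\backslash \SO_{2l+1}/\SO_{2l}$ via Bruhat-style representatives; because the embedding of $\SO_{2l}$ into $\SO_{2l+1}$ fixed in \S\ref{setup} absorbs the non-split rank-one block of the torus of $\SO_{2l}$ into a split rank-one piece of $\SO_{2l+1}$, the orbit structure will be essentially parallel to the split analogue treated in \cite[Proposition 3.1]{HL22a}.

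The second step is to show that for every representative $w$ other than a single distinguished one $w_{0}$, the intersection $w\inv Qw\cap \SO_{2l}$ contains the unipotent radical $N$ of a proper parabolic subgroup of $\SO_{2l}$, and that ${}^{w}\sigma$ is trivial on $N$ (because $N$ arises from the unipotent radical $V$ of $Q$). Consequently,
\[
\mathrm{Hom}_{w\inv Qw\cap \SO_{2l}}\bigl(\pi,{}^{w}\sigma\bigr)\hookrightarrow \mathrm{Hom}_{N}\bigl(\pi|_{N},\mathbf{1}\bigr)\cong (\pi_{N})^{*}=0
\]
by cuspidality of $\pi$, so only the contribution of $w_{0}$ survives.

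For the distinguished double coset $w_{0}$, the intersection $w_{0}\inv Q w_{0}\cap \SO_{2l}$ projects onto a subgroup of $\SO_{2l}$ for which the remaining Hom space is controlled by a uniqueness-of-models statement: combining the irreducibility of $\sigma$ and of $\pi$ with Frobenius reciprocity inside $\SO_{2l}$ yields at most a one-dimensional space of intertwiners. The main obstacle is the orbit classification itself, namely verifying case by case that every non-distinguished stabilizer contains a unipotent radical of a proper parabolic of $\SO_{2l}$ on which ${}^{w}\sigma$ is trivial. The non-split torus adds some extra bookkeeping relative to the split case of \cite{HL22a}, but the embedding chosen in \S\ref{setup} is designed precisely to make this piece harmless, so the argument reduces to the strategy of Kaplan \cite{Kap15} adapted to the quasi-split setting.
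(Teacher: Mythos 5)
Your plan follows the same overall skeleton (Mackey double cosets, cuspidality kills the non-distinguished cosets, a multiplicity-one theorem handles the distinguished one) as the machinery underlying the paper's cited reference, but there is a genuine gap and a misattribution.

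First, the gap. You propose to kill the non-distinguished double cosets using cuspidality of $\pi$, which requires a geometric fact: for every such $w$, the subgroup $w\inv V w\cap \SO_{2l}$ (not merely $w\inv Q w\cap\SO_{2l}$) contains the unipotent radical $N$ of a proper parabolic of $\SO_{2l}$, since it is only on $w\inv V w$ that ${}^w\sigma$ is automatically trivial. You acknowledge this as ``the main obstacle'' but do not establish it, and it is not obviously true for all non-distinguished cosets. In the argument behind \cite[Proposition 5.1]{GGP12b}, the vanishing of the non-distinguished contributions is typically obtained from cuspidality of $\sigma$ --- the stabilizer projects to $L$ with image containing the unipotent radical of a proper parabolic of $L$, and the Jacquet module of $\sigma$ vanishes there --- rather than (or in addition to) cuspidality of $\pi$. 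This is precisely why the paper's first move is to \emph{reduce to $\sigma$ cuspidal} by transitivity of parabolic induction: $\tau\subseteq\mathrm{Ind}_{L'V'V}^{\SO_{2l+1}}\sigma'\otimes \mathbf{1}_V$ with $\sigma'$ cuspidal on a Levi of $L$. Your proposal skips this reduction, and without it the orbit-by-orbit vanishing is not guaranteed to go through with your chosen mechanism.

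Second, the distinguished orbit. You write that ``combining the irreducibility of $\sigma$ and of $\pi$ with Frobenius reciprocity yields at most a one-dimensional space of intertwiners,'' but Frobenius reciprocity gives no such bound. The actual input here is the Gross--Prasad multiplicity-one theorem for the pair $(\SO_{2l+1},\SO_{2l})$ --- over $p$-adic fields this is \cite{AGRS10}, and the paper replaces it with Waldspurger's orthogonal-group version \cite{Wal12}. This deep theorem cannot be deduced from irreducibility and Frobenius reciprocity. Relatedly, you attribute the strategy to Kaplan \cite{Kap15}, but that reference is about the local zeta integrals, not about multiplicity one; the correct sources for the multiplicity-one machinery are \cite{GGP12b} and \cite{Wal12}, which is exactly what the paper's one-paragraph proof cites after the reduction to $\sigma$ cuspidal.
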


\begin{proof}
Note that Proposition 5.1 of \cite{GGP12b} holds in our case. Indeed, we can follow their proof closely, except instead of using the multiplicity one result of \cite{AGRS10}, we use an analogous result for special orthogonal groups from \cite{Wal12}. Thus, the claim holds when $\sigma$ is cuspidal. 

Now suppose that $\sigma$ is not cuspidal. Then there exists a parabolic subgroup $Q^\prime=L^\prime V^\prime$ of $L$ and cuspidal representation $\sigma^\prime$ of $L'$ such that $\sigma\subseteq \mathrm{Ind}_{Q^\prime}^{L}\sigma^\prime.$ By transitivity of parabolic induction, $\tau\subseteq \mathrm{Ind}_{L^\prime V^\prime V}^{\SO_{2l+1}}\sigma^\prime\otimes 1_{ V}$ and hence the claim follows from the case where $\sigma$ is cuspidal.
\end{proof}

\subsection{The case $l> n$}
Assume $l>n$.
We define the subgroup $N^{l-n}\subseteq \SO_{2l}$. Let $P_{l-n-1}=M_{l-n-1}N_{l-n-1}$ be the standard parabolic subgroup of $\SO_{2l}$ with Levi subgroup, $M_{l-n-1}$, isomorphic to $\GL_{l-n-1}\times\SO_{2n+2}.$ We embed $U_{\GL_{l-n-1}}$ inside of $\GL_{l-n-1}$ which is realized inside the Levi subgroup $M_{l-n-1}.$ Define $N^{l-n}=U_{\GL_{l-n-1}}N_{l-n-1}.$ That is,

$$
N^{l-n}=\left\{\left(\begin{matrix}
u_1 & v_1 & v_2 \\
 & I_{{2n+2}} & v_1^\prime \\
 & & u_1^*
\end{matrix}\right) \in \SO_{2l} \ | \ u_1\in U_{\GL_{l-n-1}} \right\}.
$$
For $v=(v_{i,j})_{i,j=1}^{2l}\in N^{l-n},$ we define a character $\psi_\gamma$ of $N^{l-n}$ by

$$
\psi_\gamma(v)= \psi\left(\sum_{i=1}^{l-n-1} v_{i,i+1}+\frac{1}{2}v_{l-n-1,l+1}  \right) . 
$$

Note this character is trivial when $n=l-1.$ Let $H=\SO_{2n+1}N^{l-n}$ where $\SO_{2n+1}$ is realized via the embedding into $\SO_{2n+2}$ (as in \S\ref{setup}) inside $M_{l-n-1}$ and extend $\psi_\gamma$ trivially across $\SO_{2n+1}$ so that $\psi_\gamma$ is now a character of $H$.

\begin{prop}\label{Mult1}
Let $P=MN$ be a parabolic subgroup of $\SO_{2n+1}$ and $\sigma$ be an irreducible representation of the Levi subgroup $M$. Let $\pi$ be an irreducible cuspidal representation of $\SO_{2l}$ and $\tau=\mathrm{Ind}_P^{\SO_{2n+1}} \sigma.$ Then,
$$
\mathrm{dim}\mathrm{Hom}_{H}(\pi,\tau\otimes \psi_\gamma)\leq 1.
$$
\begin{proof}
The proof is the similar to that of Proposition \ref{MultOne}, except instead of using the special orthogonal analogue of \cite[Proposition 5.1]{GGP12b}, we use the special orthogonal analogue of \cite[Proposition 5.3]{GGP12b}.
\end{proof}

\end{prop}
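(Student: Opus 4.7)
The plan is to mirror the structure of the proof of Proposition \ref{MultOne}: first reduce to the case where $\sigma$ is cuspidal via induction by stages, then establish the multiplicity one bound in the cuspidal case by a Bessel-model argument of Gan--Gross--Prasad type.

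For the reduction step, if $\sigma$ is not cuspidal then there exists a proper parabolic subgroup $P'=M'N'$ of $M$ and a cuspidal representation $\sigma'$ of $M'$ with $\sigma \hookrightarrow \mathrm{Ind}_{P'}^{M}\sigma'$. Transitivity of parabolic induction gives $\tau \hookrightarrow \mathrm{Ind}_{M'N'N}^{\SO_{2n+1}}\sigma'$, and since $\mathrm{Hom}_{H}(\pi,\,\cdot\,\otimes\psi_\gamma)$ is left exact the bound will follow from the cuspidal case applied to $\sigma'$. This is literally the same argument used at the end of Proposition \ref{MultOne}, so no new input is required here.

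For the cuspidal case, the strategy is to adapt the special orthogonal analogue of \cite[Proposition 5.3]{GGP12b} to our finite-field setup. Concretely, one applies Frobenius reciprocity together with the Bruhat decomposition to filter $\mathrm{Hom}_H(\pi,\tau\otimes\psi_\gamma)$ by $H$-orbits on $\SO_{2n+1}/P$; the associated graded pieces are Hom spaces of $\pi$ restricted to orbit stabilizers, paired against twisted Jacquet modules of $\sigma$. A root-theoretic analysis, exploiting that the stabilizer of every non-open orbit contains a unipotent subgroup on which $\psi_\gamma$ remains nontrivial, annihilates all but one contribution. The surviving open orbit reduces the problem to the multiplicity one statement for the Bessel period on the pair $(\SO_{2l},\SO_{2n+1})$ attached to the unipotent $N^{l-n}$ and character $\psi_\gamma$, which for special orthogonal groups is the content of Waldspurger \cite{Wal12} (playing the role that \cite{AGRS10} plays in the symplectic case treated in \cite{GGP12b}). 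Cuspidality of $\pi$ is what allows one to pass from an inequality of dimensions to the sharp bound $\leq 1$, since it ensures vanishing of all Jacquet modules of $\pi$ along proper parabolics of $\SO_{2l}$.

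The main obstacle will be the orbit analysis. One has to enumerate the $H$-double cosets on $\SO_{2n+1}/P$ with respect to the non-standard embedding built from the conjugating matrix $M$ of \S\ref{setup}, and verify that $\psi_\gamma$ is nontrivial on the unipotent radical of every stabilizer except the distinguished one. The finite-field setting removes the analytic subtleties of the local case, but combinatorially this root-by-root check is the heart of the argument, and it is precisely what forces the particular form of the character $\psi_\gamma$ chosen in the setup.
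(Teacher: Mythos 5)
Your proposal follows the same route as the paper's one-line proof: reduce to cuspidal $\sigma$ by transitivity of parabolic induction, then invoke the special orthogonal analogue of \cite[Proposition 5.3]{GGP12b} with Waldspurger's multiplicity one theorem \cite{Wal12} standing in for \cite{AGRS10}. Your expansion of the internal Mackey-theoretic orbit analysis inside that GGP proposition --- filtering by $H$-orbits on $\SO_{2n+1}/P$, killing non-open orbits via nontriviality of $\psi_\gamma$ on unipotent stabilizers together with vanishing of Jacquet modules of the cuspidal $\pi$, and reducing the open orbit to the Bessel-period multiplicity one --- is a faithful account of what the cited argument does.
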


\subsection{The Integrals}

Now, let $\pi$ be an irreducible $\psi$-generic cuspidal representation of $\SO_{2l}$ and $\tau$ be an irreducible $\psi\inv$-generic representation of $\GL_n.$ Let $W\in\mathcal{W}(\pi,\psi)$ and $f\in I(\tau,\psi^{-1})$. Next, we  define ``integrals" $\Psi(W,f)$ analogous to the local integrals of \cite{Kap15}. Over local fields, the corresponding objects are actually integrals; however, over finite fields, these integrals are just sums. Nevertheless, we refer to them as integrals or zeta integrals. Note that \cite{Kap15} defines integrals for any $n$ and $l$; however, we only need the case of $n\leq l$ for the converse theorem and so we do not consider the case $n>l$.

First, suppose that $l=n.$ Then we define
$$
\Psi(W,f)=\sum_{g\in U_{\SO_{2l}}\setminus\SO_{2l}} W(g)f(w_{l,l}g, I_{l})
$$
where 
$$
w_{l,l}=\left(\begin{matrix}
 \gamma I_{l} & &  \\
  & 1 & \\
  & & \gamma\inv I_l
\end{matrix}
\right)\in\SO_{2l+1}.$$
The integral satisfies the property $\Psi(g\cdot W,g\cdot f)=\Psi(W,f)$ for any $g\in\SO_{2l}.$

Next, suppose that $n<l.$ Then we define

$$
\Psi(W,f)=\sum_{g\in U_{\SO_{2n+1}}\setminus\SO_{2n+1}} \left(\sum_{r\in R^{l,n}} W(r w^{l,n} g (w^{l,n})\inv)\right)   f(g, I_{l})
$$
where
$$
w^{l,n}=\left(\begin{matrix}
 & I_n & & & \\
 I_{l-n-1} & & & & \\
 & & I_2 & & \\
 & & & & I_{l-n-1} \\
 & & & I_n &
\end{matrix}
\right)\in\SO_{2l}$$
and
$$
R^{l,n}=\left\{\left(\begin{matrix}
I_n & & & & \\
x & I_{l-n-1} & & & \\
& & I_2 & & \\
& & & I_{l-n-1} & \\
& & & x^\prime & I_n
\end{matrix}\right)\in\SO_{2l}\right\}.
$$
The integral satisfies the property $\Psi((gn)\cdot W,g\cdot f)=\psi\inv_\gamma(n)\Psi(W,f)$ for any $g\in\SO_{2n+1}$ and $n\in N^{l-n}.$ Note that in the case $n<l$, our integral differs from \cite{Kap15} slightly. The difference is a right translation of the Whittaker functional by $(w^{l,n})\inv.$ This is not a significant theoretical issue since right translation preserves $\mathcal{W}(\pi,\psi);$ however, it simplifies the calculations of the integrals later.

Next, we define an intertwining operator $M(\tau,\psi^{-1}): I(\tau,\psi^{-1})\rightarrow I(\tau^*,\psi^{-1})$ by 
$$
M(\tau,\psi^{-1})f(h,a)=\sum_{u\in V_n} f(w_n u h, d_n a^*)
$$
where $w_n=\left(\begin{matrix}
 & & I_n \\
 & (-1)^n & \\
 I_n & &
\end{matrix}\right)$, $d_n=\mathrm{diag}(-1,1,-1,\dots,(-1)^n)\in\GL_n$, and $V_n$ is the unipotent radical of the standard parabolic subgroup $Q_n=L_n V_n$ of $\SO_{2n+1}$ where $L_n\cong \GL_n$, and $a^*=J_n {}^t a^{-1} J_n.$

We now define the $\gamma$-factor in the following proposition.

\begin{prop}\label{gammafactor}
Let $\pi$ be an irreducible $\psi$-generic cuspidal representation of $\SO_{2l}$ and $\tau$ be a $\psi\inv$-generic representation of $\GL_n.$ Let $W\in\mathcal{W}(\pi,\psi)$ and $f\in I(\tau,\psi^{-1})$. Then there exists a constant $\gamma(\pi\times\tau, \psi)\in\mathbb{C}$, called the $\gamma$-factor, such that
$$
\gamma(\pi\times\tau, \psi)\Psi(W,f)=\Psi(W,M(\tau,\psi^{-1})f).
$$
\end{prop}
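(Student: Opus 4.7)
The plan is to produce $\gamma(\pi \times \tau, \psi)$ as the proportionality constant between two invariant bilinear forms on $\mathcal{W}(\pi, \psi) \times I(\tau, \psi^{-1})$, using the multiplicity one theorems established in Propositions \ref{MultOne} and \ref{Mult1}. Set
$$B_1(W, f) = \Psi(W, f) \quad \text{and} \quad B_2(W, f) = \Psi(W, M(\tau, \psi^{-1}) f).$$
The strategy is to show that both bilinear forms carry the same equivariance and therefore lie in a common Hom space whose dimension is at most one, so that they must be proportional.

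The equivariance of $B_1$ is exactly the property recorded immediately after the definition of $\Psi$: $\SO_{2l}$-invariance in the case $l = n$, and $H$-equivariance twisted by $\psi_\gamma^{-1}$ in the case $l > n$. For $B_2$ the key additional input is that the intertwining operator $M(\tau, \psi^{-1})$ commutes with right translation by $\SO_{2n+1}$. This is a routine change of variables in the sum defining $M(\tau, \psi^{-1})$, since $w_n$, $d_n$, and $V_n$ do not depend on the translation parameter. Combined with the invariance of the zeta integral associated to $\tau^*$, this yields $B_2(g \cdot W, g \cdot f) = B_2(W, f)$ (and the corresponding twisted identity in the $l > n$ case).

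Via the isomorphism $I(\tau, \psi^{-1}) \cong I(\tau)$ furnished by $\xi \mapsto f_\xi$, the space of bilinear forms with the required equivariance is naturally a Hom space to which Propositions \ref{MultOne} and \ref{Mult1} apply: $\mathrm{Hom}_{\SO_{2l}}(\pi, I(\tau)^\vee)$ when $l = n$, and the analogous Hom space over $H$ twisted by $\psi_\gamma$ when $l > n$. Since $I(\tau)^\vee$ is again an induction from the Siegel parabolic with irreducible inducing data, both multiplicity one results bound these dimensions by one. Hence $B_2 = \gamma(\pi \times \tau, \psi) B_1$ for a scalar $\gamma(\pi \times \tau, \psi) \in \C$. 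The main obstacle I foresee is ruling out the pathological situation $B_1 \equiv 0$ while $B_2 \not\equiv 0$, which would leave $\gamma(\pi \times \tau, \psi)$ undefined; I would address this by showing that $M(\tau, \psi^{-1})$ is invertible, for example by checking that the composition $M(\tau^*, \psi^{-1}) \circ M(\tau, \psi^{-1})$ acts as a nonzero scalar on $I(\tau, \psi^{-1})$. Then $B_1$ vanishes identically if and only if $B_2$ does, so $\gamma$ is consistently defined in all cases.
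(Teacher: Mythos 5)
Your overall strategy is exactly what the paper has in mind: Propositions~\ref{MultOne} and~\ref{Mult1} bound the dimension of a space of equivariant forms by one, and both sides of the functional equation lie in that space, hence are proportional. The paper's own proof is literally the one sentence ``This is immediate from Propositions~\ref{MultOne} and~\ref{Mult1},'' so you have correctly filled in the content behind that sentence. Your equivariance check --- that $M(\tau,\psi^{-1})$ commutes with right translation by $\SO_{2n+1}$, so $\Psi(W,M(\tau,\psi^{-1})f)$ inherits the same equivariance as $\Psi(W,f)$ --- is correct and is the key point that lets both bilinear forms land in the same one-dimensional Hom space.

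However, the well-definedness issue you flag at the end is handled incorrectly. You are right that a scalar of proportionality is only guaranteed to exist if one rules out the case $\Psi(W,f)\equiv 0$ while $\Psi(W,M(\tau,\psi^{-1})f)\not\equiv 0$. But invertibility of $M(\tau,\psi^{-1})$ does not rule this out. If $M(\tau,\psi^{-1})$ is a bijection onto $I(\tau^*,\psi^{-1})$, then $\Psi(W,M(\tau,\psi^{-1})f)\equiv 0$ is equivalent to the vanishing of the zeta integral for the pair $(\pi,\tau^*)$, while $\Psi(W,f)\equiv 0$ is the vanishing for $(\pi,\tau)$; these are a priori independent statements, and nothing in the argument links them. (Your suggested computation that $M(\tau^*,\psi^{-1})\circ M(\tau,\psi^{-1})$ is a nonzero scalar, even if established, only shows that vanishing of the zeta integral for $(\pi,\tau)$ forces vanishing of $\Psi(W,M(\tau^*,\psi^{-1})g)$, which is again a statement about the ``second'' bilinear form for $(\pi,\tau^*)$, not the first.) The correct way to close this gap --- and the one the paper uses, implicitly --- is to exhibit specific $W$ and $f$ with $\Psi(W,f)\neq 0$. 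This is exactly what Propositions~\ref{GLnnonzero}, \ref{GL_{l-1}nonzero}, and~\ref{GL_lnonzero} establish, taking $W=\B_{\pi,\psi}$ and $f=f_v$ for $v$ a Whittaker vector of $\tau$; the computation ultimately rests on the cuspidality of $\pi$ via Proposition~\ref{uppertriangular} and Lemma~\ref{center}. If you want a fully self-contained proof of Proposition~\ref{gammafactor}, you should prove (or cite forward) this non-vanishing, rather than argue via invertibility of the intertwining operator.
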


\begin{proof}
This is immediate from Propositions \ref{MultOne} and \ref{Mult1}.
\end{proof}

\section{Bessel Functions}\label{section Bessel Functions}

In this section, we introduce and study various properties of the Bessel functions. Many of these ideas have local analogues that can be found in \cite{Bar95}. However, over local fields, the situation is more complicated as one has to work with partial Bessel functions as opposed to Bessel functions.

Let $\pi(U_{\SO_{2l}}, \psi)$ be the subspace of $\pi$ generated by $$\{\pi(u)v-\psi(u)v \, | \, u\in U_{SO_{2l}}, \, v\in\pi\}$$ and let $\pi_{U_{\SO_{2l}}, \psi}=\pi/\pi(U_{\SO_{2l}}, \psi)$ be the twisted Jacquet module. Since $\pi$ is an irreducible $\psi$-generic representation, we have $\mathrm{dim}\pi_{U_{\SO_{2l}}, \psi}=1.$ Let $v\in\pi$ such that $v\notin \pi(U_{\SO_{2l}}, \psi)$ and set $$
v_0=\frac{1}{|U_{SO_{2l}}|}\sum_{u\in U_{\SO_{2l}}}\psi^{-1}(u)\pi(u)v.
$$
By the Jacquet-Langlands lemma (see \cite[Lemma 2.33]{BZ76}), $v_0\neq 0.$ Recall that we fixed $\Gamma\in\mathrm{Hom}_{U_{SO_{2l}}}(\pi,\psi)$. We have $\Gamma(v_0)\neq 0$ and by construction it follows that $\pi(u)v_0=\psi(u)v_0$ for any $u\in U_{\SO_{2l}}.$ Any such vector is called a Whittaker vector of $\pi.$
For $g\in\SO_{2l}$, set $$\mathcal{B}_{\pi,\psi}(g)=\frac{\Gamma(\pi(g)v_0)}{\Gamma(v_0)}.$$ $\mathcal{B}_{\pi,\psi}$ is called the normalized Bessel function for $\pi$ (normalized such that the value at the identity is $1$) and it is immediate that $\mathcal{B}_{\pi,\psi}\in\mathcal{W}(\pi,\psi).$ 

\begin{prop}\label{Besselprop} We have
$\mathcal{B}_{\pi,\psi}(I_{2l})=1$ and $\mathcal{B}_{\pi,\psi}(u_1gu_2)=\psi(u_1u_2)\mathcal{B}_{\pi,\psi}(g)$ for any $g\in\SO_{2l}$ and any $u_1,u_2\in U_{\SO_{2l}}.$
\end{prop}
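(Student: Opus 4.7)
The plan is to read off both statements directly from the two properties established immediately before the proposition, namely that $\Gamma \in \mathrm{Hom}_{U_{\SO_{2l}}}(\pi,\psi)$ (so $\Gamma(\pi(u)w)=\psi(u)\Gamma(w)$ for every $u\in U_{\SO_{2l}}$ and $w\in\pi$) and that $v_0$ is a Whittaker vector (so $\pi(u)v_0=\psi(u)v_0$ for every $u\in U_{\SO_{2l}}$). Both facts are already in hand in the paragraph preceding the proposition, so no new machinery is needed.

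The normalization $\mathcal{B}_{\pi,\psi}(I_{2l})=1$ will follow by plugging $g=I_{2l}$ into the definition $\mathcal{B}_{\pi,\psi}(g)=\Gamma(\pi(g)v_0)/\Gamma(v_0)$, and noting that $\Gamma(v_0)\neq 0$ so the quotient makes sense.

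For the bi-equivariance, I would expand
\[
\mathcal{B}_{\pi,\psi}(u_1 g u_2)=\frac{\Gamma(\pi(u_1)\pi(g)\pi(u_2)v_0)}{\Gamma(v_0)},
\]
then apply $\pi(u_2)v_0=\psi(u_2)v_0$ to pull $\psi(u_2)$ out of the argument of $\Gamma$, and apply the Whittaker property of $\Gamma$ with $w=\pi(g)v_0$ to pull $\psi(u_1)$ out. Multiplying these scalar factors and using that $\psi$ is a character of $U_{\SO_{2l}}$ (hence $\psi(u_1)\psi(u_2)=\psi(u_1 u_2)$) yields $\psi(u_1 u_2)\mathcal{B}_{\pi,\psi}(g)$, as required.

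There is no real obstacle here: the statement is a formal consequence of the two equivariance properties, and the only thing to be careful about is that the two factors of $\psi$ combine correctly. The computation is routine and purely bookkeeping, which is presumably why the paper records it as a short proposition before moving on to the substantive properties of $\mathcal{B}_{\pi,\psi}$ in the sections that follow.
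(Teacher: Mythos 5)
Your proof is correct and is precisely the unpacking of what the paper means by ``immediate from the definitions'': use that $v_0$ is a Whittaker vector on the right, that $\Gamma$ is a Whittaker functional on the left, and combine the two scalar factors via the character property of $\psi$. Nothing further is needed.
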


\begin{proof}
This is immediate from the definitions.
\end{proof}

We fix the split part, denoted $S_{\SO_{2l}}$, of the torus $T_{\SO_{2l}}$ to be the subset consisting of elements of the form $t=\mathrm{diag}(t_1,\dots,t_{l-1},I_2,t_{l-1}\inv,\dots,t_1\inv).$ Let $W(\SO_{2l})$ be the Weyl group of $\SO_{2l}$ and let $\Delta(\SO_{2l})$ be the set of simple roots with respect to $S_{\SO_{2l}}$. We can choose the simple roots to be given by $\alpha_i(t)=t_it_{i+1}\inv$ for $i=1,\dots,l-2$ and $\alpha_{l-1}(t)=t_{l-1}.$ In the case of non-split groups, the root subgroups are not guaranteed to be isomorphic to $\mathbb{F}_q$ and so we define elements of $U_{\SO_{2l}}$ that we need here.  Let $x\in\mathbb{F}_q.$ For $i=1,\dots,l-2$ we define $\mathrm{\bold{x}}_{\alpha_i}(x)=(u_{i,j})_{i,j=1}^{2l}\in U_{\SO_{2l}}$ where $u_{j,j}=1,$ for any $j=1,\dots,2l,$ $ u_{i,i+1}=x, u_{2l-i,2l-(i-1)}=-x,$ and $u_{i,j}=0$ for any other pair $(i,j).$ We also define
\begin{align*}
    \mathrm{\bold{x}}_{\alpha_{l-1}}(x)=\left(\begin{matrix} I_{l-1} & & \\
    & \left(\begin{matrix}
        1 & 0 & x &\frac{\rho\inv x^2}{2} \\
        0 & 1 & 0 & 0 \\
        0 & 0 & 1 & \rho\inv x \\
        0 & 0 & 0 & 1
    \end{matrix}\right) & \\
    & & I_{l-1}\end{matrix}\right).
\end{align*}
It is checked directly that $\mathrm{\bold{x}}_{\alpha}(x)\in U_\alpha,$ where $U_\alpha$ denotes the root subgroup of $\alpha,$ for any $\alpha\in\Delta(\SO_{2l}).$

We say that a Weyl element $w\in W(\SO_{2l})$ supports Bessel functions if for any $\alpha\in\Delta(\SO_{2l})$, $w\alpha$ is either negative or simple. We let B($\SO_{2l}$) denote the set of Weyl elements which support Bessel functions. Recall $B_{\SO_{2l}}=T_{\SO_{2l}} U_{\SO_{2l}}$ is a fixed Borel subgroup of $\SO_{2l}.$

\begin{lemma}\label{lemma Bessel support vanish}
Let $\pi$ be an irreducible $\psi$-generic representation of $\SO_{2l}$ with Bessel function $\mathcal{B}_{\pi,\psi}$. Then, for any $w\in W(\SO_{2l})\setminus\mathrm{B}(\SO_{2l}),$ we have $\mathcal{B}_{\pi,\psi}(g)=0,$ for any $g\in B_{\SO_{2l}} w B_{\SO_{2l}}.$
\end{lemma}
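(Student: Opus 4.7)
The plan is to combine the Bruhat decomposition with the transformation properties of $\mathcal{B}_{\pi,\psi}$ given by Proposition \ref{Besselprop} in order to reduce to showing $\mathcal{B}_{\pi,\psi}(tw) = 0$ for every $t \in T_{\SO_{2l}}$, where $w$ is a fixed representative in $N(T_{\SO_{2l}}) \subseteq \SO_{2l}$ of the given Weyl element. Given $g \in B_{\SO_{2l}} w B_{\SO_{2l}}$, I would write $g = b_1 w b_2 = (t_1 u_1) w (t_2 u_2)$ and then push $t_2$ past $w$ using $w T_{\SO_{2l}} w^{-1} = T_{\SO_{2l}}$ and $T_{\SO_{2l}}$-conjugation on $U_{\SO_{2l}}$, producing a factorization $g = u_1' (tw) u_2$ with $u_1', u_2 \in U_{\SO_{2l}}$ and $t \in T_{\SO_{2l}}$. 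Proposition \ref{Besselprop} then gives $\mathcal{B}_{\pi,\psi}(g) = \psi(u_1' u_2)\mathcal{B}_{\pi,\psi}(tw)$, so the claim boils down to $\mathcal{B}_{\pi,\psi}(tw) = 0$.

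Since $w \notin \mathrm{B}(\SO_{2l})$, there is a simple relative root $\alpha \in \Delta(\SO_{2l})$ such that $\beta := w\alpha$ is positive but not simple. The strategy is then to exploit the asymmetry between the root subgroup $U_\alpha$, on which $\psi$ is nontrivial, and the root subgroup $U_\beta$, on which $\psi$ is trivial. For the latter, I would read off from the explicit formula for $\psi$ on $U_{\SO_{2l}}$ in \S\ref{setup} that only the matrix entries in positions $(i,i+1)$ for $i \leq l-2$ and $(l-1,l+1)$ contribute, and these positions correspond precisely to the simple relative roots; hence $\psi|_{U_\beta} \equiv 1$ for $\beta$ positive non-simple. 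For the former, I would choose $x = \mathrm{\bold{x}}_\alpha(x_0) \in U_\alpha$ using the explicit generators in \S\ref{setup}, so that $\psi(\mathrm{\bold{x}}_{\alpha_i}(x_0)) = \psi(x_0)$ for $i \leq l-2$ and $\psi(\mathrm{\bold{x}}_{\alpha_{l-1}}(x_0)) = \psi(\tfrac{1}{2}\rho^{-1}x_0)$, either of which can be arranged to be nontrivial by choice of $x_0 \in \mathbb{F}_q$.

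The heart of the argument is then to evaluate $\mathcal{B}_{\pi,\psi}(twx)$ in two different ways. On the one hand, since $x \in U_{\SO_{2l}}$, Proposition \ref{Besselprop} yields
$$\mathcal{B}_{\pi,\psi}(twx) = \psi(x)\mathcal{B}_{\pi,\psi}(tw).$$
On the other hand, writing $twx = (twxw^{-1}t^{-1}) \cdot tw$ and using that conjugation by $w$ carries $U_\alpha$ onto $U_\beta$ while conjugation by $t$ preserves each root subgroup, the element $twxw^{-1}t^{-1}$ lies in $U_\beta \subseteq U_{\SO_{2l}}$, and Proposition \ref{Besselprop} gives
$$\mathcal{B}_{\pi,\psi}(twx) = \psi(twxw^{-1}t^{-1})\mathcal{B}_{\pi,\psi}(tw) = \mathcal{B}_{\pi,\psi}(tw),$$
since $\psi|_{U_\beta}$ is trivial. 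Comparing the two expressions gives $(\psi(x) - 1)\mathcal{B}_{\pi,\psi}(tw) = 0$, and $\psi(x) \neq 1$ forces $\mathcal{B}_{\pi,\psi}(tw) = 0$ as desired.

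The main obstacle I anticipate is that, in the quasi-split non-split setting, the relative root subgroups (most notably $U_{\alpha_{l-1}}$) are neither one-dimensional nor necessarily abelian, so the two crucial facts — that $\psi$ is nontrivial on $U_\alpha$ for every simple $\alpha$, and that conjugation by $w$ maps $U_\alpha$ into $U_{\SO_{2l}}$ when $w\alpha$ is positive with $\psi$-trivial image when moreover $w\alpha$ is not simple — need to be verified at the level of matrix entries using the explicit generators $\mathrm{\bold{x}}_\alpha(x)$, rather than by a formal root-datum argument as in the split case.
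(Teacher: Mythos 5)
Your proof follows the same route as the paper: reduce to $\mathcal{B}_{\pi,\psi}(tw)=0$ via the Bruhat decomposition and Proposition \ref{Besselprop}, pick a simple $\alpha$ with $w\alpha$ positive non-simple, and compare the two evaluations of $\mathcal{B}_{\pi,\psi}(tw\,\mathrm{\bold{x}}_\alpha(x))$ coming from right-multiplication by $\mathrm{\bold{x}}_\alpha(x)\in U_\alpha$ versus left-multiplication by the conjugate in $U_{w\alpha}$, where $\psi$ is trivial. One small computational slip: $\psi(\mathrm{\bold{x}}_{\alpha_{l-1}}(x_0))=\psi(\tfrac{1}{2}x_0)$, not $\psi(\tfrac{1}{2}\rho^{-1}x_0)$, since the $(l-1,l+1)$ entry of $\mathrm{\bold{x}}_{\alpha_{l-1}}(x_0)$ is $x_0$ and the $\rho^{-1}$ appears only in the $(l-1,l+2)$ and $(l+1,l+2)$ entries, which $\psi$ does not read; this does not affect the argument.
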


\begin{proof}
Since $B_{\SO_{2l}}=T_{\SO_{2l}}U_{\SO_{2l}}=U_{\SO_{2l}}T_{\SO_{2l}}$ as a set,  $$\mathcal{B}_{\pi,\psi}(g)=\mathcal{B}_{\pi,\psi}(t_1u_1wt_2u_2)=\mathcal{B}_{\pi,\psi}(u_1^\prime t_1^\prime w t_2 u_2)$$ for some $u_1,u_1^\prime,u_2\in U_{\SO_{2l}}$ and $t_1,t_1^\prime, t_2\in T_{\SO_{2l}}.$ By Proposition \ref{Besselprop}, we find $\mathcal{B}_{\pi,\psi}(u_1^\prime t_1^\prime w t_2 u_2)=\psi(u_1^\prime u_2)\mathcal{B}_{\pi,\psi}(t_1^\prime w t_2)$ and hence it is enough to show that $\mathcal{B}_{\pi,\psi}(t_1 w t_2)=0$ for any $t_1,t_2\in T_{\SO_{2l}}.$ Since $W(\SO_{2l})=N_{\SO_{2l}}(T_{\SO_{2l}})/T_{\SO_{2l}}$, $wt_2=t_2^\prime w$ for some $t_2^\prime \in T_{\SO_{2l}}.$ Hence, $\mathcal{B}_{\pi,\psi}(t_1 w t_2)=\mathcal{B}_{\pi,\psi}(t_1 t_2^\prime w)$ and so it is enough to show that $\mathcal{B}_{\pi,\psi}(t w)=0$ for any $t\in T_{\SO_{2l}}.$

Now, since $w\notin \mathrm{B}(\SO_{2l}),$ there exists $\alpha\in\Delta(\SO_{2l})$ such that $w\alpha$ is positive but not simple. Let $x\in\mathbb{F}_q$. Then $tw\mathrm{\bold{x}}_\alpha(x) = u'tw$ for some $u'$ in the root subgroup $U_{w\alpha}$. Now, $\psi(\mathrm{\bold{x}}_\alpha(x))$ is a nonzero constant multiple of $\psi(x)$ and this constant is independent of $x$. Since $w\alpha$ is positive but not simple, $\psi(u')=\psi(0)=1.$ By Proposition \ref{Besselprop}, there exists a nonzero constant $c\in\C$ such that $c\psi(x)\mathcal{B}_{\pi,\psi}(t w)=\mathcal{B}_{\pi,\psi}(t w)$ for any $x\in\mathbb{F}_q$. Since $\psi$ is nontrivial, we must have $\mathcal{B}_{\pi,\psi}(t w)=0.$
\end{proof}

The statement of the following lemma is similar to the split case (see \cite[Lemma 4.3]{HL22a}); however, the proof requires more effort here. In the split case, the proof relied on the fact that the intersection of the kernels of the simple roots is precisely the center. This is true in general for split groups, but is not guaranteed for quasi-split groups. To overcome this obstacle, we examine the action of the non-split torus on $\mathrm{\bold{x}}_\alpha(x)$.

\begin{lemma}\label{center}
If $\B_{\pi,\psi}(t)\neq 0$ for some $t\in T_{\SO_{2l}},$ then $t$ is in the center of $\SO_{2l}.$
\end{lemma}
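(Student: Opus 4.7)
The plan is to translate the non-vanishing hypothesis into a character identity on $U_{\SO_{2l}}$ and then extract the coordinates of $t$ by substituting elements of the simple root subgroups. From Proposition \ref{Besselprop}, $\B_{\pi,\psi}(tu) = \psi(u)\B_{\pi,\psi}(t)$ and $\B_{\pi,\psi}(tu) = \B_{\pi,\psi}((tut^{-1})t) = \psi(tut^{-1})\B_{\pi,\psi}(t)$, so the hypothesis forces $\psi(u) = \psi(tut^{-1})$ for every $u \in U_{\SO_{2l}}$. Write $t = \mathrm{diag}(t_1, \dots, t_{l-1}, \tilde t, t_{l-1}^{-1}, \dots, t_1^{-1})$ with $\tilde t = \bigl(\begin{smallmatrix} a & b\rho \\ b & a \end{smallmatrix}\bigr)$ and $a^2 - b^2 \rho = 1$. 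Substituting $u = \mathrm{\bold{x}}_{\alpha_i}(x)$ for $1 \le i \le l-2$: since $\tilde t$ does not mix the relevant rows or columns, conjugation is diagonal with scaling $t_i t_{i+1}^{-1}$ on the $(i, i+1)$-entry, and comparing characters yields $t_1 = t_2 = \cdots = t_{l-1}$.

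The substance is the short simple root $\alpha_{l-1}$: its root subgroup $U_{\alpha_{l-1}}$ is two-dimensional over $\mathbb{F}_q$ (reflecting the merger of the absolute roots $e_{l-1} \pm e_l$ under the quadratic Galois action), while $\mathrm{\bold{x}}_{\alpha_{l-1}}(x)$ only parameterizes a one-dimensional subfamily. I would first substitute $u = \mathrm{\bold{x}}_{\alpha_{l-1}}(x)$: tracking how $\tilde t$ mixes rows and columns $l, l+1$, the $(l-1, l+1)$-entry of $tut^{-1}$ comes out to $a t_{l-1} x$, giving the single equation $a t_{l-1} = 1$. To constrain $b$, I would introduce a complementary element $\mathrm{\bold{y}}_{\alpha_{l-1}}(y) \in U_{\alpha_{l-1}}$ whose $(l-1, l+1)$-entry vanishes but whose $(l-1, l)$-entry is $-y$ --- concretely, the element whose middle $4 \times 4$ block is $\bigl(\begin{smallmatrix} 1 & -y & 0 & -y^2/2 \\ 0 & 1 & 0 & y \\ 0 & 0 & 1 & 0 \\ 0 & 0 & 0 & 1 \end{smallmatrix}\bigr)$, which one verifies preserves $J_{2l, \rho}$ directly. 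Now $\psi(\mathrm{\bold{y}}_{\alpha_{l-1}}(y)) = 1$, while conjugating by $t$ causes the off-diagonal entry $-b\rho$ of $\tilde t^{-1}$ to couple the $(l-1, l)$-entry into the $(l-1, l+1)$-slot of $tut^{-1}$, producing $b\rho t_{l-1} y$; demanding the character be trivial for all $y \in \mathbb{F}_q$ forces $b = 0$.

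Combined with $a^2 - b^2 \rho = 1$, this gives $a = \pm 1$, hence $t_{l-1} = \pm 1$ from $a t_{l-1} = 1$, and so all $t_i$ equal a common sign $\varepsilon \in \{\pm 1\}$. Therefore $t = \varepsilon I_{2l}$, which lies in the center of $\SO_{2l}$. The hard part is the second paragraph --- locating the complementary element in the two-dimensional root subgroup $U_{\alpha_{l-1}}$, verifying it lies in $\SO_{2l}$ with respect to the form $J_{2l, \rho}$, and correctly tracking how the off-diagonal entries of $\tilde t$ feed the unipotent data into the generic character. This is the ``more careful analysis'' flagged in the paragraph immediately preceding the lemma.
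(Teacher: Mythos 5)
Your proof is correct. Both you and the paper handle the roots $\alpha_1,\dots,\alpha_{l-2}$ identically, and both extract $at_{l-1}=1$ by computing $t\,\mathrm{\bold{x}}_{\alpha_{l-1}}(x)t^{-1}$ and reading off the $(l-1,l+1)$-entry. Where you diverge is the final constraint: the paper conjugates the same element $\mathrm{\bold{x}}_{\alpha_{l-1}}(x)$ in the opposite direction, computing $t^{-1}\mathrm{\bold{x}}_{\alpha_{l-1}}(x)t$ to obtain $at_{l-1}^{-1}=1$ as well; combined with $at_{l-1}=1$ this forces $t_{l-1}=t_{l-1}^{-1}=a=\pm1$, and then $b=0$ drops out of the relation $a^2-b^2\rho=1$. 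You instead introduce a second generator $\mathrm{\bold{y}}_{\alpha_{l-1}}(y)$ of the two-dimensional root subgroup $U_{\alpha_{l-1}}$ on which $\psi$ is trivial, compute its conjugate by $t$, and read off $b\rho t_{l-1}=0$ directly, then recover $a=\pm1$ from $a^2-b^2\rho=1$ and $t_{l-1}=a$ from $at_{l-1}=1$. Your computation of the conjugate and your verification that $\mathrm{\bold{y}}_{\alpha_{l-1}}(y)$ preserves $J_{2l,\rho}$ both check out. The trade-off: the paper's trick is shorter (no new element to verify), while yours makes the two-dimensionality of $U_{\alpha_{l-1}}$ explicit and isolates the constraint $b=0$ from a cleanly chosen element rather than as a consequence of the norm equation — which is arguably more transparent about where the ``more careful analysis'' for the non-split torus is actually happening.
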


\begin{proof}
Let $t\in T_{\SO_{2l}}$ such that $\B_{\pi,\psi}(t)\neq 0$ and let $\delta\in\Delta(\SO_{2l})\setminus\{\alpha_{l-1}\}.$ For $x\in\mathbb{F}_q$, let $\bold{x}_\delta(x)$ be in the root subgroup of $\delta$. Then $t \bold{x}_\delta(x) = \bold{x}_\delta(\delta(t) x) t.$ Hence, by Proposition \ref{Besselprop}, $$
\psi(\bold{x}_\delta(x))\B_{\pi,\psi}(t) = \psi(\bold{x}_\delta(\delta(t)x))\B_{\pi,\psi}(t).
$$
Thus, $\psi(\bold{x}_\delta(x))= \psi(\bold{x}_\delta(\delta(t)x))$ for any $x\in\mathbb{F}_q$. Since $\psi$ is nontrivial and $x$ is arbitrary, we must have $\delta(t)=1$ for all $\delta\in\Delta(\SO_{2l})\setminus\{\alpha_{l-1}\}.$ Write
$$
t=\mathrm{diag}(t_1,\dots,t_{l-1},\left(\begin{matrix}
a & b\rho \\
b & a
\end{matrix}\right),t_{l-1}\inv,\dots,t_1\inv)
$$
where $a^2-b^2\rho=1.$ Since $\delta(t)=1$ for all $\delta\in\Delta(\SO_{2l})\setminus\{\alpha_{l-1}\},$ we must have $t_1=\cdots=t_{l-1}.$

Next, we examine the root subgroup for $\delta=\alpha_{l-1}.$ We verify directly that 
\begin{align*}
\psi(\bold{x}_\delta(x))\B_{\pi,\psi}(t) = \B_{\pi,\psi}(t \bold{x}_\delta(x)) = \B_{\pi,\psi}(t \bold{x}_\delta(x) t\inv t) &= \B_{\pi,\psi}(t)\psi(t \bold{x}_\delta(x) t\inv) \\ &= \psi(\bold{x}_\delta(at_{l-1}x))\B_{\pi,\psi}(t).
\end{align*}
Similar to the previous case, we deduce that $a=t_{l-1}\inv.$ 
On the other hand, we verify directly
\begin{align*}
\psi(\bold{x}_\delta(x))\B_{\pi,\psi}(t) = \B_{\pi,\psi}(\bold{x}_\delta(x) t) = \B_{\pi,\psi}( t t\inv  \bold{x}_\delta(x)  t) &= \B_{\pi,\psi}(t)\psi(t\inv \bold{x}_\delta(x) t) \\ &= \psi(\bold{x}_\delta(at_{l-1}\inv x))\B_{\pi,\psi}(t).
\end{align*}
Hence $a=t_{l-1}.$ Since $a=t_{l-1}=t_{l-1}\inv$, we have $t_1=t_2=\cdots=t_{l-1}=a=\pm 1$ and $b=0$ which proves the lemma.
\end{proof}

\subsection{Conjugate Bessel Function}

We introduce the Bessel function for $\pi^c$ here.
We have $cU_{\SO_{2l}}c=U_{\SO_{2l}}.$ Hence we define another character of $U_{\SO_{2l}}$ by $\psi_c(u):=\psi(cuc).$ That is, 
$$
\psi_c(u)= \psi\left(\sum_{i=1}^{l-2} u_{i,i+1}-\frac{1}{2}u_{l-1,l+1}  \right),
$$
for any $u=(u_{i,j})_{i,j=1}^{2l}\in U_{\SO_{2l}}.$ Fix $\tilde{t}:=\mathrm{diag}(I_{l-1},-1,-1,I_{l-1})\in T_{\SO_{2l}}.$ It is checked directly that $\psi_c(\tilde{t}\inv u\tilde{t})=\psi(u).$ Recall that we fixed a nonzero Whittaker functional $\Gamma\in\mathrm{Hom}_{U_{\SO_{2l}}}(\pi,\psi).$ Then $\Gamma\in\mathrm{Hom}_{U_{\SO_{2l}}}(\pi^c,\psi_c)$ and $$\mathcal{B}_{\pi^c,\psi_c}(g):=\frac{\Gamma(\pi^c(g)v_0)}{\Gamma(v_0)}=\mathcal{B}_{\pi,\psi}(cgc)$$ defines the normalized Bessel function for $\pi^c$ in $\mathrm{Ind}_{U_{\SO_{2l}}}^{\SO_{2l}}\psi_c$. On the other hand, $\pi^c$ is also $\psi$-generic, since $\psi_c(\tilde{t}\inv u\tilde{t})=\psi(u)$ for any $u\in U_{\SO_{2l}}.$
Let
$$
\mathcal{B}_{\pi^c, \psi}(g):=\mathcal{B}_{\pi^c, \psi_c}(\tilde{t}\inv g \tilde{t})=
\mathcal{B}_{\pi, \psi}(c\tilde{t}\inv g \tilde{t}c).
$$
The notation is justified by the following proposition. 

\begin{prop}\label{Prop Besselconj}
$\mathcal{B}_{\pi^c, \psi}$ is the normalized Bessel function for $\pi^c$ in $\mathrm{Ind}_{U_{\SO_{2l}}}^{\SO_{2l}}\psi.$
\end{prop}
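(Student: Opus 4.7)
The plan is to exhibit $\B_{\pi^c,\psi}$ as the normalized Bessel function associated to an explicit $\psi$-Whittaker functional and $\psi$-Whittaker vector for $\pi^c$. By uniqueness of Whittaker models, a Bessel function for $\pi^c$ inside $\mathrm{Ind}_{U_{\SO_{2l}}}^{\SO_{2l}}\psi$ is determined by its value at $I_{2l}$, so it will suffice to check that this explicit construction yields $\B_{\pi^c,\psi}$ and is normalized at the identity. The arithmetic is driven entirely by three facts already in the excerpt: $\pi^c(g)=\pi(cgc)$, the conjugation identity $\psi_c(\tilde{t}\inv u\tilde{t})=\psi(u)$, and $cU_{\SO_{2l}}c=U_{\SO_{2l}}$.

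First, I would define a candidate $\psi$-Whittaker functional by $\Gamma'(v) := \Gamma(\pi^c(\tilde{t}\inv)v)$. Using that $\Gamma\in\mathrm{Hom}_{U_{\SO_{2l}}}(\pi^c,\psi_c)$ (already observed in the excerpt), one finds
\begin{align*}
\Gamma'(\pi^c(u)v) &= \Gamma\bigl(\pi^c(\tilde{t}\inv u\tilde{t})\pi^c(\tilde{t}\inv)v\bigr) \\
&= \psi_c(\tilde{t}\inv u\tilde{t})\,\Gamma'(v) = \psi(u)\,\Gamma'(v),
\end{align*}
so $\Gamma'\in\mathrm{Hom}_{U_{\SO_{2l}}}(\pi^c,\psi)$ and is visibly nonzero.

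Next, I would define a candidate $\psi$-Whittaker vector by $v_0' := \pi^c(\tilde{t})v_0$, where $v_0$ is the Whittaker vector used in the construction of $\B_{\pi,\psi}$. Since $\tilde{t}\in T_{\SO_{2l}}$ normalizes $U_{\SO_{2l}}$ and so does $c$, the element $c\tilde{t}\inv u\tilde{t}c$ lies in $U_{\SO_{2l}}$ and a direct computation gives
\begin{align*}
\pi^c(u)v_0' = \pi^c(\tilde{t})\pi^c(\tilde{t}\inv u\tilde{t})v_0 = \psi_c(\tilde{t}\inv u\tilde{t})\,v_0' = \psi(u)\,v_0',
\end{align*}
so $v_0'$ is indeed a $\psi$-Whittaker vector for $\pi^c$.

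Finally, assembling the two pieces,
\begin{align*}
\frac{\Gamma'(\pi^c(g)v_0')}{\Gamma'(v_0')} = \frac{\Gamma(\pi^c(\tilde{t}\inv g\tilde{t})v_0)}{\Gamma(v_0)} = \B_{\pi^c,\psi_c}(\tilde{t}\inv g\tilde{t}) = \B_{\pi^c,\psi}(g),
\end{align*}
which is exactly the function defined in the statement; normalization at $g=I_{2l}$ follows since $c\tilde{t}\inv\tilde{t}c = I_{2l}$ and $\B_{\pi,\psi}(I_{2l}) = 1$. No serious obstacle is expected: the whole argument is a careful bookkeeping exercise that tracks the twist by $\tilde{t}$ which converts the natural $\psi_c$-genericity of $\pi^c$ into $\psi$-genericity.
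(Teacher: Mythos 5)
Your proof is correct and follows the natural direct route: build an explicit $\psi$-Whittaker functional $\Gamma'$ and $\psi$-Whittaker vector $v_0'$ for $\pi^c$ by conjugating the $\psi_c$-ones through $\tilde{t}$, verify the equivariance using $\psi_c(\tilde{t}\inv u\tilde{t})=\psi(u)$, and observe that the resulting ratio is exactly $\B_{\pi^c,\psi_c}(\tilde{t}\inv g\tilde{t})=\B_{\pi^c,\psi}(g)$ with value $1$ at the identity; the only implicit ingredient you use beyond this bookkeeping is the uniqueness (up to scalar) of Whittaker functionals and Whittaker vectors, which makes the ratio $\Gamma'(\pi^c(g)v_0')/\Gamma'(v_0')$ independent of the choices and hence \emph{the} normalized Bessel function. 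The paper simply refers to the analogous statement in the split case (\cite[Proposition 4.4]{HL22a}), and the argument there is the same kind of conjugation-by-$\tilde{t}$ computation, so you have reproduced essentially the intended proof.
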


\begin{proof}
The proof is similar to \cite[Proposition 4.4]{HL22a}. 
\end{proof}

\subsection{Partition of the Bessel Support}

The maximal standard parabolic subgroups of  $\SO_{2l}$ are $P_n=M_n N_n$ for $n\leq l-1$ where $M_n$ is the standard Levi subgroup that is isomorphic to $\GL_n\times\SO_{2(l-n)}$. Let $w_{long}$ be the long Weyl element of $\SO_{2l}$ and $w_{M_n}$ be the long Weyl element in $M_n.$ Define $\tilde{w}_n=w_{long}\inv w_{M_n}.$ The analogues of these elements played an essential role in the arguments for the other classical groups \cite{HL22a, LZ22a} and they continue to do so in our case. To be explicit, we have
$$
w_{long}=\left(\begin{matrix}
 & & J_{l-1} \\
 & \left(\begin{matrix}
 -1 & 0 \\
 0 & 1 
 \end{matrix}\right)^{l-1} & \\
 J_{l-1} & &
\end{matrix}\right), 
w_{M_n}=\left(\begin{matrix}
J_n & &  \\
 & w_{long,2(l-n)} & \\
  & & J_n
\end{matrix}\right),$$
where $w_{long,2(l-n)}$ is the long Weyl element of $\SO_{2(l-n)}.$
Hence,
$$
\tilde{w}_n=\left(\begin{matrix}
& & & & I_n \\
 & I_{l-n-1} &  & & \\
 & & \left(\begin{matrix}
 -1 & 0 \\
 0 & 1 
 \end{matrix}\right)^{n} & & & \\
 & & & I_{l-n-1} &  \\
 I_n & & & &
\end{matrix}\right).
$$
Recall that the definition of the intertwining operator $M(\tau,\psi\inv)$ involved the Weyl element $w_n$ of $\SO_{2n+1}.$ The image of $w_n$ under the embedding of $\SO_{2n+1}$ into $\SO_{2l}$ and conjugated by $w^{l,n}$ is $\tilde{w}_n.$ That is, $w^{l,n} w_n (w^{l,n})^{-1}=\tilde{w}_n.$

Let $t$ be in the split part of the torus and write
$$
t=\mathrm{diag}(t_1,\dots,t_{l-1},I_2,t_{l-1}\inv,\dots,t_1\inv).
$$ 
So 
$
\tilde{w}_n\inv t \tilde{w}_n=\mathrm{diag}(t_n\inv,\dots,t_1\inv,t_{n+1},\dots,t_{l-1},I_2,t_{l-1}\inv,\dots,t_{n+1}\inv,t_1,\dots,t_n).
$
Thus, we have
$$\tilde{w}_n\alpha_i = \alpha_{n-i} \, \, \mathrm{for} \, \, 1\leq i \leq n-1,$$
$$\tilde{w}_n\alpha_n (t) = t_1\inv t_{n+1}\inv,$$ 
$$\tilde{w}_n\alpha_i=\alpha_i \, \, \mathrm{for} \, \, n+1\leq i \leq l-1.$$ 

Let $\theta_w=\{\alpha\in\Delta(\SO_{2l}) \, | \, w \alpha\in\Phi^+(\SO_{2l})\}.$ The assignment $w\mapsto \theta_w$ gives a bijection from $\mathrm{B}(\SO_{2l})$ to the power set of $\Delta(\SO_{2l})$, which we denote by $\mathcal{P}(\Delta(\SO_{2l}))$. Then, for $n\leq l-1,$ $\theta_{\tilde{w}_n}=\Delta(\SO_{2l})\setminus \{\alpha_n\}$.

For $n\leq l-1,$ let $\mathrm{B}_n(\SO_{2l})$ be the set of $w\in \mathrm{B}(\SO_{2l})$ such that there exists $w'\in W(\GL_n)$ such that $w=t_n(w')\tilde{w}_n.$ We let $\mathrm{B}_0(\SO_{2l})$ be the identity. Also, let 
$$
P_n=\{ \theta\subseteq\Delta(\SO_{2l}) \, | \, w_\theta\in \mathrm{B}_n(\SO_{2l})\}.
$$

\begin{prop}\label{Besselprtnon}
$$P_n=\{ \theta\subseteq\Delta(\SO_{2l}) \, | \, \{\alpha_{n+1},\dots,\alpha_{l-1}\} \subseteq \theta \subseteq \Delta(\SO_{2l})\setminus \{\alpha_n\}\}.$$
\end{prop}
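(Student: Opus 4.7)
The plan is to compute $w\alpha$ directly for $w = t_n(w')\tilde{w}_n$ and each simple root $\alpha \in \Delta(\SO_{2l})$, using the formulas for $\tilde{w}_n$ on simple roots already recorded in the excerpt, together with the observation that $t_n(w')$ lies in the Weyl group of the $\GL_n$ factor of the Levi subgroup $M_n \cong \GL_n \times \SO_{2(l-n)}$ and hence acts only on the torus coordinates $t_1, \ldots, t_n$, fixing $t_{n+1}, \ldots, t_{l-1}$ and the central $\SO_2$ block.

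I would split the analysis into three cases according to the location of $\alpha$. For $n+1 \le i \le l-1$, the relation $\tilde{w}_n \alpha_i = \alpha_i$ places the root in the $\SO_{2(l-n)}$ factor where $t_n(w')$ acts trivially; thus $w\alpha_i = \alpha_i$ is always simple, so $\{\alpha_{n+1},\dots,\alpha_{l-1}\} \subseteq \theta_w$ unconditionally. For $\alpha_n$, the identity $\tilde{w}_n \alpha_n(t) = t_1^{-1} t_{n+1}^{-1}$ exhibits $\tilde{w}_n \alpha_n$ as a negative root; applying $t_n(w')$ produces a character of the form $t_j^{-1} t_{n+1}^{-1}$ for some $j \in \{1,\ldots,n\}$ (since the $t_{n+1}^{-1}$ factor is untouched), which remains negative, whence $\alpha_n \notin \theta_w$. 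For $1 \le i \le n-1$, $w\alpha_i = t_n(w')\alpha_{n-i}$ is a root supported on $t_1, \ldots, t_n$ and therefore a root of the $\GL_n$ factor; it is positive (resp.\ simple) as a root of $\SO_{2l}$ if and only if it is positive (resp.\ simple) as a root of $\GL_n$. Consequently, the condition $w \in \mathrm{B}(\SO_{2l})$ becomes the condition $w' \in \mathrm{B}(\GL_n)$, and under this identification $\alpha_i \in \theta_w$ if and only if $\alpha_{n-i} \in \theta_{w'}$.

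To conclude, I would invoke the analogous bijection $\mathrm{B}(\GL_n) \to \mathcal{P}(\Delta(\GL_n))$, which holds by the same argument for any split reductive group: as $w'$ varies over $\mathrm{B}(\GL_n)$, the intersection $\theta_w \cap \{\alpha_1,\dots,\alpha_{n-1}\}$ varies over every subset of $\{\alpha_1,\dots,\alpha_{n-1}\}$. Combining this with the first two cases yields exactly the description of $P_n$ stated in the proposition. The main (mild) obstacle is ensuring uniform negativity of $w\alpha_n$ across all admissible $w'$; the key observation resolving it is that the coefficient on $t_{n+1}$ lies outside the range of coordinates on which $t_n(w')$ can act nontrivially, so its sign is automatically preserved.
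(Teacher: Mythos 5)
Your proof is correct and follows the same approach as the paper's: both compute $w\alpha_i = t_n(w')\tilde{w}_n\alpha_i$ in the three ranges $i > n$, $i = n$, $i < n$, and both rest on the observation that $t_n(w')$ permutes only $t_1^{-1},\dots,t_n^{-1}$ so the $t_{n+1}^{-1}$ factor in $w\alpha_n$ survives untouched. The only difference is that you spell out the reverse inclusion via the $\GL_n$ bijection $\mathrm{B}(\GL_n)\to\mathcal{P}(\Delta(\GL_n))$, whereas the paper dismisses that direction as ``straightforward to check.''
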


\begin{proof}
This is implicitly proved in \cite[\S 5, 7]{LZ22a}. Indeed, we do not need to refer to the matrices explicitly. If $w=t_n(w')\tilde{w}_n$ for some $w'\in W(\GL_n),$ then $t_n(w')\alpha_i=\alpha_i$ for $i\geq n+1$. Also, $t_n(w')$ acts on the first $n$ coordinates of $\tilde{w}_n\inv t\tilde{w}_n$ by permuting the $t_i\inv$'s for $i\leq n.$ Hence $w\alpha_n(t)=t_i\inv t_{n+1}\inv$ for some $i\leq n$ and is hence negative. Thus,
$$
P_n\subseteq\{ \theta\subseteq\Delta(\SO_{2l}) \, | \, \{\alpha_{n+1},\dots,\alpha_{l-1}\} \subseteq \theta \subseteq \Delta(\SO_{2l})\setminus \{\alpha_n\}\}.$$
The reverse inclusion is straightforward to check.
\end{proof}

\begin{cor}\label{Besselpartnonsplit} The sets $\mathrm{B}_n(\SO_{2l})$ for $n\leq l-1$ form a partition of the Bessel support $\mathrm{B}(\SO_{2l}).$
\end{cor}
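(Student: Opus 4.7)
The plan is to transfer the partition question from $\mathrm{B}(\SO_{2l})$ to $\mathcal{P}(\Delta(\SO_{2l}))$ via the bijection $w \mapsto \theta_w$ recalled in the excerpt, so that the corollary reduces to showing the sets $P_n$, $0 \leq n \leq l-1$, partition $\mathcal{P}(\Delta(\SO_{2l}))$. Here I interpret $P_0 = \{\Delta(\SO_{2l})\}$, corresponding to $\mathrm{B}_0(\SO_{2l})$ being the singleton $\{1\}$, since the identity is the unique Weyl element sending every simple root to a positive root.

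The key input is Proposition \ref{Besselprtnon}, which for $n \geq 1$ identifies $P_n$ with the collection of $\theta \subseteq \Delta(\SO_{2l})$ satisfying $\{\alpha_{n+1}, \ldots, \alpha_{l-1}\} \subseteq \theta$ and $\alpha_n \notin \theta$. The conceptual observation is that this description indexes each such $\theta$ by the largest index of a simple root \emph{missing} from $\theta$. For disjointness, I would argue that if $\theta$ belonged to both $P_n$ and $P_m$ with $1 \leq n < m$, then membership in $P_m$ forces $\alpha_m \notin \theta$, while membership in $P_n$ forces $\alpha_m \in \{\alpha_{n+1}, \ldots, \alpha_{l-1}\} \subseteq \theta$, a contradiction; the case of $P_0$ is immediate since the unique element $\Delta(\SO_{2l})$ of $P_0$ omits no simple roots and therefore cannot satisfy the condition $\alpha_n \notin \theta$ for any $n \geq 1$.

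For the covering property, the plan is a simple dichotomy on $\theta \subseteq \Delta(\SO_{2l})$: if $\theta = \Delta(\SO_{2l})$ place it in $P_0$; otherwise let $n$ be the largest index with $\alpha_n \notin \theta$, so that automatically $\{\alpha_{n+1}, \ldots, \alpha_{l-1}\} \subseteq \theta$, and read off from Proposition \ref{Besselprtnon} that $\theta \in P_n$. Since the whole argument is elementary combinatorics once Proposition \ref{Besselprtnon} is in hand, I do not anticipate any serious obstacle; the main step is simply recognizing that Proposition \ref{Besselprtnon} classifies the elements of $\mathrm{B}(\SO_{2l})$ by the position of the largest ``gap'' among the simple roots they send to positive roots.
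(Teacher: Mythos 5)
Your proposal is correct and follows essentially the same route as the paper: transfer the question via the bijection $w \mapsto \theta_w$ and invoke Proposition \ref{Besselprtnon} to show the $P_n$ partition $\mathcal{P}(\Delta(\SO_{2l}))$. The only difference is that you spell out the disjointness and covering arguments explicitly (indexing each subset by the largest missing simple root), whereas the paper leaves that combinatorial check implicit.
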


\begin{proof}
Again, this is implicit in \cite[\S 5, 7]{LZ22a}. The map $w\mapsto\theta_w$ gives an bijection from $\mathrm{B}(\SO_{2l})$ to $\mathcal{P}(\Delta(\SO_{2l}))$. By Proposition \ref{Besselprtnon}, the sets $P_n$ such that $n\leq l-1$ form a partition of $\mathcal{P}(\Delta(\SO_{2l})).$ From the bijection $w\mapsto\theta_w$, we have that the sets $\mathrm{B}_n(\SO_{2l})$ for $n\leq l-1$ form a partition of the Bessel support $\mathrm{B}(\SO_{2l}).$
\end{proof}

The following proposition is used later to show that the zeta integrals are nonzero.

\begin{prop}\label{uppertriangular}
If $w\in W(\GL_{l-1})$ and $w\neq I_{l-1}$, then $t_{l-1}(w)\notin \mathrm{B}(\SO_{2l}).$ In particular, if $a\in\GL_{l-1}$ and $a$ is not upper triangular, then $\B_{\pi,\psi}(t_{l-1}(a))=0.$
\end{prop}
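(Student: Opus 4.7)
The plan is to first prove $t_{l-1}(w) \notin \mathrm{B}(\SO_{2l})$ by examining the action of $t_{l-1}(w)$ on the simple roots of $\SO_{2l}$, and then derive the vanishing of $\B_{\pi,\psi}(t_{l-1}(a))$ from the Bruhat decomposition of $\GL_{l-1}$ combined with Lemma \ref{lemma Bessel support vanish}.

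For the first claim, let $\sigma \in S_{l-1}$ denote the permutation corresponding to $w$. Since $t_{l-1}(w) = \mathrm{diag}(w, I_2, w^*)$ sits inside the Levi $\GL_{l-1} \times \SO_2$ of $P_{l-1}$, conjugation by $t_{l-1}(w)$ permutes the split torus coordinates $(t_1, \ldots, t_{l-1})$ according to $\sigma$ and fixes the $\SO_2$ block. Using $\alpha_i(t) = t_i t_{i+1}^{-1}$ for $i \leq l-2$ and $\alpha_{l-1}(t) = t_{l-1}$, a direct computation yields
\begin{align*}
t_{l-1}(w) \cdot \alpha_i &= t_{\sigma(i)} t_{\sigma(i+1)}^{-1} \quad (1 \leq i \leq l-2), \\
t_{l-1}(w) \cdot \alpha_{l-1} &= t_{\sigma(l-1)}.
\end{align*}
Among the positive roots here (of type $B_{l-1}$), $t_k t_{k'}^{-1}$ with $k < k'$ is simple iff $k' = k+1$, and the only simple root of the form $t_k$ is $\alpha_{l-1} = t_{l-1}$. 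Hence if $\sigma(l-1) \neq l-1$, then $t_{l-1}(w) \cdot \alpha_{l-1}$ is positive but not simple, and $t_{l-1}(w) \notin \mathrm{B}(\SO_{2l})$.

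If instead $\sigma(l-1) = l-1$ with $\sigma \neq e$, I will exhibit some $i \leq l-2$ with $\sigma(i) < \sigma(i+1)$ and $\sigma(i+1) \geq \sigma(i) + 2$, so that $t_{l-1}(w) \cdot \alpha_i$ is again positive but not simple. If no such $i$ existed, then for every $i \leq l-2$ the permutation either descends or ascends by exactly $1$. A backward induction rules this out: $\sigma(l-2) \leq l-2 < \sigma(l-1)$ is an ascent, so the no-gap hypothesis forces $\sigma(l-2) = l-2$; iterating, $\sigma(i) = i$ for all $i$, contradicting $\sigma \neq e$.

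Finally, for the ``in particular'' statement, the Bruhat decomposition of $\GL_{l-1}$ places $a$ in $B_{\GL_{l-1}} w B_{\GL_{l-1}}$ for some $w \neq I_{l-1}$, since $a$ is not upper triangular. The embedding $t_{l-1}$ sends $B_{\GL_{l-1}}$ into $B_{\SO_{2l}}$, so $t_{l-1}(a) \in B_{\SO_{2l}} t_{l-1}(w) B_{\SO_{2l}}$, and Lemma \ref{lemma Bessel support vanish} combined with the first part gives $\B_{\pi,\psi}(t_{l-1}(a)) = 0$. The only nontrivial step is the combinatorial backward induction for the case $\sigma(l-1) = l-1$; the rest is standard root-system bookkeeping plus an application of the Bruhat decomposition.
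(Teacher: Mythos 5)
Your proof is correct, but it takes a genuinely different route from the paper. The paper deduces the first claim from the partition of the Bessel support (Corollary \ref{Besselpartnonsplit}): since every nonidentity element of $\mathrm{B}(\SO_{2l})$ has the form $t_n(w')\tilde{w}_n$ with $1 \leq n \leq l-1$, if $t_{l-1}(w)$ lay in $\mathrm{B}(\SO_{2l})$ then $\tilde{w}_n = t_n(w')^{-1}t_{l-1}(w)$ would lie in the embedded copy of $W(\GL_{l-1})$, which contradicts the explicit block form of $\tilde{w}_n$. You instead work directly from the definition of $\mathrm{B}(\SO_{2l})$: you compute the action of $t_{l-1}(w)$ on the simple roots of the restricted root system (type $B_{l-1}$) and produce, for any $\sigma \neq e$, a simple root that is sent to a positive nonsimple root, via a short backward-induction argument on the permutation. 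Your approach is more self-contained — it avoids invoking the $\mathrm{B}_n$ partition machinery — at the cost of the combinatorial step, while the paper's approach is shorter given Corollary \ref{Besselpartnonsplit} as a black box. The ``in particular'' step is handled by essentially the same Bruhat-decomposition argument in both; your observation that $t_{l-1}$ carries $B_{\GL_{l-1}}$ into $B_{\SO_{2l}}$ matches the paper's splitting of $a$ as $u_1 t w u_2$ and applying Proposition \ref{Besselprop} with Lemma \ref{lemma Bessel support vanish}.
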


\begin{proof}
We use the partition of the Bessel support in Corollary \ref{Besselpartnonsplit}.
First, suppose that $t_{l-1}(w)=t_n(w')\tilde{w}_n$ for some $n$ with $1\leq n\leq {l-1}$ and $w'\in W(\GL_n).$ Then, $\tilde{w}_n\in W(\GL_{l-1}).$ However, this is not the case for any $n$. Thus we have $t_{l-1}(w)\notin \mathrm{B}(\SO_{2l}).$

For the second part of the claim, suppose $a\in\GL_{l-1}$ is not upper triangular, Then, by the Bruhat decomposition of $\GL_{l-1}$, there exists $w\in W(\GL_{l-1})$ such that $w\neq I_{l-1}$ and $a=u_1t w u_2$ where $t\in T_{\GL_{l-1}}$ and $u_1, u_2\in U_{\GL_{l-1}}.$ We obtain $\B_{\pi,\psi}(t_{l-1}(a))=\B_{\pi,\psi}(t_{l-1}(t)t_{l-1}(w))\psi(t_{l-1}(u_1))\psi(t_{l-1}(u_2)).$ However, $t_{l-1}(w)\notin \mathrm{B}(\SO_{2l})$ and hence $\B_{\pi,\psi}(t_{l-1}(t)t_{l-1}(w))=0$ by Lemma \ref{lemma Bessel support vanish}.
\end{proof}

The next proposition concerns the support of the Bessel functions on Bruhat cells associated to $w\in\mathrm{B}_n(\SO_{2l}).$ The argument is similar to that of Lemma \ref{center}.

\begin{prop}\label{prop support on B_n}
Let $t\in T_{\SO_{2l}}$ and $w\in\mathrm{B}_n(\SO_{2l})$ for $n< l-1.$ Suppose that $\B_{\pi,\psi}(tw)\neq 0.$ If we write
$$
t=\mathrm{diag}(t_1,\dots,t_{l-1},\left(\begin{matrix}
a & b\rho \\
b & a
\end{matrix}\right),t_{l-1}\inv,\dots,t_1\inv)
$$
where $a^2-b^2\rho=1,$ then we must have $a=t_{l-1}=\cdots=t_{n+1}=\pm1$ and $b=0.$
\end{prop}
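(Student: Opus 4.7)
The plan is to imitate the proof of Lemma \ref{center}, computing $\B_{\pi,\psi}$ on $tw\cdot\bold{x}_{\alpha_i}(x)$ and $\bold{x}_{\alpha_i}(x)\cdot tw$ in two ways for each simple root $\alpha_i$ with $n+1\leq i\leq l-1$. The essential preliminary, and the main work of the proof, is to show that $w$ commutes with $\bold{x}_{\alpha_i}(x)$ for such $i$. Writing $w=t_n(w')\tilde{w}_n$ with $w'\in W(\GL_n)$, the factor $t_n(w')=\mathrm{diag}(w',I_{2(l-n)},(w')^*)$ is the identity on the middle block of indices $n+1,\ldots,2l-n$, while $\tilde{w}_n$ is the identity on indices $n+1,\ldots,l-1$ and $l+2,\ldots,2l-n$ and acts only by the sign block $\bigl(\begin{smallmatrix}-1 & 0\\0 & 1\end{smallmatrix}\bigr)^n$ on positions $l,l+1$. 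For $n+1\leq i\leq l-2$, $\bold{x}_{\alpha_i}(x)$ has its non-trivial entries at $(i,i+1)$ and $(2l-i,2l-i+1)$, which sit entirely inside the identity blocks, so the commutation is immediate. For $i=l-1$, $\bold{x}_{\alpha_{l-1}}(x)$ is supported in the central $4\times 4$ block at positions $l-1,l,l+1,l+2$, and here one has to rule out any sign twist from the $\bigl(\begin{smallmatrix}-1 & 0\\0 & 1\end{smallmatrix}\bigr)^n$ piece; the non-trivial off-diagonal entries of $\bold{x}_{\alpha_{l-1}}(x)$ sit at $(l-1,l+1)$, $(l-1,l+2)$, and $(l+1,l+2)$, all of which avoid index $l$, and a one-line direct calculation confirms $\tilde{w}_n\bold{x}_{\alpha_{l-1}}(x)\tilde{w}_n^{-1}=\bold{x}_{\alpha_{l-1}}(x)$.

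With the commutation in hand I first handle the long roots. For $n+1\leq i\leq l-2$, on one side $\B_{\pi,\psi}(tw\bold{x}_{\alpha_i}(x))=\psi(\bold{x}_{\alpha_i}(x))\B_{\pi,\psi}(tw)$ by Proposition \ref{Besselprop}; on the other, using the commutation, $tw\bold{x}_{\alpha_i}(x)=t\bold{x}_{\alpha_i}(x)w=\bold{x}_{\alpha_i}(\alpha_i(t)x)tw$, giving $\psi(\bold{x}_{\alpha_i}(\alpha_i(t)x))\B_{\pi,\psi}(tw)$. Since $\B_{\pi,\psi}(tw)\neq 0$ and $\psi$ is non-trivial, this forces $\alpha_i(t)=t_it_{i+1}^{-1}=1$, and varying $i$ yields $t_{n+1}=\cdots=t_{l-1}$.

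For the short root $\alpha_{l-1}$ I repeat the two-sided argument of Lemma \ref{center} with $t$ replaced by $tw$. Using the commutation $w\bold{x}_{\alpha_{l-1}}(x)w^{-1}=\bold{x}_{\alpha_{l-1}}(x)$ and the identities $\psi(t\bold{x}_{\alpha_{l-1}}(x)t^{-1})=\psi(\bold{x}_{\alpha_{l-1}}(at_{l-1}x))$ and $\psi(t^{-1}\bold{x}_{\alpha_{l-1}}(x)t)=\psi(\bold{x}_{\alpha_{l-1}}(at_{l-1}^{-1}x))$ recorded in the proof of Lemma \ref{center}, the two computations $\B_{\pi,\psi}(tw\bold{x}_{\alpha_{l-1}}(x))=\B_{\pi,\psi}((t\bold{x}_{\alpha_{l-1}}(x)t^{-1})tw)$ and $\B_{\pi,\psi}(\bold{x}_{\alpha_{l-1}}(x)tw)=\B_{\pi,\psi}(tw(t^{-1}\bold{x}_{\alpha_{l-1}}(x)t))$ yield $at_{l-1}=1$ and $at_{l-1}^{-1}=1$ respectively. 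Thus $a=t_{l-1}=\pm1$, and then $a^2-b^2\rho=1$ forces $b=0$. Together with the long-root step this gives $a=t_{l-1}=\cdots=t_{n+1}=\pm1$ and $b=0$, as required.
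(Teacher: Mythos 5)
Your proof is correct and follows essentially the same route as the paper's: the paper records the root-level identity $w\alpha_i=\alpha_i$ for $n+1\le i\le l-1$ and then directly writes $tw\,\bold{x}_\delta(x)=\bold{x}_\delta(\delta(t)x)\,tw$, whereas you explicitly check that the chosen matrix representatives $t_n(w')$ and $\tilde{w}_n$ commute with each $\bold{x}_{\alpha_i}(x)$ before invoking Proposition \ref{Besselprop}; the long-root and two-sided short-root computations that follow are identical to the paper's.
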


\begin{proof}
By definition, there exists $w'\in W(\GL_n)$ such that $w=t_n(w')\tilde{w}_n.$ We verify directly that $w\alpha_i=\alpha_i$ for $i=n+1,\dots,l-1.$ Let $\delta\in\{\alpha_{n+1},\dots,\alpha_{l-2}\}.$ For $x\in\mathbb{F}_q$, let $\bold{x}_\delta(x)$ be in the root subgroup of $\delta$. Then $tw \bold{x}_\delta(x) = \bold{x}_\delta(\delta(t) x) tw.$ Hence, by Proposition \ref{Besselprop}, $$
\psi(\bold{x}_\delta(x))\B_{\pi,\psi}(tw) = \psi(\bold{x}_\delta(\delta(t)x))\B_{\pi,\psi}(tw).
$$
Thus, $\psi(\bold{x}_\delta(x))= \psi(\bold{x}_\delta(\delta(t)x))$ for any $x\in\mathbb{F}_q$. Since $\psi$ is nontrivial and $x$ is arbitrary, we must have $t_{n+1}=\cdots=t_{l-2}.$

Next, we examine the root subgroup for $\delta=\alpha_{l-1}.$ We verify directly that 
\begin{align*}
\psi(\bold{x}_\delta(x))\B_{\pi,\psi}(tw) = \B_{\pi,\psi}(tw \bold{x}_\delta(x)) = \B_{\pi,\psi}(t \bold{x}_\delta(x) t\inv tw) &= \B_{\pi,\psi}(tw)\psi(t \bold{x}_\delta(x) t\inv) \\ &= \psi(\bold{x}_\delta(at_{l-1}x))\B_{\pi,\psi}(tw).
\end{align*}
Similar to the previous case, we deduce that $a=t_{l-1}\inv.$ 
On the other hand, we verify directly
\begin{align*}
\psi(\bold{x}_\delta(x))\B_{\pi,\psi}(tw) = \B_{\pi,\psi}(\bold{x}_\delta(x) tw) = \B_{\pi,\psi}( tw t\inv  \bold{x}_\delta(x)  t) &= \B_{\pi,\psi}(tw)\psi(t\inv \bold{x}_\delta(x) t) \\ &= \psi(\bold{x}_\delta(at_{l-1}\inv x))\B_{\pi,\psi}(tw).
\end{align*}
Hence $a=t_{l-1}.$ Since $a=t_{l-1}=t_{l-1}\inv$, $t_{n+1}=\cdots=t_{l-1}=a=\pm 1$ and $b=0$ which proves the lemma.
\end{proof}

\section{Twists by $\GL_n$ for $n\leq l-2$}\label{section Twists by GLn}

We begin by collecting some results which will be used in determining the $\GL_n$-twists. The first result is a mild strengthening of \cite[Lemma 3.1]{Nie14}.

\begin{lemma}[{\cite[Lemma 5.1]{HL22a}}]\label{Niennon}
Let $H$ be function on $\GL_n$ such that $H(ug)=\psi(u)H(g)$ for any $u\in U_{\GL_n}$ and $g\in\GL_n.$
Let $X$ be a subset of $\GL_n$ and suppose that 
$$\sum_{x\in X} H(x)W_v(x)=0
$$
for any $v\in\tau$ and with $\tau$ running through all $\psi\inv-$generic irreducible representations of $\GL_n.$ Then, $H(x)=0$ for any $x\in X.$
\end{lemma}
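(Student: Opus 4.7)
The plan is to reduce the assertion to a statement about the full induced representation $\mathrm{Ind}_{U_{\GL_n}}^{\GL_n}(\psi^{-1})$, and then test against functions supported on a single $U_{\GL_n}$-coset in order to isolate $H(y)$ for each $y\in X$.

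First, I would invoke the Peter--Weyl theorem for the finite group $\GL_n$. Combined with Frobenius reciprocity and the uniqueness of Whittaker models, this gives the multiplicity-free decomposition $\mathrm{Ind}_{U_{\GL_n}}^{\GL_n}(\psi^{-1}) = \bigoplus_\tau \mathcal{W}(\tau,\psi^{-1})$, where $\tau$ ranges over the $\psi^{-1}$-generic irreducible representations of $\GL_n$. Hence the hypothesis upgrades from Whittaker functions $W_v$ to arbitrary elements of the induced representation:
$$\sum_{x\in X} H(x)\, W(x) = 0 \qquad \text{for every } W\in\mathrm{Ind}_{U_{\GL_n}}^{\GL_n}(\psi^{-1}).$$

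Next, for each $y\in\GL_n$ I would introduce the explicit test function $W_y\in\mathrm{Ind}_{U_{\GL_n}}^{\GL_n}(\psi^{-1})$ defined by $W_y(uy)=\psi^{-1}(u)$ for $u\in U_{\GL_n}$ and $W_y\equiv 0$ off the single coset $U_{\GL_n}y$. This is well defined because left translation by $U_{\GL_n}$ is free. Substituting $W_y$ into the vanishing relation and using the hypothesis $H(uy)=\psi(u)H(y)$, each contributing term collapses to $H(y)$:
$$0 \;=\; \sum_{x\in X} H(x)\, W_y(x) \;=\; \sum_{\substack{u\in U_{\GL_n}\\ uy\in X}} \psi(u)H(y)\psi^{-1}(u) \;=\; \bigl|\{u\in U_{\GL_n}: uy\in X\}\bigr|\cdot H(y).$$
If $y\in X$ then $u=1$ belongs to the indexing set, so the cardinality on the right is strictly positive, forcing $H(y)=0$, as required.

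The only non-formal input is the spanning claim $\bigcup_\tau \mathcal{W}(\tau,\psi^{-1}) = \mathrm{Ind}_{U_{\GL_n}}^{\GL_n}(\psi^{-1})$; this is standard over a finite field but is the single place where one must cite representation theory rather than merely exploit the Whittaker transformation law. Once this is granted, the rest of the argument is a direct coset-by-coset evaluation using the characters $\psi$ and $\psi^{-1}$, and I do not anticipate any further obstacles.
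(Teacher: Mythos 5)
Your proof is correct and is essentially the standard argument for this lemma (which the paper cites from \cite[Lemma~5.1]{HL22a} without reproducing): decompose $\mathrm{Ind}_{U_{\GL_n}}^{\GL_n}(\psi^{-1})$ as a multiplicity-free direct sum of Whittaker models via the Gelfand--Graev theorem, upgrade the vanishing hypothesis to linear functionals on the whole induced space, and isolate $H(y)$ by pairing against the single-coset function $W_y$. The only nitpick is terminological---what you invoke is Maschke's theorem together with the Gelfand--Graev multiplicity-one theorem for $\GL_n(\mathbb{F}_q)$ (equivalently, uniqueness of Whittaker models), rather than Peter--Weyl in the usual compact-group sense---but this does not affect correctness.
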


Now we start the calculations of the zeta integrals for twists up to $\GL_{l-2}.$
Let $\tau$ be an irreducible $\psi^{-1}$-generic representation of $\GL_n$, $v\in\tau$ be a fixed vector, and define $\xi_v\in I(\tau)$ by supp$(\xi_v)=Q_n$ and 
$$
\xi_v(l_n(a)u)=\tau(a)v, \forall a\in\GL_n, u\in V_n,
$$
where $Q_n=L_n V_n$ is the standard Siegel parabolic of $\SO_{2n+1}.$
We define the function $f_v=f_{\xi_v}\in I(\tau,\psi^{-1}).$ That is, $f_v(g,a)=\Lambda_\tau(\tau(a)\xi_v(g))$ where we have $\Lambda_\tau\in \mathrm{Hom}_{U_{GL_{n}}}(\tau,\psi^{-1}),$ $a\in\GL_n,$ and $g\in\SO_{2n+1}.$ We also fix the Whittaker function $W_v(a)=\Lambda_\tau(\tau(a)v).$

\begin{prop}\label{GLnnonzero}
Suppose that $n\leq l-2$ and $\B_{\pi,\psi}$ is the normalized Bessel function of $\pi$, an irreducible cuspidal $\psi$-generic representation of $\SO_{2l}$. Then $\Psi(\B_{\pi,\psi},f_v)= W_v(I_n)$. Moreover, we may choose $v\in\tau$ such that $\Psi(\B_{\pi,\psi},f_v)\neq 0.$
\end{prop}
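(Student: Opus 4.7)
The plan is to unfold $\Psi(\B_{\pi,\psi}, f_v)$ using the support of $f_v$, reduce it to a sum over $\GL_n$, and then collapse it to the single term $W_v(I_n)$ using the vanishing properties of the Bessel function from \S\ref{section Bessel Functions}.

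First, I would use that $\xi_v$ is supported on the Siegel parabolic $Q_n = L_n V_n$, so $f_v(g, I_n) = 0$ unless $g \in Q_n$. Since $U_{\SO_{2n+1}} = U_{\GL_n} \ltimes V_n$ inside $Q_n$, the quotient $U_{\SO_{2n+1}} \backslash Q_n$ is identified with $U_{\GL_n} \backslash \GL_n$ via $l_n(a) v \mapsto a$, and $f_v(l_n(a), I_n) = W_v(a)$ by the definition of $f_v$. Writing simply $l_n(a)$ for its image in $\SO_{2l}$ under the embedding of \S\ref{setup}, the integral reduces to
\[
\Psi(\B_{\pi,\psi}, f_v) = \sum_{a \in U_{\GL_n} \backslash \GL_n} \Bigl(\sum_{r \in R^{l,n}} \B_{\pi,\psi}\bigl(r\, w^{l,n} l_n(a)\, (w^{l,n})^{-1}\bigr)\Bigr) W_v(a).
\]

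Second, a direct matrix computation identifies $w^{l,n} l_n(a) (w^{l,n})^{-1}$ with $t_{l-1}(\mathrm{diag}(a, I_{l-n-1}))$, placing $a$ in the top-left $n \times n$ corner of the Siegel Levi $\GL_{l-1} \subset \SO_{2l}$. I would then argue that the inner sum collapses to $r = I_{2l}$: for each $r \neq I_{2l}$ in $R^{l,n}$, the nontrivial lower-triangular block entries of $r$ push the product $r \cdot t_{l-1}(\mathrm{diag}(a, I_{l-n-1}))$ into a Bruhat cell $B_{\SO_{2l}} w B_{\SO_{2l}}$ whose Weyl representative $w$ lies outside $\mathrm{B}(\SO_{2l})$, so Lemma \ref{lemma Bessel support vanish} forces the term to vanish.

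Third, with only $r = I_{2l}$ remaining, Proposition \ref{uppertriangular} forces $\mathrm{diag}(a, I_{l-n-1}) \in \GL_{l-1}$ to be upper triangular, hence $a$ upper triangular; modulo $U_{\GL_n}$ we may take $a = t \in T_{\GL_n}$. The corresponding torus element $t_{l-1}(\mathrm{diag}(t, I_{l-n-1})) \in T_{\SO_{2l}}$ is then supported by $\B_{\pi,\psi}$ only if it is central in $\SO_{2l}$ by Lemma \ref{center}, and the fixed identity block of size $l - n - 1 \geq 1$ forces $t = I_n$. The only surviving contribution is therefore $\B_{\pi,\psi}(I_{2l}) W_v(I_n) = W_v(I_n)$, proving the equality. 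For the nonvanishing claim, since $\tau$ is $\psi^{-1}$-generic any Whittaker vector $v$ gives $W_v(I_n) \neq 0$.

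The main obstacle will be verifying the second step rigorously: one must show that for every nontrivial $r \in R^{l,n}$ the Bruhat decomposition of $r \cdot t_{l-1}(\mathrm{diag}(a, I_{l-n-1}))$ indeed produces a Weyl representative outside $\mathrm{B}(\SO_{2l})$. In the quasi-split setting this requires a careful analysis that respects both the possibly non-commutative structure of the root subgroups of $\SO_{2l}$ and the non-split part of the torus $T_{\SO_{2l}}$, to rule out the possibility that the lower-triangular entries of $r$ combine with $t_{l-1}(\mathrm{diag}(a, I_{l-n-1}))$ to produce a Bessel-supporting cell.
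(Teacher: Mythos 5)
Your proposal follows the same overall strategy as the paper's proof -- unfold the integral via the support of $f_v$, reduce to a sum over $U_{\GL_n}\backslash\GL_n$, and collapse using the vanishing properties of $\B_{\pi,\psi}$ -- and the last step (reduction to $T_{\GL_n}$, then Lemma~\ref{center} together with $l-n-1\geq 1$ forcing $t=I_n$, then choosing $v$ to be a Whittaker vector) matches the paper exactly. However, there is a real gap at precisely the place you flag: the vanishing of the $r\neq I_{2l}$ terms of the inner $R^{l,n}$-sum. You propose to treat this separately from the $a$-upper-triangularity, by arguing directly that $r\cdot t_{l-1}(\mathrm{diag}(a,I_{l-n-1}))$ falls into a Bruhat cell outside $\mathrm{B}(\SO_{2l})$, and you raise the worry that this might be delicate in the quasi-split setting because of the non-split torus and non-abelian root subgroups.

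The paper closes this gap with a much simpler observation that handles the $r$-sum and the restriction on $a$ in one stroke. One computes directly that for any $r_x\in R^{l,n}$,
\[
r_x\, w^{l,n} q_n(a)\, (w^{l,n})^{-1} \;=\; t_{l-1}(b), \qquad b=\begin{pmatrix} a & 0 \\ xa & I_{l-n-1}\end{pmatrix}\in\GL_{l-1},
\]
so the whole product lands inside the $\GL_{l-1}$-Levi of $P_{l-1}$. That Levi is split, so the quasi-split complications you were worried about never enter the picture at this stage. Proposition~\ref{uppertriangular} (which is itself the packaged consequence of Lemma~\ref{lemma Bessel support vanish} you were trying to invoke by hand) then forces $b$ to be upper triangular, which simultaneously gives $x=0$ and $a$ upper triangular. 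You should replace your proposed two-step Bruhat analysis of the $r$-sum with this single $t_{l-1}(b)$ observation; without it, your second step is left unverified.
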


\begin{proof}
By definition, $$\Psi(\B_{\pi,\psi},f_v)
=\sum_{g\in U_{\SO_{2n+1}}\setminus\SO_{2n+1}} \left(\sum_{r\in R^{l,n}} \B_{\pi,\psi}(r w^{l,n} g (w^{l,n})\inv)\right)   f_v(g, I_{l}).
$$
The support of $f_v(g, I_n)$ is contained in $V_n L_n$. Since $V_n\subseteq U_{\SO_{2n+1}}$, we have 
$$\Psi(\B_{\pi,\psi},f_v)
=\sum_{a\in U_{GL_n}\setminus\GL_n} \left(\sum_{r\in R^{l,n}} \B_{\pi,\psi}(r w^{l,n} l_n(a) (w^{l,n})\inv)\right)   W_v(a).
$$
Now, the embedding of $\SO_{2n+1}$ into $\SO_{2l}$ takes $l_n(a)$ to $q_n(a)=\mathrm{diag}(I_{l-n-1},a,I_2,a^*,I_{l-n-1}).$ Thus, $w^{l,n} q_n(a) (w^{l,n})\inv=\mathrm{diag}(a, I_{2l-2n}, a^*).$ Let $$r_x=\left(\begin{matrix}
I_n & & & & \\
x & I_{l-n-1} & & & \\
& & I_2 & & \\
& & & I_{l-n-1} & \\
& & & x^\prime & I_n
\end{matrix}\right).$$ Then, $$
r_x w^{l,n} q_n(a) (w^{l,n})\inv
=\left(\begin{matrix}
a & & & & \\
xa & I_{l-n-1} & & & \\
& & I_2 & & \\
& & & I_{l-n-1} & \\
& & & x^\prime a^* & a^*
\end{matrix}\right).
$$
By Proposition \ref{uppertriangular}, $\B_{\pi,\psi}(r_x w^{l,n} q_n(a) (w^{l,n})\inv)=0$ unless $a$ is upper triangular and $x=0.$ Hence, 
$$\Psi(\B_{\pi,\psi},f_v)
=\sum_{a\in T_{\GL_n}} \B_{\pi,\psi}(t_n(a))   W_v(a).
$$
Finally, by Lemma \ref{center}, $\Psi(\B_{\pi,\psi},f_v)
=W_v(I_n).$ Finally, if we choose $v$ to be a Whittaker vector for $\tau$, then $W_v(I_n)\neq 0.$
\end{proof}

Let $\tilde{f}_v=M(t,\psi\inv)f_v$ and $W_v^*(a)=\Lambda_\tau(\tau(d_n a^*)v).$ The following lemma lists several properties of $\tilde{f}_v$ that will be used in computing the zeta integrals.
\begin{lemma}[{\cite[Lemma 6.2]{HL22a}}]\label{intertwinenon}
For any $n\leq l$, we have the following:
\begin{enumerate}
    \item If $\tilde{f}_v(g, I_n)\neq 0,$ then $g\in Q_n w_n Q_n = Q_n w_n V_n.$
    \item If $x\in V_n$, then $\tilde{f}_v(w_n x, I_n)=W^*_v(I_n).$
    \item For $a\in\GL_n$ and $x\in V_n,$ $\tilde{f}_v(l_n(a)w_n x, I_n)=W_v^*(a).$
\end{enumerate}
\end{lemma}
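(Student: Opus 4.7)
The plan is to unfold the definition
\[
\tilde f_v(g, I_n) \;=\; \sum_{u\in V_n} f_v(w_n u g,\, d_n)
\]
and exploit two structural facts. First, $f_v(\cdot, a)$ is supported on $Q_n$ in its first argument, since $\xi_v$ is supported on $Q_n$ by construction. Second, one needs the standard matrix identities for $w_n$: namely $w_n^{-1}=w_n$ (a direct check shows $w_n^2 = I_{2n+1}$), $w_n$ normalizes $L_n$ with action $l_n(a)\mapsto l_n(a^*)$, and $w_n V_n w_n^{-1} = V_n^-$, the opposite unipotent radical. Throughout, I would use the Bruhat fact $V_n^- \cap Q_n = \{1\}$.

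For (1): if $\tilde f_v(g, I_n)\neq 0$, then $w_n u g \in Q_n$ for some $u\in V_n$, which forces $g \in u^{-1} w_n\, Q_n \subseteq V_n w_n Q_n \subseteq Q_n w_n Q_n$. The equality $Q_n w_n Q_n = Q_n w_n V_n$ follows from the normalization of $L_n$ by $w_n$: unpacking, $Q_n w_n Q_n = Q_n w_n L_n V_n = Q_n L_n w_n V_n = Q_n w_n V_n$.

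For (2) and (3): I treat them uniformly by taking $g = l_n(a) w_n x$ with $a\in \GL_n$ and $x \in V_n$ (the case $a=I_n$ will yield (2)). Using $w_n^2 = I_{2n+1}$ and $w_n l_n(a) w_n = l_n(a^*)$, I rewrite
\[
w_n u \, l_n(a) w_n x \;=\; (w_n u w_n^{-1})\, l_n(a^*)\, x.
\]
Since $w_n u w_n^{-1} \in V_n^-$ and $V_n^- \cap Q_n = \{1\}$, membership of this product in $Q_n$ (which is needed for nonvanishing of $f_v(\cdot, d_n)$) forces $u = 1$. The single surviving summand evaluates to
\[
f_v\bigl(l_n(a^*) x,\, d_n\bigr) \;=\; \Lambda_\tau\!\bigl(\tau(d_n)\,\xi_v(l_n(a^*) x)\bigr) \;=\; \Lambda_\tau\!\bigl(\tau(d_n a^*) v\bigr) \;=\; W_v^*(a),
\]
using $x\in V_n$ together with the defining formula $\xi_v(l_n(a^*)x) = \tau(a^*)v$.

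The main obstacle, such as it is, is purely computational: none of the difficulties involving the non-split torus or outer automorphism that motivate the rest of the paper intervene here, since $M(\tau,\psi^{-1})$ is constructed entirely on the split odd group $\SO_{2n+1}$. The proof reduces to the matrix verifications $w_n^{-1} = w_n$, $w_n l_n(a) w_n^{-1} = l_n(a^*)$, and $V_n^- \cap Q_n = \{1\}$, which are identical to the split orthogonal argument of \cite[Lemma 6.2]{HL22a}.
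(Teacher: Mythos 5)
Your proof is correct, and it reconstructs the argument that the paper itself does not spell out (the paper simply cites this as \cite[Lemma 6.2]{HL22a}). All the ingredients are as you describe: $f_v(\cdot, a)$ is supported on $Q_n$ because $\xi_v$ is, the identities $w_n^2 = I_{2n+1}$, $w_n l_n(a) w_n = l_n(a^*)$, and $w_n V_n w_n = V_n^-$ follow by direct matrix computation, and $V_n^- \cap Q_n = \{1\}$ is the Bruhat fact for the (opposite) Siegel parabolic. The chain
\[
w_n u\, l_n(a) w_n x = (w_n u w_n)\bigl(w_n l_n(a) w_n\bigr)x = (w_n u w_n)\, l_n(a^*)\, x
\]
then isolates $u = 1$ as the only contributing term since $l_n(a^*)x \in Q_n$, and the surviving value is $\Lambda_\tau(\tau(d_n)\tau(a^*)v) = W_v^*(a)$, giving both (2) (at $a = I_n$) and (3). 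Your observation that the argument lives entirely on the split odd group $\SO_{2n+1}$ and is therefore insulated from the quasi-split subtleties in the rest of the paper is also correct; this is exactly why the author could borrow the lemma unchanged from the split case.
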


The following theorem is the main result for twists up to $\GL_{l-2}.$

\begin{thm}\label{GL_n}
Suppose that $n\leq l-2.$ Let $\pi$ and $\pi^\prime$ be irreducible cuspidal $\psi$-generic representations of $\SO_{2l}$ which share the same central character. If $\gamma(\pi\times\tau,\psi)=\gamma(\pi^\prime\times\tau,\psi)$ for all irreducible $\psi^{-1}$-generic representations $\tau$ of $\GL_n$, then
$$\B_{\pi,\psi}(t_n(a)  \tilde{w_n})=\B_{\pi^\prime,\psi}(t_n(a)   \tilde{w_n})$$ for any $a\in\GL_n.$
\end{thm}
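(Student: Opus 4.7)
The plan is to use the functional equation of Proposition \ref{gammafactor} with $(W,f) = (\B_{\pi,\psi}, f_v)$ for $v$ ranging over vectors in arbitrary $\psi^{-1}$-generic $\tau$, to combine it with Proposition \ref{GLnnonzero} on the ``small'' side, and then to apply Lemma \ref{Niennon} to the resulting identity after explicitly unfolding the ``big'' side $\Psi(\B_{\pi,\psi}, \tilde{f}_v)$, where $\tilde{f}_v := M(\tau,\psi^{-1})f_v$.

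First, Proposition \ref{GLnnonzero} gives $\Psi(\B_{\pi,\psi}, f_v) = W_v(I_n) = \Psi(\B_{\pi',\psi}, f_v)$, independent of $\pi$. Hence the hypothesis $\gamma(\pi\times\tau,\psi) = \gamma(\pi'\times\tau,\psi)$ transfers through the functional equation to
$$\Psi(\B_{\pi,\psi}, \tilde{f}_v) \;=\; \Psi(\B_{\pi',\psi}, \tilde{f}_v)$$
for every such $(\tau,v)$. By Lemma \ref{intertwinenon}, $\tilde{f}_v(\cdot,I_n)$ is supported on $Q_n w_n V_n$ and equals $W_v^*(a)$ on $l_n(a)w_n x$; using the intertwining identities $w^{l,n} l_n(a)(w^{l,n})^{-1} = t_n(a)$ and $w^{l,n} w_n(w^{l,n})^{-1} = \tilde{w}_n$, the zeta integral rewrites as
$$\Psi(\B_{\pi,\psi}, \tilde{f}_v) = \sum_{a \in U_{\GL_n}\backslash \GL_n} W_v^*(a)\,\Sigma_\pi(a),\qquad \Sigma_\pi(a) := \sum_{x \in V_n}\sum_{r\in R^{l,n}} \B_{\pi,\psi}\bigl(r\,t_n(a)\tilde{w}_n\,w^{l,n}x(w^{l,n})^{-1}\bigr).$$

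The heart of the proof is to show $\Sigma_\pi(a) = C\cdot \B_{\pi,\psi}(t_n(a)\tilde{w}_n)$ for a nonzero constant $C$ independent of both $a$ and $\pi$. To do this I would conjugate each $r$ and each $w^{l,n}x(w^{l,n})^{-1}$ through the semisimple part $t_n(a)\tilde{w}_n$ using Proposition \ref{Besselprop}, and then analyze which roots occur. Because $\theta_{\tilde{w}_n} = \Delta(\SO_{2l}) \setminus \{\alpha_n\}$, most contributions either land in positive non-simple root directions (forcing vanishing by Lemma \ref{lemma Bessel support vanish}) or produce nontrivial characters $\psi(c\cdot)$ which sum to zero over $\mathbb{F}_q$; the residual term is exactly $r = I$, $x = 0$. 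Along the way one also needs the Bessel partition of Corollary \ref{Besselpartnonsplit} and Proposition \ref{prop support on B_n} to rule out unexpected contributions from Bruhat cells indexed by $w \in \mathrm{B}_{n'}(\SO_{2l})$ with $n' \neq n$. The main obstacle here is the quasi-split feature: since $U_{\alpha_{l-1}}$ is not one-dimensional and $T_{\SO_{2l}}$ has a non-split part, the one-parameter vanishing arguments of the split case \cite{HL22a} must be replaced by the two-sided conjugation analysis introduced in Lemma \ref{center} and Proposition \ref{prop support on B_n}. This is where the non-split torus genuinely enters.

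With the collapse $\Sigma_\pi(a) = C\cdot \B_{\pi,\psi}(t_n(a)\tilde{w}_n)$ (and its analogue for $\pi'$) in hand, the equality of zeta integrals becomes
$$\sum_{a \in U_{\GL_n}\backslash \GL_n} H(a)\,W_v^*(a) = 0,\qquad H(a) := \B_{\pi,\psi}(t_n(a)\tilde{w}_n) - \B_{\pi',\psi}(t_n(a)\tilde{w}_n).$$
A direct calculation using Proposition \ref{Besselprop} (and the fact that $\tilde{w}_n$ fixes $\alpha_i$ for $i<n$) shows $H(ua) = \psi(u)H(a)$ for $u \in U_{\GL_n}$, and a short check shows that $W_v^*$ is left $\psi^{-1}$-equivariant under $U_{\GL_n}$ and that, as $(\tau,v)$ varies, the functions $a \mapsto W_v^*(a)$ span the full space of $\psi^{-1}$-Whittaker functions on $\GL_n$ (via the inner twist $a \mapsto d_n a^*$). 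Lemma \ref{Niennon} then forces $H \equiv 0$ on $\GL_n$, which is exactly the conclusion.
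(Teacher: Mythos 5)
Your overall architecture matches the paper's: apply the functional equation and Proposition \ref{GLnnonzero} to reduce to equating $\Psi(\B_{\pi,\psi},\tilde{f}_v)=\Psi(\B_{\pi',\psi},\tilde{f}_v)$, unfold the big side via Lemma \ref{intertwinenon} to $\sum_{a}W_v^*(a)\Sigma_\pi(a)$, collapse $\Sigma_\pi(a)$ to a multiple of $\B_{\pi,\psi}(t_n(a)\tilde{w}_n)$, and finish with Lemma \ref{Niennon}. Your formula for $\Sigma_\pi(a)$ and the final two steps are correct.

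However, your description of the key collapse $\Sigma_\pi(a)=C\,\B_{\pi,\psi}(t_n(a)\tilde{w}_n)$ is the wrong mechanism, and this is a genuine gap. You claim that most $(r,x)$-contributions ``land in positive non-simple root directions (forcing vanishing by Lemma \ref{lemma Bessel support vanish}) or produce nontrivial characters $\psi(c\cdot)$ which sum to zero over $\mathbb{F}_q$,'' leaving only the residual term $r=I$, $x=0$. Neither mechanism applies. What actually happens is that \emph{every} $(r,x)$ term contributes the \emph{same} value. Concretely, $w^{l,n}\tilde{x}(w^{l,n})^{-1}$ is an upper-triangular unipotent of $\SO_{2l}$ whose nonzero entries occupy only non-simple root positions; since a generic character is by construction trivial off the simple root subgroups, $\psi$ is \emph{trivial} on it, and Proposition \ref{Besselprop} absorbs it on the right with factor $\psi(\cdot)=1$. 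This gives equality with the $x=0$ term, not vanishing, so summing over $x$ produces a factor $|V_n|$. Likewise, pulling $r=r_x$ through $t_n(a)$ to get $r_{ax}$ and writing $r_{ax}\tilde{w}_n=\tilde{w}_n u'$ with $u'$ again supported in non-simple root positions, $\psi(u')=1$ and each $r$ contributes equally, giving a factor $|R^{l,n}|$. Hence $\Sigma_\pi(a)=|V_n||R^{l,n}|\,\B_{\pi,\psi}(t_n(a)\tilde{w}_n)$. Lemma \ref{lemma Bessel support vanish} is irrelevant here: it kills Bruhat cells for Weyl elements \emph{outside} $\mathrm{B}(\SO_{2l})$, but we stay inside the $\tilde{w}_n$ cell, which is in the Bessel support; and there is no $\sum_{\mathbb{F}_q}\psi(c\cdot)=0$ cancellation because $\psi$ never restricts nontrivially to these unipotents. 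You appear to have conflated ``$\psi$ vanishes on non-simple-root unipotents, so the summand is constant'' with ``the Bessel function vanishes off its support.'' The final conclusion survives because any nonzero $\pi$-independent $C$ suffices for Lemma \ref{Niennon}, but as written the heart of the argument is not a valid derivation.

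Two smaller points. First, you cite Lemma \ref{center} and Proposition \ref{prop support on B_n} as where the non-split torus enters this proof; they do not appear here. The big-side collapse is a block-matrix calculation living entirely in the split part; the non-split torus genuinely enters only the small side via Proposition \ref{GLnnonzero} (which uses Lemma \ref{center}), while Proposition \ref{prop support on B_n} is used later when assembling Theorem \ref{Bessels Equal}. Second, you justify $H(ua)=\psi(u)H(a)$ by saying ``$\tilde{w}_n$ fixes $\alpha_i$ for $i<n$,'' which is false (the paper records $\tilde{w}_n\alpha_i=\alpha_{n-i}$ for $1\le i\le n-1$); the correct reason is simply that $t_n(u)\in U_{\SO_{2l}}$ pulls out on the left and $\psi(t_n(u))$ equals the $\GL_n$-generic value $\psi(u)$.
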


\begin{proof}
By Proposition \ref{GLnnonzero}, $\Psi(\B_{\pi,\psi},f_v)=\Psi(\B_{\pi^\prime,\psi},f_v)=W_v(I_n).$ By assumption, $\gamma(\pi\times\tau,\psi)=\gamma(\pi^\prime\times\tau,\psi)$ for all irreducible $\psi^{-1}$-generic representations $\tau$ of $\GL_n$, and hence it follows that $\Psi(\B_{\pi,\psi},\tilde{f_v})=\Psi(\B_{\pi^\prime,\psi},\tilde{f_v}).$ By definition,
$$\Psi(\B_{\pi,\psi},\tilde{f}_v)
=\sum_{g\in U_{\SO_{2n+1}}\setminus\SO_{2n+1}} \left(\sum_{r\in R^{l,n}} \B_{\pi,\psi}(r w^{l,n} g (w^{l,n})\inv)\right)   \tilde{f}_v(g, I_{n}).
$$
By Lemma \ref{intertwinenon},
$$\Psi(\B_{\pi,\psi},\tilde{f}_v)
=\sum_{\substack{a\in U_{GL_n}\setminus\GL_{n} \\ x\in V_n}} \left(\sum_{r\in R^{l,n}} \B_{\pi,\psi}(r w^{l,n} l_n(a) w_n x (w^{l,n})\inv)\right)   \tilde{f}_v(l_n(a) w_n x, I_{n}).
$$

Now, the embedding of $\SO_{2n+1}$ into $\SO_{2l}$ takes  $l_n(a)$ to $q_n(a)=\mathrm{diag}(I_{l-n-1}, a, I_2, a^*, I_{l-n-1}).$
The embedding also takes the unipotent element $x=\left(\begin{matrix}
I_n & * & * \\
& 1 & * \\
& & I_n
\end{matrix}\right)$ to $\tilde{x}=\mathrm{diag}(I_{l-n-1},\left(\begin{matrix}
I_n & * & * \\
& I_2 & * \\
& & I_n
\end{matrix}\right), I_{l-n-1}).$ Now,
$$
\tilde{x}(w^{l,n})\inv=(w^{l,n})\inv\left(\begin{matrix}
I_n & 0 &  * & 0 & * \\
& I_{l-n-1} & 0 & 0 & 0\\
& & I_2 & 0 & * \\
& & & I_{l-n-1} & 0 \\
& & & & I_n
\end{matrix}\right).
$$
The character is trivial on the last unipotent matrix and hence, by Lemma \ref{intertwinenon},
$$\Psi(\B_{\pi,\psi},\tilde{f}_v)
=|V_n| \sum_{a\in U_{GL_n}\setminus\GL_{n}} \left(\sum_{r\in R^{l,n}} \B_{\pi,\psi}(r w^{l,n} q_n(a) w_n (w^{l,n})\inv)\right)   W_v^*(a).
$$

Next, $w^{l,n} q_n(a)=t_n(a) w^{l,n}$ where $t_n(a)=\mathrm{diag}(a,I_{2l-2n}, a^*).$ Let $$r_x=\left(\begin{matrix}
I_n & & & & \\
x & I_{l-n-1} & & & \\
& & I_2 & & \\
& & & I_{l-n-1} & \\
& & & x^\prime & I_n
\end{matrix}\right).$$ Then, $r_x t_n(a) =t_n(a) r_{ax}.$ Recall that $\tilde{w}_n=w^{l,n}w_n(w^{l,n})\inv$ where $w_n\in\SO_{2n+1}$ is realized under the embedding into $\SO_{2l}.$ Then,
$$\Psi(\B_{\pi,\psi},\tilde{f}_v)
=|V_n| \sum_{a\in U_{GL_n}\setminus\GL_{n}} \left(\sum_{r_x\in R^{l,n}} \B_{\pi,\psi}(t_n(a)  r_{ax} \tilde{w}_n)\right)   W_v^*(a).
$$
Now, $$r_{ax}\tilde{w}_n=\tilde{w}_n\left(\begin{matrix}
I_n & &&  x'(a^*) & \\
& I_{l-n-1} & & & xa \\
& & I_2 & & \\
& & & I_{l-n-1} & \\
& & & & I_n
\end{matrix}\right).$$ The character is trivial on the last unipotent element and hence
$$\Psi(\B_{\pi,\psi},\tilde{f}_v)
=|V_n||R^{l,n}| \sum_{a\in U_{GL_n}\setminus\GL_{n}}  \B_{\pi,\psi}(t_n(a)  \tilde{w}_n)   W_v^*(a).
$$
Since $\Psi(\B_{\pi,\psi},\tilde{f_v})=\Psi(\B_{\pi^\prime,\psi},\tilde{f_v}),$ it follows that $$0=\sum_{a\in U_{GL_n}\setminus\GL_{n}}  (\B_{\pi,\psi}-\B_{\pi',\psi})(t_n(a)   \tilde{w}_n)   W_v^*(a).$$

Now, let $f$ be the function on $\GL_n$ defined by $f(a)=(\B_{\pi,\psi}-\B_{\pi',\psi})(t_n(a)   \tilde{w}_n).$ Then, $f(ua)=\psi(u)f(a)$ and $$0=\sum_{a\in U_{GL_n}\setminus\GL_{n}}  f(a)   W_v^*(a).$$ By Lemma \ref{Niennon}, $f(a)=0$ for any $a\in\GL_n$ which gives the claim.
\end{proof}

\section{Twists by $\GL_{l-1}$}\label{section Twists by GL l-1}

In this section, we consider the twists by $\GL_{l-1}$. The arguments in this section are similar to the previous section, except that $R^{l,n}$ and $w^{l,n}$ are trivial which leads to some minor differences.

Let $\tau$ be an irreducible $\psi^{-1}$-generic representation of $\GL_{l-1}$, $v\in\tau$ be a fixed vector, and define $\xi_v\in I(\tau)$ by supp$(\xi_v)=L_{l-1} V_{l-1}= Q_{l-1}$ and 
$$
\xi_v(l_{l-1}(a)u)=\tau(a)v, \forall a\in\GL_{l-1}, u\in V_{l-1}.
$$
Let $f_v=f_{\xi_v}\in I(\tau,\psi^{-1}).$ That is, for $\Lambda_\tau\in \mathrm{Hom}_{U_{GL_{l-1}}}(\tau,\psi^{-1}), a\in\GL_{l-1},$ and $g\in\SO_{2l-1}$, we have $f_v(g,a)=\Lambda_\tau(\tau(a)\xi_v(g))$. We also let $W_v(a)=\Lambda_\tau(\tau(a)v).$ The zeta integrals in this case are nonzero. 

\begin{prop}\label{GL_{l-1}nonzero}
Suppose $\B_{\pi,\psi}$ be the normalized Bessel function of $\pi$, an irreducible cuspidal $\psi$-generic representation of $\SO_{2l}$. Then, we have
$\Psi(\B_{\pi,\psi},f_v)= W_v(I_{l-1})$. In particular, we may choose $v\in\tau$ such that $\Psi(\B_{\pi,\psi},f_v)\neq 0.$
\end{prop}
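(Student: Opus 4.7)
The plan is to mimic the proof of Proposition \ref{GLnnonzero} while taking advantage of the simplifications that occur when $n=l-1$. First I would unfold the definition of the zeta integral. In the current case $R^{l,l-1}$ is the trivial group and $w^{l,l-1} = I_{2l}$, so
\[
\Psi(\B_{\pi,\psi},f_v) = \sum_{g\in U_{\SO_{2l-1}}\setminus\SO_{2l-1}} \B_{\pi,\psi}(g)\, f_v(g,I_{l-1}),
\]
where $\SO_{2l-1}$ is viewed inside $\SO_{2l}$ via the embedding of \S\ref{setup}. Since $\mathrm{supp}(\xi_v)=Q_{l-1}=L_{l-1}V_{l-1}$ and $V_{l-1}\subseteq U_{\SO_{2l-1}}$, the sum collapses to a sum over $a\in U_{\GL_{l-1}}\setminus\GL_{l-1}$:
\[
\Psi(\B_{\pi,\psi},f_v) = \sum_{a\in U_{\GL_{l-1}}\setminus\GL_{l-1}} \B_{\pi,\psi}\bigl(t_{l-1}(a)\bigr)\, W_v(a),
\]
where $t_{l-1}(a)=\mathrm{diag}(a,I_2,a^*)$ is the image of $l_{l-1}(a)$ under the embedding.

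Next I would reduce the sum via the vanishing properties of $\B_{\pi,\psi}$. By Proposition \ref{uppertriangular}, $\B_{\pi,\psi}(t_{l-1}(a))=0$ unless $a$ is upper triangular, so only coset representatives $a\in T_{\GL_{l-1}}$ contribute. Then Lemma \ref{center} forces $t_{l-1}(a)$ to lie in the center of $\SO_{2l}$; writing $a=\mathrm{diag}(t_1,\dots,t_{l-1})$, the element $t_{l-1}(a)=\mathrm{diag}(t_1,\dots,t_{l-1},1,1,t_{l-1}\inv,\dots,t_1\inv)$ equals $\pm I_{2l}$ only when $a=I_{l-1}$ (the $-I_{2l}$ option is impossible because the middle $I_2$ block cannot become $-I_2$). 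Therefore only $a=I_{l-1}$ survives, giving
\[
\Psi(\B_{\pi,\psi},f_v) = \B_{\pi,\psi}(I_{2l})\, W_v(I_{l-1}) = W_v(I_{l-1}).
\]

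Finally, choosing $v$ to be a Whittaker vector for $\tau$ yields $W_v(I_{l-1})\neq 0$, proving the ``in particular'' clause. I do not anticipate any genuine obstacle: the argument is a direct specialization of the $n\leq l-2$ case, with the sole adjustment being the absence of the unipotent piece $R^{l,n}$ and Weyl conjugation $w^{l,n}$, which actually shortens the bookkeeping.
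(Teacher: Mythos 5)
Your proof is correct and follows essentially the same route as the paper's: unfold the zeta integral (noting $R^{l,l-1}$ and $w^{l,l-1}$ are trivial), restrict to the support $Q_{l-1}$ of $\xi_v$, invoke Proposition \ref{uppertriangular} and Lemma \ref{center} to collapse the sum to $a = I_{l-1}$, and then choose a Whittaker vector. The only difference is that you spell out explicitly why the central possibility $-I_{2l}$ is ruled out (the fixed $I_2$ block), which the paper leaves implicit under the phrase ``Thus, by Lemma \ref{center}.''
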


\begin{proof}
By definition, $$\Psi(\B_{\pi,\psi},f_v)=\sum_{g\in U_{\SO_{2l-1}}\setminus \SO_{2l-1}} \B_{\pi,\psi}(g) f_v(g, I_{l-1}).$$
The support of $f_v(\cdot,I_{l-1})$ is $Q_{l-1}.$ Thus, $$\Psi(\B_{\pi,\psi},f_v)=\sum_{a\in U_{\GL_{l-1}}\setminus \GL_{l-1}} \B_{\pi,\psi}(l_{l-1}(a)) f_v(l_{l-1}(a), I_{l-1}).$$
Under the embedding of $\SO_{2l-1}$ into $\SO_{2l}$, $l_{l-1}(a)$ maps to $t_{l-1}(a).$ Hence, 
$$\Psi(\B_{\pi,\psi},f_v)=\sum_{a\in U_{\GL_{l-1}}\setminus \GL_{l-1}} \B_{\pi,\psi}(t_{l-1}(a)) W_v(a).$$
By Proposition \ref{uppertriangular}, $\B_{\pi,\psi}(t_{l-1}(a))=0$ unless $a$ is upper triangular. Thus, by Lemma $\ref{center}$,
$$\Psi(\B_{\pi,\psi},f_v)=W_v(I_{l-1}).$$ If we choose $v$ to be a Whittaker vector for $\tau$, then $W_v(I_{l-1})\neq 0.$ This proves the proposition.
 \end{proof}

The next theorem shows that the twists by $\GL_{l-1}$ determine that the Bessel functions are equal on a subset of the Bruhat cells in $\mathrm{B}_{l-1}(\SO_{2l}).$ More specifically, the Bessel functions are determined on the parts of the cell corresponding to the split part of the torus.

\begin{thm}\label{GL_{l-1}}
Let $\pi$ and $\pi^\prime$ be irreducible cuspidal $\psi$-generic representations of $\SO_{2l}$ which share the same central character. If $\gamma(\pi\times\tau,\psi)=\gamma(\pi^\prime\times\tau,\psi)$ for all irreducible generic representations $\tau$ of $\GL_{l-1}$, then we have
$$\B_{\pi,\psi}(t_{l-1}(a) \tilde{w}_{l-1})=\B_{\pi^\prime,\psi}(t_{l-1}(a)  \tilde{w}_{l-1})$$ for any $a\in\GL_{l-1}.$
\end{thm}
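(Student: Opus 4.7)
The plan is to mimic the proof of Theorem~\ref{GL_n} with the simplifications that arise from $R^{l,l-1}$ and $w^{l,l-1}$ being trivial (since the blocks of size $l - (l-1) - 1 = 0$ collapse). Starting from Proposition~\ref{GL_{l-1}nonzero}, I have $\Psi(\B_{\pi,\psi}, f_v) = \Psi(\B_{\pi',\psi}, f_v) = W_v(I_{l-1})$, which is nonzero when $v$ is a Whittaker vector. The functional equation (Proposition~\ref{gammafactor}) combined with the hypothesis on $\gamma$-factors then forces $\Psi(\B_{\pi,\psi}, \tilde{f}_v) = \Psi(\B_{\pi',\psi}, \tilde{f}_v)$, where $\tilde{f}_v = M(\tau, \psi^{-1}) f_v$.

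I would then unfold $\Psi(\B_{\pi,\psi}, \tilde{f}_v)$ directly. Because the $R^{l,l-1}$-sum and the conjugation by $w^{l,l-1}$ both drop out, the integral simplifies to $\sum_{g \in U_{\SO_{2l-1}} \setminus \SO_{2l-1}} \B_{\pi,\psi}(g) \, \tilde{f}_v(g, I_{l-1})$. By Lemma~\ref{intertwinenon}(1), this is supported on $g = l_{l-1}(a) w_{l-1} x$ with $a \in U_{\GL_{l-1}} \setminus \GL_{l-1}$ and $x \in V_{l-1}$, and Lemma~\ref{intertwinenon}(3) evaluates $\tilde{f}_v(l_{l-1}(a) w_{l-1} x, I_{l-1}) = W_v^*(a)$ independent of $x$. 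Under the embedding $\SO_{2l-1} \hookrightarrow \SO_{2l}$, $l_{l-1}(a) w_{l-1}$ maps to $t_{l-1}(a) \tilde{w}_{l-1}$ and $x$ maps to some $\tilde{x} \in U_{\SO_{2l}}$.

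The essential new step is to check that $\psi(\tilde{x}) = 1$ for every $x \in V_{l-1}$. The generic character of $U_{\SO_{2l}}$ reads off the entries $u_{i, i+1}$ for $i \leq l-2$ and $\tfrac{1}{2} u_{l-1, l+1}$; a direct computation using conjugation by the block matrix $M$ of \S\ref{setup} shows that the top-left $(l-1) \times (l-1)$ block of $\tilde{x}$ is the identity (so the first group of entries vanishes) and that the $(l-1, l+1)$ entry of $\tilde{x}$ is also zero. Proposition~\ref{Besselprop} then gives $\B_{\pi,\psi}(t_{l-1}(a) \tilde{w}_{l-1} \tilde{x}) = \B_{\pi,\psi}(t_{l-1}(a) \tilde{w}_{l-1})$, so the sum over $V_{l-1}$ contributes the constant $|V_{l-1}|$ and I obtain
\[
\Psi(\B_{\pi,\psi}, \tilde{f}_v) = |V_{l-1}| \sum_{a \in U_{\GL_{l-1}} \setminus \GL_{l-1}} \B_{\pi,\psi}(t_{l-1}(a) \tilde{w}_{l-1}) \, W_v^*(a).
\]

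Equating this with the analogous expression for $\pi'$ and letting $\tau$ range over all irreducible $\psi^{-1}$-generic representations of $\GL_{l-1}$, I apply Lemma~\ref{Niennon} to the function $a \mapsto (\B_{\pi,\psi} - \B_{\pi',\psi})(t_{l-1}(a) \tilde{w}_{l-1})$, which transforms correctly under left multiplication by $U_{\GL_{l-1}}$ by Proposition~\ref{Besselprop}. This yields the claimed vanishing for every $a \in \GL_{l-1}$. Since the bulk of the argument is structurally parallel to Theorem~\ref{GL_n}, the only real obstacle specific to the $n = l-1$ case is the triviality check $\psi(\tilde{x}) = 1$, a short but notationally delicate matrix calculation.
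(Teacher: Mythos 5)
Your proposal is correct and follows essentially the same route as the paper's proof: both unfold the zeta integral using Lemma \ref{intertwinenon}, identify the embedded $l_{l-1}(a)w_{l-1}x$ with $t_{l-1}(a)\tilde{w}_{l-1}\tilde{x}$ in $\SO_{2l}$, verify that $\psi(\tilde{x})=1$ by the block-matrix computation with $M$, factor out $|V_{l-1}|$, and apply Lemma \ref{Niennon}. The paper simply writes out $\tilde{u}$ explicitly rather than phrasing the triviality of $\psi$ on the image as a separate check, but the content is identical.
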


\begin{proof}
By Proposition \ref{GL_{l-1}nonzero}, $\Psi(\B_{\pi,\psi},f_v)=\Psi(\B_{\pi^\prime,\psi},f_v)=W_v(I_{l-1}).$ From our assumption, $\gamma(\pi\times\tau,\psi)=\gamma(\pi^\prime\times\tau,\psi)$ for all irreducible generic representations $\tau$ of $\GL_{l-1}$, and hence it follows that $\Psi(\B_{\pi,\psi},\tilde{f_v})=\Psi(\B_{\pi^\prime,\psi},\tilde{f_v}).$ By definition,
$$\Psi(\B_{\pi,\psi},\tilde{f}_v)=\sum_{g\in U_{\SO_{2l-1}}\setminus \SO_{2l-1}} \B_{\pi,\psi}(g) \tilde{f}_v(g, I_{l-1}).$$
From Lemma \ref{intertwinenon}, we have
$$\Psi(\B_{\pi,\psi},\tilde{f}_v)=\sum_{\substack{a\in U_{\GL_{l-1}}\setminus \GL_{l-1} \\ u\in V_{l-1}}} \B_{\pi,\psi}(l_{l-1}(a)w_{l-1} u) W_v^*(a).$$
The embedding of $\SO_{2l-1}$ into $\SO_{2l}$ takes $l_{l-1}(a)$ to $t_{l-1}(a)$ and $w_{l-1}$ to $\tilde{w}_{l-1}.$ Let $$
u=\left(\begin{matrix}
I_{l-1} & x & y \\
& 1 & x' \\
& & I_{l-1}
\end{matrix}\right).
$$
Then the embedding takes $u$ to
$$
\tilde{u}=\left(\begin{matrix}
I_{l-1} & x &  & y \\
& 1 & & x' \\
& & 1 &  \\
& & & I_{l-1}
\end{matrix}\right).
$$
Since $\psi(\tilde{u})=1,$ we have 
$$\Psi(\B_{\pi,\psi},\tilde{f}_v)=|V_{l-1}|\sum_{a\in U_{\GL_{l-1}}\setminus \GL_{l-1} } \B_{\pi,\psi}(t_{l-1}(a)\tilde{w}_{l-1} ) W_v^*(a).$$
From $\Psi(\B_{\pi,\psi},\tilde{f_v})=\Psi(\B_{\pi^\prime,\psi},\tilde{f_v}),$ it follows that $$0=\sum_{a\in U_{GL_{l-1}}\setminus\GL_{l-1}}  (\B_{\pi,\psi}-\B_{\pi',\psi})(t_{l-1}(a)   \tilde{w}_{l-1})   W_v^*(a).$$

Let $f$ be the function on $\GL_{l-1}$ defined by $f(a)=(\B_{\pi,\psi}-\B_{\pi',\psi})(t_{l-1}(a)  \tilde{w}_{l-1}).$ Then, $f(ua)=\psi(u)f(a)$ and $$0=\sum_{a\in\GL_{l-1}}  f(a)   W_v^*(a).$$ By Lemma \ref{Niennon}, $f(a)=0$ for any $a\in\GL_{l-1}$ which gives the theorem.
 \end{proof}

The following corollary shows that Theorems \ref{GL_n} and \ref{GL_{l-1}} hold for the Bessel functions of $\pi^c$ and $\pi'{}^c$ as well.

\begin{cor}\label{cGL_n}
Let $\pi$ and $\pi^\prime$ be irreducible cuspidal $\psi$-generic representations of $\SO_{2l}$ which share the same central character. If $\gamma(\pi\times\tau,\psi)=\gamma(\pi^\prime\times\tau,\psi)$ for all irreducible generic representations $\tau$ of $\GL_{n}$ for any $n\leq l-1$, then
$\B_{\pi^c,\psi}(t_{n}(a)   \tilde{w}_{n})=\B_{\pi'{}^c,\psi}(t_{n}(a)   \tilde{w}_{n})$ for any $a\in\GL_{n}.$
\end{cor}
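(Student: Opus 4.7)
The plan is to deduce the corollary by transferring the Bessel function equalities established in Theorems \ref{GL_n} and \ref{GL_{l-1}} from $\pi, \pi'$ directly to $\pi^c, \pi'{}^c$, using the explicit formula relating $\B_{\pi^c,\psi}$ to $\B_{\pi,\psi}$ from the definition preceding Proposition \ref{Prop Besselconj}.

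First, since the hypothesis on gamma factors is precisely the one required in Theorems \ref{GL_n} and \ref{GL_{l-1}}, I apply these theorems directly to obtain $\B_{\pi,\psi}(t_n(a)\tilde{w}_n) = \B_{\pi',\psi}(t_n(a)\tilde{w}_n)$ for every $n \leq l-1$ and every $a \in \GL_n$. It will therefore suffice to establish
$$
\B_{\pi^c,\psi}(t_n(a)\tilde{w}_n) = \B_{\pi,\psi}(t_n(a)\tilde{w}_n),
$$
together with the corresponding identity for $\pi'$, since combining these with the equality above yields the corollary. Using $\B_{\pi^c,\psi}(g) = \B_{\pi,\psi}(c\tilde{t}^{-1}g\tilde{t}c)$, this reduces to proving the matrix identity
$$
c\tilde{t}^{-1}\, t_n(a)\tilde{w}_n\, \tilde{t}c = t_n(a)\tilde{w}_n.
$$

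To verify the identity, I would observe that $c$ and $\tilde{t}$ are both diagonal and have their only non-identity entries (the $\pm 1$'s) at positions $l$ and $l+1$. Since $n \leq l-1$, the matrix $t_n(a) = \mathrm{diag}(a, I_{2(l-n)}, a^*)$ has identity blocks at positions $l$ and $l+1$, so both $c$ and $\tilde{t}$ commute with $t_n(a)$, allowing these factors to be moved past $t_n(a)$ freely. For $\tilde{w}_n$, the anti-diagonal $I_n$ blocks occupy rows/columns $1,\dots,n$ and $2l-n+1,\dots,2l$, which avoid positions $l$ and $l+1$ when $n \leq l-1$; meanwhile, the only entries of $\tilde{w}_n$ at positions $l, l+1$ form the diagonal $2\times 2$ block $\mathrm{diag}(-1,1)^n$, which is diagonal and hence commutes with the $2\times 2$ restrictions $\mathrm{diag}(1,-1)$ of $c$ and $\mathrm{diag}(-1,-1)$ of $\tilde{t}$. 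Thus $c\tilde{w}_n c = \tilde{w}_n$ and $\tilde{t}^{-1}\tilde{w}_n\tilde{t} = \tilde{w}_n$, from which the claimed matrix identity follows immediately.

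Chaining the equalities then gives
$$
\B_{\pi^c,\psi}(t_n(a)\tilde{w}_n) = \B_{\pi,\psi}(t_n(a)\tilde{w}_n) = \B_{\pi',\psi}(t_n(a)\tilde{w}_n) = \B_{\pi'{}^c,\psi}(t_n(a)\tilde{w}_n),
$$
which is exactly the desired conclusion. The only nontrivial step in the argument is the matrix identity above, and even that reduces to careful bookkeeping about where the non-identity entries of $c$, $\tilde{t}$, and $\tilde{w}_n$ sit; the key structural point is that for $n \leq l-1$ the ``non-split'' middle positions $l, l+1$ of $\SO_{2l}$ are contained entirely within identity or diagonal blocks of all three matrices, so conjugation by $c$ and $\tilde{t}$ acts trivially.
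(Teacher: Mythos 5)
Your proof is correct and follows essentially the same route as the paper: both first invoke Theorems \ref{GL_n} and \ref{GL_{l-1}} to get $\B_{\pi,\psi}(t_n(a)\tilde{w}_n)=\B_{\pi',\psi}(t_n(a)\tilde{w}_n)$, and then use the defining relation $\B_{\pi^c,\psi}(g)=\B_{\pi,\psi}(c\tilde t^{-1}g\tilde t c)$ together with the observation that conjugation by $\tilde{t}c$ (equivalently $c\tilde t^{-1}$) fixes $t_n(a)\tilde{w}_n$. The only difference is that you spell out the matrix-level verification, whereas the paper records the same conjugation computation as a one-line manipulation.
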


\begin{proof}
By Theorems \ref{GL_n} and \ref{GL_{l-1}}, we have
$\B_{\pi,\psi}(t_{n}(a) \tilde{w}_{n})=\B_{\pi^\prime,\psi}(t_{n}(a)  \tilde{w}_{n})$ for any $a\in\GL_{n}$  and $n\leq l-1.$ Also, 
\begin{align*}
    \B_{\pi,\psi}(t_{n}(a)   \tilde{w}_{n})
=\B_{\pi,\psi}(c\tilde{t}\inv \tilde{t} ct_{n}(a)   \tilde{w}_{n} c\tilde{t}\inv \tilde{t} c)
&=\B_{\pi^c,\psi}( \tilde{t} ct_{n}(a)  \tilde{w}_{n} c\tilde{t}\inv) \\
&=\B_{\pi^c,\psi}( t_{n}(a)  \tilde{w}_{n}).
\end{align*}
This proves the corollary.
 \end{proof}

The following corollary shows the equivalence of $\gamma$-factors of conjugate representations for $n\leq l-1$. This is not necessary to show the converse theorem, but when later paired with its analogue for $n=l$, Corollary \ref{conj l gamma}, it shows that the $\gamma$-factor cannot distinguish between a representation and its conjugate. In the setting of local fields, this is already known (\cite{Art13, JL14}).

\begin{cor}\label{conj n gamma}
Let $\pi$ be an irreducible cuspidal $\psi$-generic representation of $\SO_{2l}$. Then we have $\gamma(\pi\times\tau,\psi)=\gamma(\pi^c\times\tau,\psi)$ for all irreducible generic representations $\tau$ of $\GL_{n}$ for any $n\leq l-1$.
\end{cor}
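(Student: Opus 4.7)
The plan is to revisit the zeta-integral computations from the proofs of Propositions \ref{GLnnonzero}, \ref{GL_{l-1}nonzero} and Theorems \ref{GL_n}, \ref{GL_{l-1}} with the Whittaker function $\B_{\pi,\psi}$ replaced by $\B_{\pi^c,\psi}$, and then invoke the unconditional identity $\B_{\pi,\psi}(t_n(a)\tilde{w}_n)=\B_{\pi^c,\psi}(t_n(a)\tilde{w}_n)$ that was already exhibited inside the proof of Corollary \ref{cGL_n}. Since $\pi^c$ is again irreducible cuspidal and $\psi$-generic (cf.\ Proposition \ref{Prop Besselconj}), the defining identity of the $\gamma$-factor from Proposition \ref{gammafactor} applies verbatim, and it suffices to exhibit a single pair $(W,f)$ for which $\Psi(\B_{\pi,\psi},f)\neq 0$ and $\Psi(\B_{\pi,\psi},M(\tau,\psi^{-1})f)=\Psi(\B_{\pi^c,\psi},M(\tau,\psi^{-1})f)$.

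Concretely, for $v\in\tau$ a Whittaker vector, I would take $W=\B_{\pi,\psi}$ (and separately $W=\B_{\pi^c,\psi}$) and $f=f_v$, the specific section built before Proposition \ref{GLnnonzero}. The computations from Propositions \ref{GLnnonzero}/\ref{GL_{l-1}nonzero} use nothing about $\pi$ beyond it being irreducible cuspidal $\psi$-generic, so they yield
$$\Psi(\B_{\pi,\psi},f_v)=W_v(I_n)=\Psi(\B_{\pi^c,\psi},f_v)\neq 0.$$
Likewise, the computations in the proofs of Theorems \ref{GL_n} and \ref{GL_{l-1}} give
$$\Psi(\B_{\pi,\psi},\tilde{f}_v)=|V_n||R^{l,n}|\sum_{a\in U_{\GL_n}\setminus\GL_n}\B_{\pi,\psi}(t_n(a)\tilde{w}_n)W_v^*(a),$$
with $\tilde{f}_v=M(\tau,\psi^{-1})f_v$, and identically for $\B_{\pi^c,\psi}$ in place of $\B_{\pi,\psi}$.

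The key input is now the equality $\B_{\pi,\psi}(t_n(a)\tilde{w}_n)=\B_{\pi^c,\psi}(t_n(a)\tilde{w}_n)$, which is the unconditional calculation appearing in the proof of Corollary \ref{cGL_n} and which relies only on the observation that $c$ and $\tilde{t}$ commute with $t_n(a)\tilde{w}_n$ when $n\leq l-1$. Plugging this into the two expressions above gives $\Psi(\B_{\pi,\psi},\tilde{f}_v)=\Psi(\B_{\pi^c,\psi},\tilde{f}_v)$, so combining with Proposition \ref{gammafactor} yields
$$\gamma(\pi\times\tau,\psi)W_v(I_n)=\gamma(\pi^c\times\tau,\psi)W_v(I_n),$$
and division by the nonzero scalar $W_v(I_n)$ finishes the argument. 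No substantial obstacle is anticipated: the corollary essentially repackages the identities already extracted in Sections \ref{section Twists by GLn} and \ref{section Twists by GL l-1}, and the only point requiring care is to verify that every step in the earlier zeta-integral calculations applies equally well to $\pi^c$, which follows because those steps depend only on the hypothesis that the representation is irreducible cuspidal $\psi$-generic.
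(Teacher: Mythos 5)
Your proposal is correct and matches the paper's proof essentially step for step: take the section $f_v$ built from a Whittaker vector, observe that the computations of Propositions \ref{GLnnonzero}, \ref{GL_{l-1}nonzero} and the proofs of Theorems \ref{GL_n}, \ref{GL_{l-1}} apply verbatim to $\pi^c$ (since $\pi^c$ is again irreducible cuspidal $\psi$-generic), and then plug in the unconditional identity $\B_{\pi,\psi}(t_n(a)\tilde{w}_n)=\B_{\pi^c,\psi}(t_n(a)\tilde{w}_n)$, which holds because $c\tilde{t}^{-1}$ commutes with $t_n(a)\tilde{w}_n$ for $n\leq l-1$. Dividing by the nonzero $W_v(I_n)$ gives the result, exactly as in the paper.
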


\begin{proof}
We abuse notation and consider the zeta integrals simultaneously for each $n\leq l-1.$ By Propositions \ref{GLnnonzero} and \ref{GL_{l-1}nonzero} we can choose $v\in\tau$ such that $\Psi(\B_{\pi,\psi},f_v)=\Psi(\B_{\pi^c,\psi},f_v)\neq 0.$ By the proofs of Theorems \ref{GL_n} and \ref{GL_{l-1}},
$$\Psi(\B_{\pi,\psi},\tilde{f}_v)=|V_{n}||R^{l,n}|\sum_{a\in U_{\GL_{n}}\setminus \GL_{n} } \B_{\pi,\psi}(t_{n}(a)\tilde{w}_{n} ) W_v^*(a).$$ Since for any $n\leq l-1,$ we have
\begin{align*}
    \B_{\pi,\psi}(t_{n}(a)   \tilde{w}_{n})
&=\B_{\pi,\psi}(c\tilde{t}\inv \tilde{t} ct_{n}(a)   \tilde{w}_{n} c\tilde{t}\inv \tilde{t} c)\\
&=\B_{\pi^c,\psi}( \tilde{t} ct_{n}(a)   \tilde{w}_{n} c\tilde{t}\inv)\\
&=\B_{\pi^c,\psi}( t_{n}(a)   \tilde{w}_{n}),
\end{align*}
it follows that $\Psi(\B_{\pi,\psi},\tilde{f}_v)=\Psi(\B_{\pi^c,\psi},\tilde{f}_v).$ Finally, since $\Psi(\B_{\pi,\psi},f_v)=\Psi(\B_{\pi^c,\psi},f_v)$ is nonzero, we have $\gamma(\pi\times\tau,\psi)=\gamma(\pi^c\times\tau,\psi)$ for all irreducible generic representations $\tau$ of $\GL_{n}$ for any $n\leq l-1$. This concludes the proof of the corollary.
 \end{proof}

\section{Twists by $\GL_{l}$}\label{section Twists by GLl}

In this section, we consider the twists by $\GL_l.$ These twists are significantly different from the previous twists. The embedding of $\SO_{2l}$ into $\SO_{2l+1}$ plays a crucial role in these arguments. For reference, we explicate the embedding here. Recall we embed $\SO_{2l}$ into $\SO_{2l+1}$ via
$$
g:=\left(\begin{matrix}
A & B \\
C & D
\end{matrix}\right)
\mapsto
M^{-1}
\left(\begin{matrix}
A & & B \\
 & 1 & \\
C & & D  
\end{matrix}\right)
M
$$
where $A, B, C,$ and $D$ are $l\times l$ matrices and
$$
M=\mathrm{diag}(I_{l-1},\left(\begin{matrix}
    0 & 1 & 0 \\
    \frac{1}{2} & 0 & \frac{1}{2\gamma} \\
    \frac{1}{2} & 0 & \frac{-1}{2\gamma} 
    \end{matrix}\right), I_{l-1}).
$$
For brevity, we set 
$$
\tilde{M}:=\left(\begin{matrix}
    0 & 1 & 0 \\
    \frac{1}{2} & 0 & \frac{1}{2\gamma} \\
    \frac{1}{2} & 0 & \frac{-1}{2\gamma} 
    \end{matrix}\right).
$$
We can further decompose the matrices $A,B,C,D$ as follows:
\begin{equation*}
A=\left(\begin{matrix}
A_{1,1} & A_{1,2} \\
A_{2,1} & A_{2,2}
\end{matrix}\right),   B=\left(\begin{matrix}
B_{1,1} & B_{1,2} \\
B_{2,1} & B_{2,2}
\end{matrix}\right), C=\left(\begin{matrix}
C_{1,1} & C_{1,2} \\
C_{2,1} & C_{2,2}
\end{matrix}\right), D=\left(\begin{matrix}
D_{1,1} & D_{1,2} \\
D_{2,1} & D_{2,2}
\end{matrix}\right),
\end{equation*}
where $A_{1,1},B_{1,2},C_{2,1},D_{2,2}$ are $(l-1)\times(l-1)$ matrices, $A_{1,2},B_{1,1},C_{2,2},D_{2,1}$ are $(l-1)\times 1$ matrices, $A_{2,1},B_{2,2},C_{1,1},D_{1,2}$ are $1\times(l-1)$ matrices, and $A_{2,2},B_{2,1},C_{1,2},D_{1,1}$ are $1\times 1$ matrices. Then the embedding of
$g$ in $\SO_{2l+1}$ is given by
\begin{equation}\label{embedding}
    \left(\begin{matrix}
    A_{1,1} & \left(\begin{matrix}
      A_{1,2} & 0 & B_{1,1}  
    \end{matrix}\right)\tilde{M} & B_{1,2} \\
    \tilde{M}\inv\left(\begin{matrix}
      A_{2,1} \\
      0 \\
      C_{1,1}
    \end{matrix}\right) & \tilde{M}\inv\left(\begin{matrix}
        A_{2,2} & 0 & B_{2,1} \\
        0 & 1 & 0 \\
        C_{1,2} & 0 & D_{1,1}
    \end{matrix}\right)\tilde{M} & \tilde{M}\inv\left(\begin{matrix}
        B_{2,2} \\
        0 \\
        D_{1,2}
    \end{matrix}\right) \\
    C_{2,1} & \left(\begin{matrix}
      C_{2,2} & 0 & D_{2,1}  
    \end{matrix}\right)\tilde{M} & D_{2,2} 
    \end{matrix}\right)
\end{equation}
Furthermore, we have
\begin{equation}\label{embedding details}
\begin{split}
    \left(\begin{matrix}
      A_{1,2} & 0 & B_{1,1}  \end{matrix}\right)\tilde{M}&=\left(\begin{matrix}
      \frac{B_{1,1}}{2} & A_{1,2} & \frac{-B_{1,1}}{2\gamma}  
    \end{matrix}\right), \\ \left(\begin{matrix}
      C_{2,2} & 0 & D_{2,1}  \end{matrix}\right)\tilde{M}&=\left(\begin{matrix}
      \frac{D_{2,1}}{2} & C_{2,2} & \frac{-D_{2,1}}{2\gamma}  
    \end{matrix}\right), \\
    \tilde{M}\inv\left(\begin{matrix}
      A_{2,1} \\
      0 \\
      C_{1,1}
    \end{matrix}\right)&=\left(\begin{matrix}
      C_{1,1} \\
      A_{2,1} \\
      -\gamma C_{1,1}
    \end{matrix}\right), \\ \tilde{M}\inv\left(\begin{matrix}
      B_{2,2} \\
      0 \\
      D_{1,2}
    \end{matrix}\right)&=\left(\begin{matrix}
      D_{1,2} \\
      B_{2,2} \\
      -\gamma D_{1,2}
    \end{matrix}\right), \\
    \tilde{M}\inv\left(\begin{matrix}
        A_{2,2} & 0 & B_{2,1} \\
        0 & 1 & 0 \\
        C_{1,2} & 0 & D_{1,1}
\end{matrix}\right)\tilde{M}&=\left(\begin{matrix}
        \frac{D_{1,1}+1}{2} & C_{1,2} & \frac{-D_{1,1}+1}{2\gamma} \\
        \frac{B_{2,1}}{2} & A_{2,2} & \frac{-B_{2,1}}{2\gamma} \\
        \frac{\gamma(1-D_{1,1})}{2} & -\gamma C_{1,2} & \frac{D_{1,1}+1}{2}
    \end{matrix}\right).
    \end{split}
\end{equation}

Let $\tau$ be an irreducible $\psi^{-1}$-generic representation of $\GL_l$, $v\in\tau$ be a fixed vector, and define $\xi_v\in I(\tau)$ by supp$(\xi_v)=L_{l} V_{l}= Q_{l}$ and 
$$
\xi_v(l_{l}(a)u)=\tau(a)v,$$ for any $a\in\GL_{l}, u\in V_{l}.
$
Let $f_v=f_{\xi_v}\in I(\tau,\psi^{-1}).$ That is $f_v(g,a)=\Lambda_\tau(\tau(a)\xi_v(g))$, where $\Lambda_\tau\in \mathrm{Hom}_{U_{GL_{l}}}(\tau,\psi^{-1}), a\in\GL_{l},$ and  $g\in\SO_{2l+1}$. Let $W_v(a)=\Lambda_\tau(\tau(a)v).$ The following proposition shows that the zeta integrals are again nonzero.

\begin{prop}\label{GL_lnonzero}
Suppose $\B_{\pi,\psi}$ be the normalized Bessel function of $\pi$, an irreducible cuspidal $\psi$-generic representation of $\SO_{2l}$. Then, $\Psi(\B_{\pi,\psi},f_v)= W_v(w_{l,l})$. Moreover, we may choose $v\in\tau$ such that $\Psi(\B_{\pi,\psi},f_v)\neq 0.$
\end{prop}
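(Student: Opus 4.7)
The proof parallels those of Propositions \ref{GLnnonzero} and \ref{GL_{l-1}nonzero}, adapted to the more elaborate embedding $\SO_{2l} \hookrightarrow \SO_{2l+1}$ and the presence of the factor $w_{l,l}$. The first step is to unfold the zeta integral using the support of $f_v(\cdot, I_l)$, which is contained in the Siegel parabolic $Q_l = L_l V_l$ of $\SO_{2l+1}$. Since $w_{l,l} = l_l(\gamma I_l) \in L_l \subset Q_l$, the condition $f_v(w_{l,l} g, I_l) \neq 0$ is equivalent to the embedded image of $g \in \SO_{2l}$ lying in $Q_l$.

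Next, using the explicit embedding formulas \eqref{embedding} and \eqref{embedding details}, I would translate the condition ``embedded $g \in Q_l$'' into block conditions on $g = \left(\begin{smallmatrix} A & B \\ C & D \end{smallmatrix}\right)$, namely $C = 0$, $A_{2,1} = 0$, $A_{2,2} = 1$, $B_{2,1} = 0$, $D_{1,1} = 1$, and $D_{2,1} = 0$. Together with the defining relations of $\SO_{2l}$, this forces $g$ to lie in the subgroup of $P_{l-1} = M_{l-1} N_{l-1}$ whose $\SO_2$-Levi component equals $I_2$. Such $g$ decomposes uniquely as $t_{l-1}(a) u$ with $a \in \GL_{l-1}$ and $u \in N_{l-1} \subset U_{\SO_{2l}}$, so coset representatives for $U_{\SO_{2l}} \backslash (\SO_{2l} \cap Q_l)$ may be taken as $\{t_{l-1}(a) : a \in U_{\GL_{l-1}} \backslash \GL_{l-1}\}$. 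A direct calculation using the embedding yields $w_{l,l} \cdot t_{l-1}(a) = l_l(\gamma \, \mathrm{diag}(a, 1))$, whence $f_v(w_{l,l} t_{l-1}(a), I_l) = W_v(\gamma \, \mathrm{diag}(a, 1))$, and the integral reduces to
\[
\Psi(\B_{\pi,\psi}, f_v) = \sum_{a \in U_{\GL_{l-1}} \backslash \GL_{l-1}} \B_{\pi,\psi}(t_{l-1}(a)) \, W_v(\gamma \, \mathrm{diag}(a, 1)).
\]

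I would then invoke the Bessel vanishing results. By Proposition \ref{uppertriangular}, $\B_{\pi,\psi}(t_{l-1}(a)) = 0$ unless $a$ is upper triangular, reducing the sum to $a \in T_{\GL_{l-1}}$. For such $a$, the element $t_{l-1}(a) \in T_{\SO_{2l}}$ has middle $2 \times 2$ block equal to $I_2$ (so in the notation of Lemma \ref{center} we have $p = 1$ and $b = 0$); Lemma \ref{center} then forces $a = I_{l-1}$. The only surviving contribution is $\B_{\pi,\psi}(I_{2l}) \cdot W_v(\gamma I_l) = W_v(w_{l,l})$, which yields the claimed formula. For the nonvanishing assertion, I would take $v$ to be a Whittaker vector of $\tau$; then $W_v(I_l) \neq 0$, and since $\gamma I_l$ is central in $\GL_l$, we obtain $W_v(w_{l,l}) = \omega_\tau(\gamma) W_v(I_l) \neq 0$.

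The main obstacle is the second step: extracting the block-theoretic shape of $\SO_{2l} \cap Q_l$ from the embedding (particularly the conjugation by $M$) is noticeably more intricate than in the cases $n \leq l-1$, and care is required to verify that the reduction to a sum over $a \in U_{\GL_{l-1}} \backslash \GL_{l-1}$ is consistent with the compatibility between the Whittaker characters on $U_{\SO_{2l}}$ and on $U_{\GL_l}$ that is built into the zeta integral. Once the decomposition $g = t_{l-1}(a) u$ is in hand, the remaining steps mirror the proof of Proposition \ref{GL_{l-1}nonzero}, and the non-split torus is handled cleanly by Lemma \ref{center} precisely because the middle $2 \times 2$ block of $t_{l-1}(a)$ is always $I_2$.
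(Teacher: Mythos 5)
Your proof is correct and follows essentially the same route as the paper: unfold using the support of $f_v$, use the explicit embedding to see that the preimage of $Q_l$ is the subgroup of $P_{l-1}$ whose $\SO_2$-Levi part is $I_2$, reduce the sum to $a\in U_{\GL_{l-1}}\backslash\GL_{l-1}$, then apply Proposition \ref{uppertriangular} and Lemma \ref{center} to collapse to the identity term. You are a bit more explicit than the paper in spelling out the block conditions from \eqref{embedding}--\eqref{embedding details} and in computing $w_{l,l}\cdot t_{l-1}(a)=l_l(\gamma\,\mathrm{diag}(a,1))$; the paper states those facts more tersely as ``straightforward to check.'' The only genuine (small) departure is the nonvanishing argument at the end: the paper takes $v=\tau(w_{l,l}^{-1})\tilde v$ with $\tilde v$ a Whittaker vector, whereas you observe that $\gamma I_l$ is central in $\GL_l$ and hence $W_v(w_{l,l})=\omega_\tau(\gamma)W_v(I_l)\neq 0$ directly for a Whittaker vector $v$; both are valid and yours is arguably slightly cleaner. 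One caveat you should make explicit when writing this up carefully: the reduction to coset representatives $\{t_{l-1}(a)\}$ relies on the fact that $U_{\SO_{2l}}$ itself lies in the preimage $P'$ of $Q_l$, i.e.\ that $u_{l,l+1}=0$ for all $u\in U_{\SO_{2l}}$; this holds because the $(l,l+1)$ entry has weight zero for the split torus $S_{\SO_{2l}}$ and hence does not belong to any root subgroup, but it is a point worth flagging since it is special to the quasi-split form (in the split case the analogous entry is a genuine root position).
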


\begin{proof}
By definition.
$$
\Psi(\B_{\pi,\psi},f_v)=\sum_{g\in U_{\SO_{2l}}\setminus\SO_{2l}} \B_{\pi,\psi}(g)f_v(w_{l,l}g, I_{l}).
$$
By definition, the support of $f_v(\cdot, I_l)$ is $Q_l.$ It is straightforward to check that the preimage of $Q_l$ under the embedding of $\SO_{2l}$ into $\SO_{2l+1}$ is contained in the standard parabolic subgroup of $\SO_{2l}$ with Levi subgroup isomorphic to $\GL_{l-1}\times\SO_2$ (see Equations \ref{embedding} and \ref{embedding details}). Moreover, the preimage of $L_l$ consists of elements of the form $t_{l-1}(a)$ where $a\in\GL_{l-1}.$

Thus, $$
\Psi(\B_{\pi,\psi},f_v)=\sum_{a\in U_{\GL_{l-1}}\setminus\GL_{l-1}} \B_{\pi,\psi}(t_{l-1}(a))f_v(w_{l,l}l_{l-1}(a), I_{l}).
$$
By Proposition \ref{uppertriangular}, $\B_{\pi,\psi}(t_{l-1}(a))=0$ for any $a$ which isn't upper triangular. Thus, by Lemma \ref{center}, $\Psi(\B_{\pi,\psi},f_v)=W_v(w_{l,l}).$ If we let $\tilde{v}$ be a Whittaker vector for $\tau$ and $v=\tau(w_{l,l}\inv)\tilde{v},$ then $\Psi(\B_{\pi,\psi},f_v)=W_v(w_{l,l})=W_{\tilde{v}} (I_l)\neq 0.$ This proves the proposition.
 \end{proof}

The support of $\tilde{f}_v=M(\tau,\psi\inv)f_v$ lies in $Q_l w_l V_l.$ In order to compute the zeta integral, we need to find the preimage of this set inside the support of the Bessel functions. This is essentially the statement of the proposition below.

\begin{prop}\label{l embed}
Let $w\in\mathrm{B}(\SO_{2l})$ and $$t=\mathrm{diag}(t_1,\dots,t_{l-1},\left(\begin{matrix}
a & b\rho \\
b & a
\end{matrix}\right),t_{l-1}\inv,\dots,t_1\inv) \in T_{\SO_{2l}}.$$ Then $tw\in Q_l w_l V_l$ (via the embedding of $\SO_{2l}$ into $\SO_{2l+1}$) if and only if we have $w\in\mathrm{B}_{l-1}(\SO_{2l})$ and $a\neq 1$.
\end{prop}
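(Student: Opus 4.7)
The plan is to reduce the question to a concrete matrix-invertibility condition by using the Bruhat decomposition of $\SO_{2l+1}$ relative to $Q_l$. Specifically, $Q_l w_l V_l = Q_l w_l Q_l$ is the unique open double coset in $\SO_{2l+1}$, and an element $g \in \SO_{2l+1}$ lies in this cell if and only if the lower-left $l \times l$ block of $g$ (in the $(l,1,l)$ row/column decomposition associated with $Q_l$) is invertible. This is the standard characterization, which one obtains by inspecting the explicit form of a generic element $l_l(a) v_1 w_l v_2$, whose lower-left block equals $a^*$.

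Next, I would reduce from arbitrary $w \in \mathrm{B}(\SO_{2l})$ to $w = \tilde{w}_n$. By Corollary \ref{Besselpartnonsplit}, $w \in \mathrm{B}_n(\SO_{2l})$ for a unique $n \in \{0, 1, \ldots, l-1\}$, so $w = t_n(w') \tilde{w}_n$ for some $w' \in W(\GL_n)$. Writing $tw = t_n(w') \cdot (t'' \tilde{w}_n)$ where $t'' = t_n(w')^{-1} t t_n(w')$ has the same middle $2 \times 2$ block as $t$ (in particular, the same $a$-parameter), and noting via equations (\ref{embedding}) and (\ref{embedding details}) that $t_n(w')$ embeds into $\SO_{2l+1}$ as $\mathrm{diag}(w', I_{2l-2n+1}, w'^*) \in L_l \subset Q_l$, membership in $Q_l w_l V_l$ is preserved under left-multiplication by the embedded $t_n(w')$. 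Thus the question reduces to whether $t'' \tilde{w}_n$ lies in $Q_l w_l V_l$.

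Now I would use (\ref{embedding}) and (\ref{embedding details}) to extract the lower-left $l \times l$ block of the embedded $t'' \tilde{w}_n$. In terms of the sub-blocks of $g = t'' \tilde{w}_n \in \SO_{2l}$, this block takes the form
$$
\begin{pmatrix}
-\gamma C_{1,1} & \dfrac{\gamma(1 - D_{1,1})}{2} \\
C_{2,1} & \dfrac{D_{2,1}}{2}
\end{pmatrix}.
$$
Direct inspection of $\tilde{w}_n$ multiplied on the left by the torus $t''$ shows $C_{1,1} = 0$, $D_{2,1} = 0$, $D_{1,1} = a$, and $C_{2,1}$ is an $(l-1) \times (l-1)$ matrix with exactly $n$ nonzero entries, located at the positions $(l-n-1+i, i)$ for $i = 1, \ldots, n$, each a nonzero torus entry. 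So the block becomes the anti-block-diagonal matrix $\begin{pmatrix} 0 & \gamma(1-a)/2 \\ C_{2,1} & 0 \end{pmatrix}$.

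This anti-block-diagonal matrix is invertible precisely when $a \neq 1$ and $C_{2,1}$ has full rank $l-1$. Since $C_{2,1}$ has rank exactly $n$ (one nonzero entry in each of $n$ distinct rows and $n$ distinct columns), the full-rank condition forces $n = l-1$, i.e., $w \in \mathrm{B}_{l-1}(\SO_{2l})$. Combining the two conditions gives the claim. The main technical hurdle is the matrix bookkeeping in the third step---verifying the claimed form of $C_{1,1}, D_{1,1}, C_{2,1}, D_{2,1}$ directly from the explicit form of $\tilde{w}_n$---together with establishing the invertibility characterization of $Q_l w_l V_l$ in the first step.
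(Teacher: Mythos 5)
Your proposal is correct and follows essentially the same strategy as the paper: compute the embedded image of $tw$, use the fact that membership in the open cell $Q_l w_l V_l$ is equivalent to invertibility of the lower-left $l\times l$ block (which the paper establishes implicitly by writing out $l_l(a)n_1 w_l n_2$ and reading off $a^*$ in that corner), and then observe that this corner block for $tw$ is the anti-block-diagonal matrix forcing $n=l-1$ and $a\neq 1$. Your preliminary reduction of $w=t_n(w')\tilde w_n$ to $\tilde w_n$ by absorbing the embedded $t_n(w')\in L_l\subset Q_l$ is a mild streamlining of the paper's bookkeeping (the paper simply carries $w'$ along), but the underlying argument is the same.
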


\begin{proof}
We begin with a sketch of the proof. 
We first compute the embedding of the torus in coordinates and the embedding of the Weyl element $w$, then compute $Q_l w_l V_l$ in terms of block matrices. We see that the block matrix in the lower left corner must be invertible if $tw\in Q_l w_l V_l$ which gives us the condition we need. We proceed with the details of the proof.

Recall that the embedding sends $t$ to
$$
\mathrm{diag}(s,\left(\begin{matrix}
\frac{1}{2}(1+a) & b & \frac{1}{2\gamma}(1-a) \\
\gamma b & a & -b \\
\frac{\gamma}{2}(1-a) & -\gamma b & \frac{1}{2}(1+a)
\end{matrix}\right), s^*)\in\SO_{2l+1}
$$
where $s=\mathrm{diag}(t_1, t_2, \dots, t_{l-1})$. 
By Corollary \ref{Besselpartnonsplit}, there exists an integer $n$ with $1\leq n\leq l-1$ and $w'\in W(\GL_n)$ such that $w=t_n(w')\tilde{w}_n.$ The image of $t_n(w')$ in $\SO_{2l+1}$ is $\mathrm{diag}(w',I_{l-n-1},I_3,I_{l-n-1},(w')^*).$ The image of $\tilde{w}_n$ is 
$$
\left(\begin{matrix}
 & & & & I_n \\
 & I_{l-n-1} & & & \\
 & & \left(\begin{matrix}
     1 & & \\
     & -1 & \\
     & & 1
 \end{matrix}\right)^n & & \\
 & & & I_{l-n-1} & \\
 I_n & & & &
\end{matrix}\right).
$$
Altogether, we compute directly that the image of $tw$ in $\SO_{2l+1}$ is
\begin{equation}\label{eqn embed tw}
    \left(\begin{matrix}
 & & & & s_nw' \\
 & s_{l-1} & & & \\
 & & \left(\begin{matrix}
\frac{1}{2}(1+a) & b & \frac{1}{2\gamma}(1-a) \\
\gamma b & a & -b \\
\frac{\gamma}{2}(1-a) & -\gamma b & \frac{1}{2}(1+a)
\end{matrix}\right)\left(\begin{matrix}
     1 & & \\
     & -1 & \\
     & & 1
 \end{matrix}\right)^n & & \\
 & & & s_{l-1}^* & \\
 s_n^*(w')^* & & & &
\end{matrix}\right)
\end{equation}
where $s_n=\mathrm{diag}(t_1,\dots,t_n), s_{l-1}=\mathrm{diag}(t_{n+1},\dots,t_{l-1}), s_{l-1}^*=\mathrm{diag}(t_{l-1}\inv,\dots,t_{n+1}\inv),$ and $s_n^*=\mathrm{diag}(t_n\inv,\dots,t_1\inv).$

Next, we turn towards computing $Q_l w_l V_l.$ Let $a\in \GL_l$ and $n_1,n_2\in V_l$. We wish to compute $l_l(a)n_1 w_l n_2.$ Write 
$$
n_1=\left(\begin{matrix}
I_l & X & Y \\
    & 1 & X^\prime \\
    &   & I_l
\end{matrix}\right),
$$
where $X\in\mathbb{F}_q^l$, $Y\in\mathrm{Mat}_{l\times l}(\mathbb{F}_q),$ $X^\prime=-{}^t X J_l$, and $J_l Y + {}^t X^\prime\cdot X^\prime + {}^t Y J_l =0.$
Similarly, we write$$
n_2=\left(\begin{matrix}
I_l & M & N \\
    & 1 & M^\prime \\
    &   & I_l
\end{matrix}\right).
$$
Then, 
$$l_l(a)n_1 w_l n_2=
\left(\begin{matrix}
aY & aX+aYM & a+aX M^\prime+aYN \\
X^\prime    & 1+X^\prime M & M^\prime+X^\prime N \\
a^*    & a^*M  & a^*N
\end{matrix}\right).
$$

Thus, $tw=l_l(a)n_1 w_l n_2$ has a solution as long as we can find a suitable $a^*.$ From Equation \ref{eqn embed tw}, we see that this is only possible if $n=l-1.$ In this case, we require $a^*=\left(\begin{matrix}
& \frac{\gamma}{2}(1-a) \\
\mathrm{diag}(t_{l-1}\inv,\dots,t_{1}\inv)(w')^* &
\end{matrix}\right)$. Such $a^*$ exists as long as this matrix is invertible. Hence we must have $a\neq 1.$ This proves the proposition.
 \end{proof}

The following theorem shows that the twists by $\GL_l$ determine the Bessel functions, only up to conjugation however, on the part of the Bruhat cells that were missing in Theorem \ref{GL_{l-1}}.

\begin{thm}\label{GL_l}
Let $\pi$ and $\pi^\prime$ be irreducible cuspidal $\psi$-generic representations of $\SO_{2l}$ with the same central character. If $\gamma(\pi\times\tau,\psi)=\gamma(\pi^\prime\times\tau,\psi)$ for all irreducible generic representations $\tau$ of $\GL_l,$
then 
 $$(\B_{\pi,\psi}+\B_{\pi^c,\psi})(tw)=(\B_{\pi^\prime,\psi}+\B_{\pi'{}^c,\psi})(tw),$$ for any $t=\mathrm{diag}(t_1,\dots,t_{l-1},\left(\begin{matrix}
a & b\rho \\
b & a
\end{matrix}\right),t_{l-1}\inv,\dots,t_1\inv)\in T_{\SO_{2l}}$ with $a\neq 1$ and $w\in \mathrm{B}_{l-1}(\SO_{2l}).$  
\end{thm}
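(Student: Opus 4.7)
The proof follows the template of Theorems \ref{GL_n} and \ref{GL_{l-1}}, with one additional step needed to explain the appearance of $\B_{\pi^c,\psi}$ and $\B_{\pi'{}^c,\psi}$. First, by Proposition \ref{GL_lnonzero} together with the hypothesis on $\gamma$-factors, $\Psi(\B_{\pi,\psi},f_v) = \Psi(\B_{\pi',\psi},f_v)$ can be arranged to be nonzero, so the hypothesis upgrades to $\Psi(\B_{\pi,\psi},\tilde f_v) = \Psi(\B_{\pi',\psi},\tilde f_v)$. The remainder of the proof consists of unfolding this equality explicitly.

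For the unfolding, Lemma \ref{intertwinenon} forces $w_{l,l}g \in Q_l w_l V_l$ in the support of $\tilde f_v(w_{l,l}\,\cdot\,, I_l)$, and on this support $\tilde f_v(w_{l,l}g,I_l) = W_v^*(A)$, where $A \in \GL_l$ is the $\GL_l$-component in $w_{l,l}g = l_l(A)w_l v$. Combining Lemma \ref{lemma Bessel support vanish}, Proposition \ref{l embed}, and Proposition \ref{Besselprop}, the nonvanishing terms come from Bruhat cells $B_{\SO_{2l}} tw B_{\SO_{2l}}$ with $w \in \mathrm{B}_{l-1}(\SO_{2l})$ and $t \in T_{\SO_{2l}}$ having $a \neq 1$. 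Writing $g = u_1 tw u_2$ and pushing the unipotent parameters through by $\psi$-orthogonality (exactly as in the proofs of Theorems \ref{GL_n} and \ref{GL_{l-1}}), the zeta integral reduces to
$$\Psi(\B_{\pi,\psi},\tilde f_v) = C \sum_{t,w} \B_{\pi,\psi}(tw)\, W_v^*(A(t,w)),$$
for some nonzero constant $C$ independent of $\pi$, where the sum runs over the cells above and $A(t,w) \in U_{\GL_l}\backslash\GL_l$ is read off from Equations \ref{embedding} and \ref{embedding details} applied to the embedding of $tw$.

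The key new input, which produces the conjugate Bessel function, is a symmetry in the parametrization. Write $t^c := c\tilde t^{-1} t \tilde t c$ for the torus element obtained from $t$ by flipping $b \mapsto -b$. Direct computation with Equation \ref{embedding details} shows that the embedded images of $t$ and $t^c$ in $\SO_{2l+1}$ differ in the middle $3\times 3$ block by conjugation by $\mathrm{diag}(1,-1,1)$; this difference is linear in $b$ and can be absorbed into the $V_l$-factor of the Iwasawa-type decomposition $Q_l w_l V_l$, leaving the $\GL_l$-component unchanged modulo left $U_{\GL_l}$. Hence $A(t,w) = A(t^c,w)$ in $U_{\GL_l}\backslash\GL_l$. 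At the same time, a direct check using that the middle block $\bigl(\begin{smallmatrix}-1 & 0\\ 0 & 1\end{smallmatrix}\bigr)^{l-1}$ of $\tilde w_{l-1}$ commutes with both $\mathrm{diag}(-1,-1)$ and $\mathrm{diag}(1,-1)$ gives $c\tilde t^{-1} w \tilde t c = w$ for every $w \in \mathrm{B}_{l-1}(\SO_{2l})$. Combined with Proposition \ref{Prop Besselconj}, this yields $\B_{\pi^c,\psi}(tw) = \B_{\pi,\psi}(t^c w)$. Grouping the contributions of $(t,w)$ and $(t^c,w)$ in the sum produces
$$\Psi(\B_{\pi,\psi},\tilde f_v) = \tfrac{C}{2}\sum_{A \in U_{\GL_l}\backslash\GL_l} \bigl(\B_{\pi,\psi} + \B_{\pi^c,\psi}\bigr)(tw)\, W_v^*(A),$$
where $(t,w)$ are any representatives of the fiber over $A$. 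The same identity holds for $\pi'$, and subtracting gives a vanishing sum of the form required by Lemma \ref{Niennon}; applying that lemma yields the claimed pointwise equality.

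The technical heart of the argument, and the main obstacle, is the pairing step above: one must verify rigorously that the linear-in-$b$ contributions to the embedded middle block are fully absorbed by the $V_l$-factor while the $b^2$-contributions (which coincide for $t$ and $t^c$) determine $A$ up to $U_{\GL_l}$. This is the non-split analog, arising purely from the torus, of the Weyl-element pairing phenomenon exploited in the split case of \cite{HL22a}, and it is the reason why $\GL_l$-twists determine the Bessel functions only up to the involution $\pi \mapsto \pi^c$.
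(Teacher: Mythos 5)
Your proposal follows the same route as the paper's proof: nonvanishing of the untwisted zeta integral (Proposition \ref{GL_lnonzero}), reduction to comparing $\Psi(\cdot,\tilde f_v)$, identification of the support via Proposition \ref{l embed}, the observation that $tw\mapsto A$ is two-to-one with fibers $\{tw,\,ctcw\}$, the conversion $\B_{\pi,\psi}(ctcw)=\B_{\pi^c,\psi}(tw)$ via Proposition \ref{Prop Besselconj}, and the final appeal to Lemma \ref{Niennon}. This is exactly the paper's argument.

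Two points where the proposal is less tight than the paper. First, you flag the verification that $A(t,w)=A(t^c,w)$ as the unresolved ``technical heart,'' arguing that the linear-in-$b$ entries of the embedded middle $3\times 3$ block are absorbed by the $V_l$-factor. In fact this step is already closed by the proof of Proposition \ref{l embed}, which produces the explicit $\GL_l$-component
$$A=\left(\begin{matrix}& \tfrac{\gamma}{2}(1-a)\\ \mathrm{diag}(t_{l-1}^{-1},\dots,t_1^{-1})(w')^* &\end{matrix}\right)^{*},$$
visibly depending only on $a,t_1,\dots,t_{l-1},w'$ and not on $b$; there is nothing left to verify. Second, the proposal does not separate the case $a=-1$. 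There $b=0$, so $ctc=t$ and the fiber over $A$ is a singleton rather than a pair; the paper handles this by excluding $a=\pm1$ from $T_a=T_1\cup T_2$ and invoking the common central character. Your blanket formula $\Psi(\B_{\pi,\psi},\tilde f_v)=\tfrac{C}{2}\sum_A(\B_{\pi,\psi}+\B_{\pi^c,\psi})(tw)W_v^*(A)$ implicitly assigns the wrong multiplicity to the $a=-1$ fibers. This does not invalidate the conclusion (one can rescale those terms before applying Lemma \ref{Niennon}, since $U_{\GL_l}$-translation preserves the value of $a$), but the bookkeeping needs to be stated.
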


\begin{proof}
By Proposition \ref{GL_lnonzero}, we have $\Psi(\B_{\pi,\psi},f_v)=\Psi(\B_{\pi',\psi},f_v)=W_v(w_{l,l})$ and we can choose this to be nonzero. By assumption, $\gamma(\pi\times\tau,\psi)=\gamma(\pi^\prime\times\tau,\psi)$ for all irreducible generic representations $\tau$ of $\GL_l$, and so $\Psi(\B_{\pi,\psi},\tilde{f}_v)=\Psi(\B_{\pi',\psi},\tilde{f}_v).$
By definition,
$$
\Psi(\B_{\pi,\psi},\tilde{f}_v)=\sum_{g\in U_{\SO_{2l}}\setminus\SO_{2l}} \B_{\pi,\psi}(g)\tilde{f}_v(w_{l,l}g, I_{l}),
$$ and hence
$$
0=\sum_{g\in U_{\SO_{2l}}\setminus\SO_{2l}}( \B_{\pi,\psi}-\B_{\pi',\psi})(g)\tilde{f}_v(w_{l,l}g, I_{l}).
$$
Let $T_a$ denote the set of $t=\mathrm{diag}(t_1,\dots,t_{l-1},\left(\begin{matrix}
a & b\rho \\
b & a
\end{matrix}\right),t_{l-1}\inv,\dots,t_1\inv)\in T_{\SO_{2l}}$ such that $a\neq \pm1.$ 
The support of $\tilde{f}_v=M(\tau,\psi\inv)f_v$ lies in $Q_l w_l V_l.$ From the Bruhat decomposition and Propositions \ref{Besselprop} and \ref{l embed} and equality of the central characters (to handle the case that $a=-1$), we have
$$
0=\sum_{t\in T_a, w\in \mathrm{B}_{l-1}(\SO_{2l})}( \B_{\pi,\psi}-\B_{\pi',\psi})(tw)\tilde{f}_v(w_{l,l}tw, I_{l}).
$$

 For $ w\in \mathrm{B}_{l-1}(\SO_{2l})$, there exists $w'\in W(\GL_{l-1})$ such that $w=t_{l-1}(w')\tilde{w}_{l-1}.$ Then, for $t\in T_a$, it follows from the proof of Proposition \ref{l embed} that $w_{l,l}tw=l_l(\gamma A)w_lx$ where $A\in\GL_l$ and $x\in V_l$ with
 $$A=\left(\begin{matrix}
& \frac{\gamma}{2}(1-a) \\
\mathrm{diag}(t_{l-1}\inv,\dots,t_{1}\inv)(w')^* &
\end{matrix}\right)^*.$$ By Lemma \ref{intertwinenon}, $\tilde{f}_v(w_{l,l}tw, I_{l})=W^*_v(\gamma A).$

Note that the assignment $tw\mapsto A$ is a two-to-one map. Indeed, recall that for $$t=\mathrm{diag}(t_1,\dots,t_{l-1},\left(\begin{matrix}
a & b\rho \\
b & a
\end{matrix}\right),t_{l-1}\inv,\dots,t_1\inv)\in T_{\SO_{2l}},$$
we have
$$ctc=\mathrm{diag}(t_1,\dots,t_{l-1},\left(\begin{matrix}
a & -b\rho \\
-b & a
\end{matrix}\right),t_{l-1}\inv,\dots,t_1\inv)\in T_{\SO_{2l}}.$$
Since $t\in T_a,$ it follows that $b\neq 0$ and hence $t\neq ctc$ (in fact, $T_a$ is precisely the subset of $T_{\SO_{2l}}$ which is not fixed by conjugation by $c$). However, $ctc\in T_a$ both $tw$ and $ctcw$ result in the same $A.$ As a result, we cannot apply Lemma \ref{Niennon} directly yet. 

To remedy this, we partition $T_a$ into two sets $T_1$ and $T_2$ with the property that $t\in T_1$ if and only if $ctc\in T_2.$ Then
 \begin{align*}
     0=&\sum_{t\in T_a, w\in \mathrm{B}_{l-1}(\SO_{2l})}( \B_{\pi,\psi}-\B_{\pi',\psi})(tw)\tilde{f}_v(w_{l,l}tw, I_{l}) \\
     =&\sum_{t\in T_1, w\in \mathrm{B}_{l-1}(\SO_{2l})}( \B_{\pi,\psi}-\B_{\pi',\psi})(tw)\tilde{f}_v(w_{l,l}tw, I_{l}) \\
     +&\sum_{t\in T_2, w\in \mathrm{B}_{l-1}(\SO_{2l})}( \B_{\pi,\psi}-\B_{\pi',\psi})(tw)\tilde{f}_v(w_{l,l}tw, I_{l}) \\
     =&\sum_{t\in T_1, w\in \mathrm{B}_{l-1}(\SO_{2l})}( \B_{\pi,\psi}-\B_{\pi',\psi})(tw)\tilde{f}_v(w_{l,l}tw, I_{l}) \\
     +&\sum_{t\in T_1, w\in \mathrm{B}_{l-1}(\SO_{2l})}( \B_{\pi,\psi}-\B_{\pi',\psi})(ctcw)\tilde{f}_v(w_{l,l}ctcw, I_{l}).
 \end{align*}
 Recall we set $\tilde{t}=\mathrm{diag}(I_{l-1},-1,-1,I_{l-1}).$ Conjugation by $\tilde{t}$ fixes $T_1.$ Hence, by Proposition
 \ref{Prop Besselconj}, we have
 \begin{align*}
     &\sum_{t\in T_1, w\in \mathrm{B}_{l-1}(\SO_{2l})}( \B_{\pi,\psi}-\B_{\pi',\psi})(ctcw)\tilde{f}_v(w_{l,l}ctcw, I_{l}) \\
     =&\sum_{t\in T_1, w\in \mathrm{B}_{l-1}(\SO_{2l})}( \B_{\pi,\psi}-\B_{\pi',\psi})(\tilde{t}\inv ctcw\tilde{t})\tilde{f}_v(w_{l,l}ctcw, I_{l}) \\
     =&\sum_{t\in T_1, w\in \mathrm{B}_{l-1}(\SO_{2l})}( \B_{\pi,\psi}-\B_{\pi',\psi})(c\tilde{t}\inv t\tilde{t}cw)\tilde{f}_v(w_{l,l}ctcw, I_{l} \\
     =&\sum_{t\in T_1, w\in \mathrm{B}_{l-1}(\SO_{2l})}( \B_{\pi^c,\psi}-\B_{\pi'{}^c,\psi})(tw)\tilde{f}_v(w_{l,l}ctcw, I_{l}).
 \end{align*}
 Thus, we have
 \begin{align*}
     0=&\sum_{t\in T_1, w\in \mathrm{B}_{l-1}(\SO_{2l})}( \B_{\pi,\psi}-\B_{\pi',\psi})(tw)\tilde{f}_v(w_{l,l}tw, I_{l}) \\
     +&\sum_{t\in T_1, w\in \mathrm{B}_{l-1}(\SO_{2l})}( \B_{\pi,\psi}-\B_{\pi',\psi})(ctcw)\tilde{f}_v(w_{l,l}ctcw, I_{l}) \\
     =&\sum_{t\in T_1, w\in \mathrm{B}_{l-1}(\SO_{2l})}( \B_{\pi,\psi}-\B_{\pi',\psi})(tw)\tilde{f}_v(w_{l,l}tw, I_{l}) \\
     +&\sum_{t\in T_1, w\in \mathrm{B}_{l-1}(\SO_{2l})}( \B_{\pi^c,\psi}-\B_{\pi'{}^c,\psi})(tw)\tilde{f}_v(w_{l,l}ctcw, I_{l}).
 \end{align*}
 Note that $\tilde{f}_v(w_{l,l}tw, I_{l})=\tilde{f}_v(w_{l,l}ctcw, I_{l})=W^*_v(\gamma A).$ Thus,
  \begin{align*}
     0=&\sum_{t\in T_1, w\in \mathrm{B}_{l-1}(\SO_{2l})}( \B_{\pi,\psi}-\B_{\pi',\psi})(tw)\tilde{f}_v(w_{l,l}tw, I_{l}) \\
     +&\sum_{t\in T_1, w\in \mathrm{B}_{l-1}(\SO_{2l})}( \B_{\pi^c,\psi}-\B_{\pi'{}^c,\psi})(tw)\tilde{f}_v(w_{l,l}ctcw, I_{l}) \\
     =&\sum_{t\in T_1, w\in \mathrm{B}_{l-1}(\SO_{2l})}( \B_{\pi,\psi}-\B_{\pi',\psi}+\B_{\pi^c,\psi}-\B_{\pi'{}^c,\psi})(tw)\tilde{f}_v(w_{l,l}tw, I_{l}).
 \end{align*}
 Recall that
 \begin{align*}
     A&=\left(\begin{matrix}
& \frac{\gamma}{2}(1-a) \\
\mathrm{diag}(t_{l-1}\inv,\dots,t_{1}\inv)(w')^* &
\end{matrix}\right)^* \\
&=\left(\mathrm{diag}(\frac{\gamma}{2}(1-a),t_{l-1}\inv,\dots,t_{1}\inv)\left(\begin{matrix}
    &  1 \\
    (w')^* &
\end{matrix}\right)\right)^*.
 \end{align*}
Let $X'$ be the subset of $\GL_l$ defined by 
the image of the map $tw\mapsto A$ and set $X:=\gamma X'.$ The map $tw\mapsto \gamma A$ gives a bijection from $T_1$ to $X,$ which denote by $\xi.$ We define a function $f:X\rightarrow\mathbb{C}$ by $f(x)=(\B_{\pi,\psi}-\B_{\pi',\psi}+\B_{\pi^c,\psi}-\B_{\pi'{}^c,\psi})(\xi\inv( x)).$

Then
\begin{align*}
     0=&\sum_{t\in T_1, w\in \mathrm{B}_{l-1}(\SO_{2l})}( \B_{\pi,\psi}-\B_{\pi',\psi}+\B_{\pi^c,\psi}-\B_{\pi'{}^c,\psi})(tw)\tilde{f}_v(w_{l,l}tw, I_{l}) \\
     = &\sum_{x\in X} f(x)W_v^*( x).
 \end{align*}
Finally, from Lemma \ref{Niennon}, we have $f(x)=0$ for any $x\in X.$ Therefore,
$$
(\B_{\pi,\psi}-\B_{\pi',\psi}+\B_{\pi^c,\psi}-\B_{\pi'{}^c,\psi})(tw)=0,
$$
for any $t\in T_1$ and $w\in \mathrm{B}_{l-1}(\SO_{2l}).$ Conjugation by $c$ fixes $\mathrm{B}_{l-1}(\SO_{2l})$ and sends $T_1$ to $T_2$. Also $$(\B_{\pi,\psi}-\B_{\pi',\psi}+\B_{\pi^c,\psi}-\B_{\pi'{}^c,\psi})(tw)=(\B_{\pi,\psi}-\B_{\pi',\psi}+\B_{\pi^c,\psi}-\B_{\pi'{}^c,\psi})(ctcw).$$
Thus we obtain
$$(\B_{\pi,\psi}-\B_{\pi',\psi}+\B_{\pi^c,\psi}-\B_{\pi'{}^c,\psi})(tw)=0,
$$
for any $t\in T_2$ and $w\in \mathrm{B}_{l-1}(\SO_{2l}).$ Since $T_a=T_1\cup T_2,$ we have proven the theorem.
\end{proof}

The following corollary, when combined with Corollary \ref{conj n gamma}, shows that $\gamma$-factor is unable to distinguish between a representation and its conjugate. That is, $\gamma(\pi\times\tau,\psi)=\gamma(\pi^c\times\tau,\psi)$ for all irreducible generic representations $\tau$ of $\GL_{n}$ with $n\leq l$.

\begin{cor}\label{conj l gamma}
Let $\pi$ be an irreducible cuspidal $\psi$-generic representation of $\SO_{2l}$. Then we have $\gamma(\pi\times\tau,\psi)=\gamma(\pi^c\times\tau,\psi)$ for all irreducible generic representations $\tau$ of $\GL_{l}$.
\end{cor}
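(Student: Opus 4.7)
The plan is to adapt the argument of Corollary \ref{conj n gamma} to the $\GL_l$ twist, reusing the zeta integral calculations performed in Theorem \ref{GL_l}. Fix an irreducible $\psi^{-1}$-generic representation $\tau$ of $\GL_l$ and, as in \S\ref{section Twists by GLl}, let $f_v \in I(\tau,\psi^{-1})$ be built from a vector $v \in \tau$. Since $\pi^c$ is again an irreducible cuspidal $\psi$-generic representation of $\SO_{2l}$ with the same central character as $\pi$ (its center acts through $\omega_\pi$ because $czc = z$ for central $z$), Proposition \ref{GL_lnonzero} applies verbatim to both $\pi$ and $\pi^c$ and yields
$$
\Psi(\B_{\pi,\psi}, f_v) \;=\; \Psi(\B_{\pi^c,\psi}, f_v) \;=\; W_v(w_{l,l}),
$$
and I choose $v$ so that this common value is nonzero. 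By the defining functional equation of Proposition \ref{gammafactor}, it then suffices to prove $\Psi(\B_{\pi,\psi}, \tilde f_v) = \Psi(\B_{\pi^c,\psi}, \tilde f_v)$.

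For this latter equality, I reuse the reduction in the proof of Theorem \ref{GL_l}: combining Proposition \ref{Besselprop}, Proposition \ref{l embed}, the Bruhat decomposition, and equality of central characters (again needed to rule out the slice $a = -1$), the integral $\Psi(\B_{\pi,\psi}, \tilde f_v)$ collapses to
$$
\Psi(\B_{\pi,\psi}, \tilde f_v) \;=\; \sum_{t \in T_a,\, w \in \mathrm{B}_{l-1}(\SO_{2l})} \B_{\pi,\psi}(tw)\, \tilde f_v(w_{l,l} tw, I_l),
$$
and the same reduction holds with $\pi$ replaced by $\pi^c$. The key identity available from the proof of Theorem \ref{GL_l} is $\B_{\pi^c,\psi}(tw) = \B_{\pi,\psi}(ctcw)$ for $t \in T_a$ and $w \in \mathrm{B}_{l-1}(\SO_{2l})$; this rests on Proposition \ref{Prop Besselconj} together with the facts that $\tilde t$ is central enough to commute with every torus element and with each $w \in \mathrm{B}_{l-1}(\SO_{2l})$, and that $cwc = w$ for such $w$. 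Combined with the two-to-one observation $\tilde f_v(w_{l,l} tw, I_l) = \tilde f_v(w_{l,l} ctcw, I_l)$ used in Theorem \ref{GL_l}, the change of variable $t \mapsto ctc$ (a bijection of $T_a$) converts the sum for $\Psi(\B_{\pi^c,\psi}, \tilde f_v)$ into the sum for $\Psi(\B_{\pi,\psi}, \tilde f_v)$, proving the desired equality.

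The substance of the argument is essentially already built into Theorem \ref{GL_l}: there, the nontrivial point was that the $\GL_l$-twist is only sensitive to the combination $\B_{\pi,\psi} + \B_{\pi^c,\psi}$, and this same symmetry, applied with $\pi' = \pi^c$, forces the two sides of the functional equation to transform identically. The only technical point to watch, as in Theorem \ref{GL_l}, is the careful bookkeeping of how the element $\tilde t$ and the outer element $c$ interact with the Bruhat data on the non-split slice $T_a$ of the torus, together with the handling of $a = -1$ via central characters; no genuinely new input beyond the toolkit developed for Theorem \ref{GL_l} is needed.
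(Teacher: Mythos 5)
Your overall strategy matches the paper's: compute $\Psi(\cdot,f_v)$ via Proposition \ref{GL_lnonzero}, then show $\Psi(\B_{\pi,\psi},\tilde f_v)=\Psi(\B_{\pi^c,\psi},\tilde f_v)$ by exploiting the $c$-symmetry — the change-of-variable formulation you use is the same idea the paper phrases as conjugating the whole sum by $c$.

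There is, however, a wrong intermediate step. The displayed collapse $\Psi(\B_{\pi,\psi},\tilde f_v)=\sum_{t\in T_a,\,w\in\mathrm{B}_{l-1}(\SO_{2l})}\B_{\pi,\psi}(tw)\,\tilde f_v(w_{l,l}tw,I_l)$ is false as stated. Proposition \ref{l embed} only restricts the support of $\tilde f_v(w_{l,l}\,\cdot\,,I_l)$ to cells with $w\in\mathrm{B}_{l-1}(\SO_{2l})$ and $a\neq 1$, which still contains the slice $a=-1$ (equivalently $b=0$), exactly the locus that separates $T_a$ from $\{a\neq 1\}$. Equality of central characters does not remove that slice from an \emph{individual} integral: for $a=-1$ one has $tw=(-I_{2l})t'w$ with $t'$ in the split torus, and $\B_{\pi,\psi}(tw)=\omega_\pi(-I_{2l})\B_{\pi,\psi}(t'w)$, which has no reason to vanish. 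The central-character argument in Theorem \ref{GL_l} works only because it is applied to a \emph{difference} of Bessel functions, and there the split part of that cell is additionally controlled by $\GL_{l-1}$-twist input; correspondingly, the paper's proof of this corollary first invokes Corollary \ref{conj n gamma} and Theorems \ref{GL_n} and \ref{GL_{l-1}} (with $\pi'=\pi^c$) before playing the $c$-symmetry. Your write-up omits that input, so the displayed reduction is unjustified.

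The gap is, fortunately, easy to close with the very mechanism you propose: the bijection $t\mapsto ctc$ preserves the full slice $\{a\neq 1\}$ and fixes the $b=0$ locus pointwise, and the identities $\B_{\pi^c,\psi}(tw)=\B_{\pi,\psi}(ctcw)$ and $\tilde f_v(w_{l,l}ctcw,I_l)=\tilde f_v(w_{l,l}tw,I_l)$ hold on all of $\{a\neq 1\}$. So if you state the reduction over $\{a\neq 1\}$ (rather than $T_a$) and run the change of variable there, the $a=-1$ terms simply map to themselves and cancel in the comparison, and your argument is correct — in fact it becomes slightly more self-contained than the paper's, which routes through the lower-rank twists via Corollary \ref{conj n gamma}.
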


\begin{proof}
By Corollary \ref{conj n gamma}, $\gamma(\pi\times\tau,\psi)=\gamma(\pi^c\times\tau,\psi)$ for all irreducible generic representations $\tau$ of $\GL_{n}$ with $n\leq l-1$. So, by Theorems \ref{GL_n} and \ref{GL_{l-1}}, 
\begin{align*}
&\quad\Psi(\B_{\pi,\psi},\tilde{f}_v)-\Psi(\B_{\pi^c,\psi},\tilde{f}_v) \\ &=\sum_{t\in T_a, w\in \mathrm{B}_{l-1}(\SO_{2l}), u\in U_{\SO_{2l}}}
(\B_{\pi,\psi}-\B_{\pi^c,\psi})(twu)\tilde{f}_v(w_{l,l}twu, I_l).
\end{align*}
Conjugation by $c$ map $T_a,$ $B_{l-1}(\SO_{2l}),$ and $U_{\SO_{2l}}$ to themselves and sends $\B_{\pi,\psi}$ to $\B_{\pi,\psi}$ by Proposition \ref{Prop Besselconj}. It also fixes $\tilde{f}_v(w_{l,l}twu, I_l)$. Hence 
\begin{align*}
&\sum_{t\in T_a, w\in \mathrm{B}_{l-1}(\SO_{2l}), u\in U_{\SO_{2l}}}
(\B_{\pi,\psi}-\B_{\pi^c,\psi})(twu)\tilde{f}_v(w_{l,l}twu, I_l) \\
=&\sum_{t\in T_a, w\in \mathrm{B}_{l-1}(\SO_{2l}), u\in U_{\SO_{2l}}}
(\B_{\pi^c,\psi}-\B_{\pi^c,\psi})(twu)\tilde{f}_v(w_{l,l}twu, I_l) \\
=&0.
\end{align*}
By Proposition \ref{GL_lnonzero}, we may choose a nonzero $v\in\tau$ such that $\Psi(\B_{\pi,\psi},f_v)=\Psi(\B_{\pi^c,\psi},f_v)=W_v(w_{l,l})\neq 0.$ Thus, we have $\gamma(\pi\times\tau,\psi)=\gamma(\pi^c\times\tau,\psi)$. This proves the corollary.
 \end{proof}

\section{The converse theorem}\label{section The converse theorem}

In this section, we prove the converse theorem. First, we combine the results of the previous sections to obtain the following theorem.

\begin{thm}\label{Bessels Equal}
Let $\pi$ and $\pi^\prime$ be irreducible cuspidal $\psi$-generic representations of  $\SO_{2l}$ with the same central character. If $$\gamma(\pi\times\tau,\psi)=\gamma(\pi^\prime\times\tau,\psi),$$
for all irreducible generic representations $\tau$ of $\GL_n$ with $n\leq l,$
then we have that $$(\B_{\pi,\psi}+\B_{\pi^c,\psi})(g)=(\B_{\pi^\prime,\psi}+\B_{\pi'{}^c,\psi})(g)$$  for any $g\in\SO_{2l}(\mathbb{F}_q)$.
\end{thm}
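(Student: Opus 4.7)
The plan is to verify the stated equality Bruhat cell by Bruhat cell, using the partition of the Bessel support in Corollary \ref{Besselpartnonsplit}. By Lemma \ref{lemma Bessel support vanish}, all four Bessel functions vanish outside $\bigcup_{w\in\mathrm{B}(\SO_{2l})} B_{\SO_{2l}} w B_{\SO_{2l}}$, so the identity is trivial there. On each Bruhat cell, Proposition \ref{Besselprop} (together with the fact that $wt = t'w$ for some $t' \in T_{\SO_{2l}}$) reduces the question to verifying the equality on elements of the form $tw$ with $t \in T_{\SO_{2l}}$ and $w \in \mathrm{B}(\SO_{2l})$.

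For $w \in \mathrm{B}_n(\SO_{2l})$ with $1 \leq n \leq l-2$, Proposition \ref{prop support on B_n} forces any $t$ giving a nonzero Bessel value to have the form $t = \mathrm{diag}(t_1, \ldots, t_n, \varepsilon I_{2(l-n)}, t_n^{-1}, \ldots, t_1^{-1})$ for some $\varepsilon = \pm 1$, with $b = 0$. Writing $w = t_n(w') \tilde{w}_n$ and $s = \mathrm{diag}(t_1, \ldots, t_n) \in \GL_n$, one has $t = t_n(s)$ when $\varepsilon = 1$ and $t = (-I_{2l}) \cdot t_n(-s)$ when $\varepsilon = -1$. In both cases $tw = z \cdot t_n(\varepsilon s w') \tilde{w}_n$ for some central $z \in \{\pm I_{2l}\}$. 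Theorem \ref{GL_n} and Corollary \ref{cGL_n} give $\B_{\pi,\psi}(t_n(\varepsilon s w') \tilde{w}_n) = \B_{\pi',\psi}(t_n(\varepsilon s w') \tilde{w}_n)$ and the analogous identity for $\B_{\pi^c,\psi}, \B_{\pi'{}^c,\psi}$; pulling $z$ out through the central characters (which agree by hypothesis) yields the required equality summand by summand. The case $n = 0$ is handled directly by Lemma \ref{center} and the central character hypothesis.

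For $w \in \mathrm{B}_{l-1}(\SO_{2l})$, we split on the $(l,l)$-entry $a$ of $t$. If $a = 1$, then $a^2 - b^2\rho = 1$ forces $b = 0$, so $t = t_{l-1}(s)$ lies in the split part $S_{\SO_{2l}}$; Theorem \ref{GL_{l-1}} together with Corollary \ref{cGL_n} delivers the pointwise equalities $\B_{\pi,\psi}(tw) = \B_{\pi',\psi}(tw)$ and $\B_{\pi^c,\psi}(tw) = \B_{\pi'{}^c,\psi}(tw)$, which sum to the required identity. If $a \neq 1$, Theorem \ref{GL_l} directly supplies the sum equality on $tw$. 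Combining these cases over the partition of $\mathrm{B}(\SO_{2l})$ finishes the proof.

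The only genuinely delicate parts are the $\varepsilon = -1$ subcase for $n \leq l-2$ and the $a \neq 1$ subcase for $n = l-1$. The former is reduced to the $\varepsilon = 1$ case by the factorization $t = (-I_{2l}) \cdot t_n(-s)$ together with the equality of central characters; the latter is exactly what Theorem \ref{GL_l} was designed to handle. It is worth noting why the theorem can only state the equality of sums rather than of individual Bessel functions: in the proof of Theorem \ref{GL_l}, the assignment $tw \mapsto A$ collapses the pair $\{t, ctc\}$ to a single element of $\GL_l$, and this two-to-one coupling between $\pi$ and $\pi^c$ persists in the conclusion here.
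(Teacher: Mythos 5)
Your proposal is correct and follows essentially the same route as the paper's proof: reduce to the Bessel support via Lemma \ref{lemma Bessel support vanish} and Proposition \ref{Besselprop}, partition into cells $\mathrm{B}_n(\SO_{2l})$ via Corollary \ref{Besselpartnonsplit}, and then invoke Lemma \ref{center} ($n=0$), Proposition \ref{prop support on B_n} with Theorem \ref{GL_n} and Corollary \ref{cGL_n} ($1\leq n\leq l-2$), and Theorems \ref{GL_{l-1}} and \ref{GL_l} with Corollary \ref{cGL_n} ($n=l-1$), using equality of central characters to pull out central factors throughout. You simply spell out in more detail the reductions the paper states tersely — in particular the factorization $t = \varepsilon I_{2l}\cdot t_n(\varepsilon s)$ when $\varepsilon=\pm1$, the absorption of $t$ into $t_n(\cdot)$ against the Weyl piece $t_n(w')$, and the $a=1$ versus $a\neq1$ dichotomy at $n=l-1$ — all of which are correct.
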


\begin{proof}
By the Bruhat decomposition, we may assume $g=u_1 t w u_2\in B_{\SO_{2l}}wB_{\SO_{2l}}$ for some $w\in W(\SO_{2l}).$ By Proposition \ref{Besselprop} and the definition of the Bessel support, it is enough to show that 
\begin{equation}\label{eqn Bessel conj equal}
    (\B_{\pi,\psi}+\B_{\pi^c,\psi})(tw)=(\B_{\pi^\prime,\psi}+\B_{\pi'{}^c,\psi})(tw)
\end{equation}
for any $t\in T_{\SO_{2l}}$ and $w\in \mathrm{B}(\SO_{2l}).$ By Proposition \ref{Besselpartnonsplit}, we only need to show Equation \eqref{eqn Bessel conj equal} for $w \in \mathrm{B}_n(\SO_{2l})$ for $n\leq l-1.$
Since $\mathrm{B}_0(\SO_{2l})=\{I_{2l}\},$ we have $(\B_{\pi,\psi}+\B_{\pi^c,\psi})(t)=(\B_{\pi^\prime,\psi}+\B_{\pi'{}^c,\psi})(t)$ for any $t\in T_{\SO_{2l}}$ by Lemma \ref{center} and equality of the central characters. Equation \eqref{eqn Bessel conj equal} for $1\leq n\leq l-2$ follows from the equality of the central characters, Proposition \ref{prop support on B_n}, Theorem \ref{GL_n}, and Corollary \ref{cGL_n}. Equation \eqref{eqn Bessel conj equal} for $n=l-1$ follows from the equality of the central characters, Theorems \ref{GL_{l-1}} and \ref{GL_l}, and Corollary \ref{cGL_n}.
 \end{proof}

 We now prove the converse theorem. The proof is essentially the same as in the split case since we have established Theorem \ref{Bessels Equal}. However, we include it for completeness.

\begin{thm}[The Converse Theorem for $\SO_{2l}$]\label{converse thm}
Let $\pi$ and $\pi^\prime$ be irreducible cuspidal $\psi$-generic representations of  $\SO_{2l}$ with the same central character. If $$\gamma(\pi\times\tau,\psi)=\gamma(\pi^\prime\times\tau,\psi),$$ 
for all irreducible generic representations $\tau$ of $\GL_n(\mathbb{F}_q)$ with $n\leq l,$
then $\pi\cong\pi'$ or $\pi\cong \pi'{}^c.$
\end{thm}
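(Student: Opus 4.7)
The plan is to deduce the theorem from Theorem \ref{Bessels Equal} by invoking Schur orthogonality for Whittaker models. By Theorem \ref{Bessels Equal}, the hypothesis on $\gamma$-factors yields
$$
\B_{\pi,\psi}+\B_{\pi^c,\psi}=\B_{\pi',\psi}+\B_{\pi'{}^c,\psi}
$$
as functions on $\SO_{2l}(\mathbb{F}_q)$, so the remaining task is purely representation-theoretic: to deduce from this identity that $\pi\cong\pi'$ or $\pi\cong\pi'{}^c$.

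The key input is the standard orthogonality of normalized Bessel functions. For any two irreducible cuspidal $\psi$-generic representations $\sigma_1,\sigma_2$ of $\SO_{2l}$, the pairing
$$
\langle \B_{\sigma_1,\psi}, \B_{\sigma_2,\psi}\rangle := \sum_{g\in U_{\SO_{2l}}\backslash\SO_{2l}} \B_{\sigma_1,\psi}(g)\overline{\B_{\sigma_2,\psi}(g)}
$$
is strictly positive when $\sigma_1\cong\sigma_2$ and vanishes otherwise, because the distinct Whittaker models embed as orthogonal isotypic subspaces of $\mathrm{Ind}_{U_{\SO_{2l}}}^{\SO_{2l}}\psi$. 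Moreover, since the normalized Bessel function is canonically attached to the isomorphism class of its representation, $\sigma\cong\sigma^c$ implies $\B_{\sigma,\psi}=\B_{\sigma^c,\psi}$.

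Assume, for contradiction, that $\pi\not\cong\pi'$ and $\pi\not\cong\pi'{}^c$. Since $c^2=I_{2l}$, the operation $\sigma\mapsto\sigma^c$ is an autoequivalence of the category of representations of $\SO_{2l}$, so the two non-isomorphisms above also give $\pi^c\not\cong\pi'{}^c$ and $\pi^c\not\cong\pi'$. I now pair both sides of the displayed identity with $\B_{\pi,\psi}+\B_{\pi^c,\psi}$. On the right, the pairing expands into the four terms $\langle\B_{\sigma,\psi},\B_{\sigma',\psi}\rangle$ with $(\sigma,\sigma')\in\{\pi,\pi^c\}\times\{\pi',\pi'{}^c\}$, each of which vanishes by the orthogonality above. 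On the left, the pairing equals $\|\B_{\pi,\psi}+\B_{\pi^c,\psi}\|^2$, which is strictly positive since $(\B_{\pi,\psi}+\B_{\pi^c,\psi})(I_{2l})=2$ by Proposition \ref{Besselprop}. This contradiction forces $\pi\cong\pi'$ or $\pi\cong\pi'{}^c$.

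The only technical point is verifying the orthogonality statement in the correct form for $\SO_{2l}(\mathbb{F}_q)$; this is standard and is essentially the same ingredient used in the analogous split-case proof in \cite{HL22a}. The genuinely new difficulties of the quasi-split case --- the outer automorphism and the non-split torus --- have already been fully absorbed into Theorem \ref{Bessels Equal}, so no further obstacle arises here.
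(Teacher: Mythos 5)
Your proof is correct, but it follows a somewhat different route from the paper. The paper works directly with the Whittaker models as subspaces of $\mathrm{Ind}_{U_{\SO_{2l}}}^{\SO_{2l}}\psi$: from the identity $\B_{\pi,\psi}+\B_{\pi^c,\psi}=\B_{\pi',\psi}+\B_{\pi'{}^c,\psi}$ it deduces that $W_\pi$ has nonzero intersection with the sum of the other Whittaker models, and then invokes irreducibility of $W_\pi$ together with multiplicity one to conclude that $W_\pi$ coincides with one of the summands; it handles the cases $\pi\cong\pi^c$ and $\pi\not\cong\pi^c$ separately. You instead package the same multiplicity-one input as Schur orthogonality of normalized Bessel functions under the $G$-invariant inner product on $\mathrm{Ind}_{U_{\SO_{2l}}}^{\SO_{2l}}\psi$, and derive a contradiction by pairing the Bessel identity against $\B_{\pi,\psi}+\B_{\pi^c,\psi}$. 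The two arguments rest on the same underlying fact (distinct irreducible generic representations occupy independent, indeed orthogonal, isotypic pieces of the induced space), but your inner-product formulation has the small advantage of avoiding the dichotomy on whether $\pi\cong\pi^c$: nonvanishing of the left-hand side follows immediately from $(\B_{\pi,\psi}+\B_{\pi^c,\psi})(I_{2l})=2$ regardless of the case, and vanishing of the right-hand side follows uniformly from the four non-isomorphisms you derive by applying the involution $c$. The one thing you assume, orthogonality of normalized Bessel functions for non-isomorphic cuspidal generic representations, is indeed standard (it is just orthogonality of distinct isotypic components for the $G$-invariant Hermitian form on $\mathrm{Ind}_{U}^{G}\psi$), and your usage is sound. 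So this is a valid and slightly more uniform presentation of the same final step.
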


\begin{proof}
By Theorem \ref{Bessels Equal}, $(\B_{\pi,\psi}+\B_{\pi^c,\psi})=(\B_{\pi^\prime,\psi}+\B_{\pi'{}^c,\psi})$ on all of $\SO_{2l}.$ We let $W_\pi, W_{\pi^c}, W_{\pi^\prime},$ and $W_{\pi'{}^c}$ be the Whittaker models of $\pi, \pi^c, \pi^\prime$, and $\pi'{}^c$ respectively.

First, suppose that $\pi$ is isomorphic to $\pi^c$. By uniqueness of Whittaker models, we have $\B_{\pi,\psi}+\B_{\pi^c,\psi}=2\B_{\pi,\psi} \in W_\pi$. Then, $2\B_{\pi,\psi}=\B_{\pi^\prime,\psi}+\B_{\pi'{}^c,\psi}$ and hence $W_\pi \cap (W_{\pi^\prime} \oplus W_{\pi'{}^c})\neq 0.$ Since $W_\pi$ is isomorphic to $\pi$ and is hence irreducible, we have that $W_\pi \cap (W_{\pi^\prime} \oplus W_{\pi'{}^c})=W_\pi$ and therefore, by uniqueness of Whittaker models, $W_\pi$ must be isomorphic to one of  $W_{\pi^\prime}$ or $W_{\pi'{}^c}$.
 
Second, suppose $\pi$ is not isomorphic to $\pi^c$. We have that $\B_{\pi,\psi}\in W_\pi$. We also have that $\B_{\pi^\prime,\psi}+\B_{\pi'{}^c,\psi}-\B_{\pi^c,\psi}\in W_{\pi^\prime} \oplus W_{\pi'{}^c}\oplus W_{\pi^c}$. Thus, $W_\pi \cap (W_{\pi^\prime} \oplus W_{\pi'{}^c}\oplus W_{\pi^c})\neq 0$ and hence the intersection is a nonzero subrepresentation of $W_\pi$. Since $W_\pi$ is isomorphic to $\pi$ and is hence irreducible, we have that $W_\pi \cap (W_{\pi^\prime} \oplus W_{\pi'{}^c}\oplus W_{\pi^c}) =W_\pi$ and therefore, by uniqueness of Whittaker models, $W_\pi$ must be isomorphic to one of  $W_{\pi^c}, W_{\pi^\prime},$ or $W_{\pi'{}^c}$. By assumption, $W_\pi$ is not isomorphic to $W_{\pi^c}$ and hence must be isomorphic to $W_{\pi^\prime}$ or $W_{\pi'{}^c}$. 

Therefore, in either case, we have shown the converse theorem.
 \end{proof}

\bibliographystyle{amsplain}
\bibliography{quasisplit_finite_even_orthog}

\end{document}